\documentclass{article}
\pdfoutput=1
\usepackage[english]{babel}
\usepackage[utf8]{inputenc}
\usepackage[T1]{fontenc}
\usepackage[a4paper,top=3cm,bottom=2cm,left=3cm,right=3cm,marginparwidth=1.75cm]{geometry}
\usepackage{amsmath} 
\usepackage{amssymb}
\usepackage{arydshln}
\usepackage{amsthm}
\usepackage{algorithm}
\usepackage{algpseudocode}
\usepackage[
    style=alphabetic, 
    backref=true, 
    maxnames=99, 
    maxalphanames=6
]{biblatex}
\usepackage{bm} 
\usepackage{booktabs} 
\usepackage{csquotes}
\usepackage{enumitem} 
\usepackage{etoolbox} 
\usepackage{eucal} 
\usepackage[scaled=.92]{helvet} 
\usepackage[scaled=0.97]{inconsolata}
\usepackage{mathtools} 
\usepackage{mleftright}\mleftright 
\usepackage{microtype} 
\usepackage{parskip}
\usepackage{tikz-cd} 
\usepackage{thmtools}
\usepackage{thm-restate} 
\usepackage{url} 
\usepackage{verbatim}  
\usepackage{xcolor}
\usepackage{ninecolors}
\NineColors{saturation=high}
\definecolor{citegreen}{HTML}{208054}
\definecolor{citeblue}{HTML}{0055cc}
\usepackage[most]{tcolorbox}
\usepackage{hyperref}
\usepackage[capitalize]{cleveref} 
\newtheorem{theorem}{Theorem}
\newtheorem{proposition}{Proposition}
\newtheorem{lemma}{Lemma}

\newtheorem{remark}{Remark}
\newtheorem{definition}{Definition}
\newtheorem{corollary}{Corollary}
\newtheorem{perspective}{Perspective}

\theoremstyle{definition}

\hypersetup{
    breaklinks=true,   
    pdfusetitle=true,  
    colorlinks=true, 
    linkcolor=green3, 
    citecolor=green3, 
    urlcolor=blue3 
}

\DefineBibliographyStrings{english}{%
  backrefpage = {page},
  backrefpages = {pages},
}

\newcommand{\defeq}{:=}

\newcommand{\norm}[1]{\left\lVert#1\right\rVert}

\newcommand{\normop}[1]{\left\lVert#1\right\rVert_{\textup{op}}}

\newcommand{\normsop}[1]{\lVert#1\rVert_{\textup{op}}}

\newcommand{\inprod}[2]{\left\langle#1, #2\right\rangle}
\newcommand{\inprods}[2]{\langle#1, #2\rangle}
\newcommand{\eps}{\varepsilon}
\renewcommand{\epsilon}{\varepsilon}

\newcommand{\sig}{\sigma}

\newcommand{\R}{\mathbb{R}}

\newcommand{\N}{\mathbb{N}}

\newcommand{\half}{\frac{1}{2}}

\newcommand{\E}{\mathbb{E}}

\newcommand{\Var}{\textup{Var}}

\newcommand{\Nor}{\mathcal{N}}

\newcommand{\id}{\mathbf{I}}

\newcommand{\dd}{\textup{d}}
\usepackage{xcolor}
\definecolor{burntorange}{rgb}{0.8, 0.33, 0.0}

\newcommand{\Par}[1]{\left(#1\right)}
\newcommand{\Brack}[1]{\left[#1\right]}
\newcommand{\Brace}[1]{\left\{#1\right\}}

\newcommand{\Cov}{\mathbf{Cov}}

\newcommand{\mc}[1]{\mathcal{#1}}
\newcommand{\mmc}{\mathbf{C}}

\newcommand{\0}{\mathbf{0}}

\newcommand{\va}{\mathbf{a}}
\newcommand{\vb}{\mathbf{b}}
\newcommand{\vc}{\mathbf{c}}

\newcommand{\vg}{\mathbf{g}}
\newcommand{\vh}{\mathbf{h}}

\newcommand{\vm}{\mathbf{m}}

\newcommand{\vp}{\mathbf{p}}

\newcommand{\vs}{\mathbf{s}}
\newcommand{\vt}{\mathbf{t}}
\newcommand{\vu}{\mathbf{u}}
\newcommand{\vv}{\mathbf{v}}
\newcommand{\vw}{\mathbf{w}}
\newcommand{\vx}{\mathbf{x}}
\newcommand{\vy}{\mathbf{y}}
\newcommand{\vz}{\mathbf{z}}
\newcommand{\vB}{\bm{B}}
\newcommand{\vW}{\bm{W}}
\newcommand{\backmu}{\mu^{\leftarrow}}
\newcommand{\sfQ}{\mathsf{Q}}

\newcommand{\calC}{\mathcal{C}}

\newcommand{\calF}{\mathcal{F}}

\newcommand{\calK}{\mathcal{K}}

\newcommand{\calP}{\mathcal{P}}

\newcommand{\calT}{\mathcal{T}}
\newcommand{\calU}{\mathcal{U}}


\newcommand{\backx}{x^{\leftarrow}}
\newcommand{\backvx}{\vx^{\leftarrow}}
\newcommand{\backpi}{\pi^{\leftarrow}}
\newcommand\mmid{\mathbin{\|}}

\DeclarePairedDelimiterX{\infdivx}[2]{(}{)}{%
  #1\;\delimsize\|\;#2%
}
\newcommand{\KL}{\mathrm{KL}\infdivx}
\newcommand{\D}{\mathrm{d}}
\newcommand{\tilt}{\mathcal{T}}

\newcommand{\msig}{\boldsymbol{\Sigma}}

\newcommand{\mbf}[1]{\mathbf{#1}}
\newcommand\bs[1]{\boldsymbol{#1}}
\newcommand\msf[1]{\mathsf{#1}}
\newcommand{\deq}{\coloneqq}
\newcommand{\sfP}{\mathsf{P}}
\newcommand{\sfL}{\mathsf{L}}
\newcommand{\sfGam}{\mathsf{\Gamma}}

\usepackage{mathrsfs}

\newcommand\msc[1]{\mathscr{#1}}
\newcommand\wt[1]{\widetilde{#1}}

\newcommand{\Ent}{\textup{Ent}}

\addbibresource{main.bib}

\title{Perspectives on Stochastic Localization}
\author{Bobby Shi\thanks{University of Texas at Austin, \texttt{bhshi@utexas.edu} } 
\and 
Kevin Tian\thanks{University of Texas at Austin, \texttt{kjtian@cs.utexas.edu}}
\and
Matthew S.\ Zhang\thanks{University of Toronto, \texttt{matthew.zhang@mail.utoronto.ca}}
}
\date{}

\begin{document}
\maketitle

\begin{abstract}
We survey different perspectives on the \emph{stochastic localization} process of Eldan, a powerful construction that has had many exciting recent applications in high-dimensional probability and algorithm design. Unlike prior surveys on this topic, our focus is on giving a self-contained presentation of all known alternative constructions of Eldan's stochastic localization, with an emphasis on connections between different constructions. Our hope is that by collecting these perspectives, some of which had primarily arisen within a particular community (e.g., probability theory, theoretical computer science, information theory, or machine learning), we can broaden the accessibility of stochastic localization, and ease its future use.
\end{abstract}

\tableofcontents
\newpage

\section{Introduction}\label{sec:intro}

Stochastic localization, an elegant stochastic process introduced by \cite{Eldan13}, has proven to be a particularly useful tool in the analysis of high-dimensional distributions, and in designing algorithms to sample from them. Use of this process has notably resulted in a series of gradual improvements to the estimate of the KLS constant \cite{Eldan13, LeeV17, Chen21, KlartagL22, JambulapatiLV22, Klartag23}, i.e., the smallest isoperimetric constant among all isotropic log-concave densities on $\R^d$ \cite{KannanLS95}. It has also enabled various other breakthroughs in probability theory and theoretical computer science \cite{EldanL14, Eldan18, Klartag18, LeeV18, Eldan20, EldanMZ20, Klartag2021Spectral, EldanS22, EldanKZ22, AlaouiMS22, AnariHLVXY23, Guan24, KlartagL24a}.

Among the many useful properties of stochastic localization, the most pertinent to us is the realization that the process is in fact \emph{equivalent} to various other constructions that have arisen from probability theory, theoretical computer science, information theory, and machine learning. This is not a new observation; many results have connected two or more equivalent forms of stochastic localization to obtain novel characterizations or tighter estimates for at least one of the forms~\cite{Lehec13, EldanL15, LeeST21, ChenCSW22, AlaouiM22, ChenE22, BentonBDD24, MikulincerS24, KlartagO25}.

This survey's goal is to present a relatively complete collection of different perspectives on stochastic localization, each equating the base process (cf.\ Perspective~\ref{perspective:tilt}) with another natural probabilistic object. While the proofs presented here are not new (except for providing some missing details), we believe this exposition benefits the community for at least the following reasons. 

\begin{enumerate}[label=(\arabic*)]
\item Although prior expositions of stochastic localization \cite{Klartag2021Spectral, Eldan22, Montanari23, KlartagL24b, Chewi25} have presented several forms of the process, none make it a goal to connect all existing perspectives. Our presentation includes all perspectives contained in these prior works, and our explicit focus is a streamlined presentation of their connections.
\item Several perspectives may be more familiar to a subcommunity interacting with stochastic localization in a particular way. Our presentation simplifies translation between different perspectives by clarifying the connections, which may enable further applications.
\item We make an effort to review the relevant background and keep our exposition self-contained, without being overly verbose. While some derivations are folklore to subcommunities where such calculations are routine, we believe there is value in providing explicit derivations for unfamiliar readers, particularly in an introductory survey.
\end{enumerate}

At this juncture, it is useful to introduce our first perspective on stochastic localization, based on its definition in \cite{LeeV17}, a small modification of its original definition in \cite{Eldan13}.\footnote{More generally, the stochastic localization process of \cite{Eldan13} allows specifying \emph{control matrices} in the dynamics, which affect the covariance of the Gaussian regularization in $\pi_t$. We recall this more general form in Appendix~\ref{app:anisotropic}, focusing on the isotropic variant throughout the main body for simplicity.}

\begin{tcolorbox}[colback=blue!10, colframe=blue!50!black, boxrule=0.5pt, arc=2mm]
\begin{perspective}\label{perspective:tilt}
Let $\pi_0 \in \calP(\R^d)$ be such that $\vm_0 \defeq \E_{\vx \sim \pi_0}[\vx]$ exists. Define a stochastic process $\{\vc_t \in \R^d\}_{t \ge 0}$  as follows, where $\{\vW_t \in \R^d\}_{t \ge 0}$ is a Wiener process:
\begin{equation}\label{eq:tilt_def}\tag{$\msf{SL}$-$\msf{I}$}
\begin{gathered}\vc_0 = \0_d,\quad \dd \vc_t = \vm_t \, \dd t + \dd \vW_t,\\
\text{ where } \vm_t \defeq \E_{\vx \sim \pi_t}[\vx],\; \pi_t(\vx) \propto \exp\Par{\inprod{\vc_t}{\vx} - \frac t 2 \norm{\vx}_2^2}\pi_0(\vx),\text{ for all } t \ge 0.\end{gathered}
\end{equation}
We call the (random) induced measures $\{\pi_t\}_{t \ge 0}$ the \emph{stochastic localization} of $\pi_0$.
\end{perspective}
\end{tcolorbox}

In other words, $\pi_t$ is an \emph{exponential tilt} of the ``Gaussian-regularized'' measure $\propto \exp(-\frac t 2 \norm{\vx}_2^2)\pi_0(\vx)$, by the log-linear function $\exp(\inprod{\vc_t}{\vx})$. The dynamics of the random tilt $\vc_t$ are governed by \eqref{eq:tilt_def}, which includes a bias towards the mean $\vm_t$ of the current measure $\pi_t$.

Why is Perspective~\ref{perspective:tilt} useful? For one thing, it replaces $\pi_0$ with a distribution over measures $\pi_t$, each of which regularizes $\pi_0$ by a randomly-shifted Gaussian. These Gaussians always have variance $\frac 1 t$ at time $t$, and hence as $t \to \infty$, the measure $\pi_t$ obtains strong concentration properties, potentially much stronger than those of $\pi_0$. For this reason, \eqref{eq:tilt_def} is termed a \emph{localization process} (Definition 3, \cite{ChenE22}), in that $\pi_t$ successively ``localizes'' towards a Dirac delta distribution.

In addition to always inducing a fixed amount of Gaussian regularization by time $t$,  \eqref{eq:tilt_def} satisfies a second important property: it is a \emph{measure-valued martingale}, i.e., $\E[\pi_t(\vx)] = \pi_0(\vx)$ pointwise over $\vx \in \R^d$, and for all $t \ge 0$. These two facts let us view the process $\pi_0 \to \pi_t$ as a random decomposition of $\pi_0$ into better-behaved components, whose properties are inherited by $\pi_0$ ``on average.'' Many applications of stochastic localization build on precisely this perspective.

We now outline the remaining perspectives found in this survey.

\begin{enumerate}[label=(\arabic*)]
    \item \textbf{Section~\ref{sec:measure}} formalizes the martingale property of  \eqref{eq:tilt_def}, using a dual perspective (observed in \cite{Eldan13}) of \eqref{eq:tilt_def} as a stochastic process on the measure $\pi_t$ itself, rather than the tilt $\vc_t$.
    \item \textbf{Section~\ref{sec:posterior}} presents an alternative information-theoretic perspective (due to \cite{AlaouiM22}) on the tilt dynamics \eqref{eq:tilt_def} as \emph{posterior sampling} from a noisy Gaussian channel.
    \item \textbf{Section~\ref{sec:diffusion}} presents a perspective that equates a time-changed variant of \eqref{eq:tilt_def} with \emph{denoising diffusion probabilistic models} \cite{HoJA20, SongSKKEP21}, a dominant paradigm in modern diffusion modeling. This connection was observed by \cite{Klartag2021Spectral, Montanari23}.
    \item \textbf{Section~\ref{sec:renormalization}} presents a perspective that uses a continuous renormalization procedure --- Gaussian integration and the Polchinski equation --- to decompose a measure in order to derive functional inequalities; \cite{BauerschmidtBD24} connects this explicitly to stochastic localization.
    \item \textbf{Section~\ref{sec:schrodinger}} presents a perspective via the classical (static) Schr\"odinger bridge problem, which gives another derivation of the Polchinski semigroup in Section~\ref{sec:renormalization}.
    \item \textbf{Section~\ref{sec:dynamic}} and \textbf{Section~\ref{sec:eot}} present two alternative perspectives on the static Schr\"odinger bridge problem, respectively connecting it to a dynamic reformulation based on Girsanov's theorem, and an entropy-regularized optimal transport problem.
\end{enumerate}

Each section begins by introducing a new probabilistic object of self-contained interest, along with the relevant background needed to understand it. It then shows how the newly-introduced object is equivalent to the process in Perspective~\ref{perspective:tilt}, after appropriate transformations. 

Finally, while this survey focuses on the stochastic localization process itself, a related topic of interest is \emph{algorithmic applications} of stochastic localization. There are natural sampling algorithms derived from stochastic localization, several of which have enabled improved runtimes for important applications in theoretical computer science, statistical physics, and machine learning \cite{LeeV18, LeeST21, ChenCSW22, ChenE22, EldanKZ22, AlaouiMS22, AnariHLVXY23, BentonBDD24}. To this end, in Appendix~\ref{app:rgo}, we provide an exposition on how applying the \emph{localization schemes} framework of \cite{ChenE22} to the process \eqref{eq:tilt_def} yields the \emph{restricted Gaussian dynamics} Markov chain \cite{LeeST21}. We also show how techniques developed in the main body provide a mixing time estimate for this Markov chain.

\begin{remark}[On solutions to \eqref{eq:tilt_def}] 
A sufficient condition for the existence and uniqueness of a solution to the process $\vc_t$ is uniform Lipschitz-continuity of $\vm_t$.  A straightforward calculation shows $\nabla_{\vc_t}\vm_t=\nabla_{\vc_t}\mathbb{E}_{\vx\sim \pi_t}[\vx]=\Cov(\pi_t)$; thus, it suffices that $\norm{\Cov(\pi_t)}_{\operatorname{op}}$ is uniformly bounded.  If $\pi_t$ is compactly supported then this is clear.  More generally, if $\pi_t$ satisfies a Poincar\'e inequality \eqref{eq:pi_def}, i.e., 
\[\operatorname{Var}_{\pi_t}[f]\leq \frac{1}{\alpha}\mathbb{E}_{\pi_t}\left[\norm{\nabla f}^2_2\right]\]
for suitable functions $f$, then plugging in linear functions $f(\vx)=\inprods{\vv}{\vx}$ we deduce that $\normsop{\Cov(\pi_t)} \le \frac 1 \alpha$.  If $\pi$ itself is $\alpha$-strongly log-concave, then the arguments of Section~\ref{sec:perspective-adjointheat} show that for all finite $t$, $\pi_t$ satisfies a (weakly) improved Poincar\'e inequality.  We caution that for more general usage, solution existence and uniqueness for \eqref{eq:tilt_def} do not necessarily hold, so the reader should carefully check in the case of their application before applying results in this survey.
\end{remark}

\subsection{Notation}

For $n \in \N$ we denote $[n] \defeq \{i \in \N \mid i \le n\}$. Vectors are denoted in upright boldface lowercase, and matrices are denoted in upright boldface uppercase, unless specified otherwise. We reserve use of italic boldface uppercase letters, e.g., $\{\vW_t\}_{t \ge 0}$, $\{\vB_t\}_{t \ge 0}$ for Wiener processes in $\R^d$. 
All probability measures are given as densities relative to the Lebesgue measure over $\R^d$, and all integrals are over $\R^d$ unless specified otherwise. We use $\calP(\R^d)$ to denote the set of probability measures over $\R^d$. When $f: \R^d \to \R_{\ge 0}$ is integrable, we use $\pi \propto f$ to mean that $\pi$ is the measure that equals $f$ up to a normalization constant $(\int f(\vx) \dd \vx)^{-1}$. We use $\Nor(\vm, \msig)$ to denote the multivariate normal distribution with specified mean and covariance, $\vm \in \R^d$ and $\msig \in \R^{d \times d}$. We use $\id_d$ to denote the identity matrix in $\R^d$, and $\0_d$ to denote the all-zeroes vector in $\R^d$. We use $\nabla$, $\nabla^2$, $\nabla \cdot$, $\Delta$ to denote the gradient, Hessian, divergence, and Laplacian respectively. When $f$ is a function depending on a variable $t$, we often use $\partial_t f$ as a shorthand for $\frac {\partial f} {\partial t}$.

In Sections~\ref{sec:posterior},~\ref{sec:schrodinger}, and~\ref{sec:eot}, we require tools specialized to \emph{path measures}, i.e., probability measures supported on continuous paths on $\R^d$ indexed by a time in $[0, t]$. We denote this support by $\calC([0, t] \times \R^d)$, and the space of associated path measures by $\calP([0, t] \times \R^d)$.  We always denote path measures with capital letters, e.g., $P \in \calP([0, t] \times \R^d)$ is supported on continuous paths $\vp_{[0, t]} \in \calC([0, t] \times \R^d)$. Hence, $\vp_s \in \R^d$ for any $s \in [0, t]$, and $P_s$ denotes the law of $\vp_s$.

\subsection{Preliminaries}\label{ssec:prelims}

We repeatedly use the following two standard facts in stochastic calculus.

\begin{lemma}[It\^o's lemma]\label{lem:ito}
	Let $f: \R^d \to \R$ be twice-differentiable, and suppose $\{\vx_t\}_{t \ge 0}$ follows the SDE $\dd \vx_t = \vm_t \, \dd t + \msig_t \, \dd \vW_t$ where $\{\vm_t, \msig_t\}_{t \ge 0}$ are adapted to the filtration generated by a Wiener process $\{\vW_t\}_{t \ge 0}$. Then $\{f(\vx_t)\}_{t \ge 0}$ is also a drift-diffusion process, following the SDE
	\begin{align*}\dd f(\vx_t) &= \inprod{\nabla f(\vx_t)}{\dd \vx_t} + \half\inprod{\nabla^2 f(\vx_t)}{\msig_t\msig_t^\top}\, \dd t   \\
		&= \Bigl(\inprod{\nabla f(\vx_t)}{\vm_t} + \half\inprod{\nabla^2 f(\vx_t)}{\msig_t\msig_t^\top}\Bigr) \,\dd t + \inprod{\nabla f(\vx_t)}{\msig_t \, \dd \vW_t}.\end{align*}
\end{lemma}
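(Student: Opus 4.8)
The plan is to prove this classical It\^o formula by a second-order Taylor expansion of $f$ along a refining sequence of partitions of the time interval, tracking which contributions survive in the limit. The organizing heuristic is the following: expanding $f(\vx_{t+\dd t}) - f(\vx_t)$ to second order in the increment $\dd\vx_t = \vm_t\,\dd t + \msig_t\,\dd\vW_t$, and then applying the formal multiplication rules $\dd\vW_t^{(i)}\dd\vW_t^{(j)} = \delta_{ij}\,\dd t$, $\dd t\,\dd\vW_t^{(i)} = 0$, $(\dd t)^2 = 0$, collapses the quadratic form $(\dd\vx_t)^\top\nabla^2 f(\vx_t)(\dd\vx_t)$ into $\inprod{\nabla^2 f(\vx_t)}{\msig_t\msig_t^\top}\,\dd t$. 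All the work is in justifying these rules as statements about $L^2$-limits of Riemann-type sums.

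Concretely, I would fix $T>0$ and a partition $0 = t_0 < t_1 < \cdots < t_N = T$ of mesh tending to zero, telescope $f(\vx_T) - f(\vx_0) = \sum_i\big(f(\vx_{t_{i+1}}) - f(\vx_{t_i})\big)$, and apply Taylor's theorem with remainder to each summand. This produces a first-order sum $\sum_i \inprod{\nabla f(\vx_{t_i})}{\Delta\vx_i}$ with $\Delta\vx_i \defeq \vx_{t_{i+1}} - \vx_{t_i}$, a second-order sum $\thalf\sum_i (\Delta\vx_i)^\top\nabla^2 f(\vx_{t_i})(\Delta\vx_i)$, and a remainder. The first-order sum converges --- by the definition of the It\^o integral for the $\msig_t\,\dd\vW_t$ part, and by ordinary Riemann-sum convergence for the finite-variation $\vm_t\,\dd t$ part --- to $\int_0^T\inprod{\nabla f(\vx_s)}{\vm_s}\,\dd s + \int_0^T\inprod{\nabla f(\vx_s)}{\msig_s\,\dd\vW_s}$, i.e., to $\int_0^T\inprod{\nabla f(\vx_s)}{\dd\vx_s}$.

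The heart of the argument is the second-order sum. Substituting $\Delta\vx_i = \vm_{t_i}\Delta t_i + \msig_{t_i}\Delta\vW_i$ and expanding the quadratic form gives three groups of terms. The drift--drift terms are $O\big(\sum_i(\Delta t_i)^2\big)$ and vanish with the mesh; the drift--diffusion cross terms form a sum of conditionally mean-zero increments whose $L^2$-norm, by orthogonality of martingale increments, is $O\big((\sum_i(\Delta t_i)^2\,\E[\normsf{\msig_{t_i}}^2])^{1/2}\big)$ and hence vanishes. For the diffusion--diffusion terms $\thalf\sum_i\inprod{\nabla^2 f(\vx_{t_i})}{\msig_{t_i}\Delta\vW_i\Delta\vW_i^\top\msig_{t_i}^\top}$, I would replace $\Delta\vW_i\Delta\vW_i^\top$ by its conditional expectation $\Delta t_i\id_d$: the discrepancy has conditional mean $\0_d$ and conditional second moment $O((\Delta t_i)^2)$, so the induced error is once more an $L^2$-negligible sum of orthogonal increments, while the surviving term $\thalf\sum_i\inprod{\nabla^2 f(\vx_{t_i})}{\msig_{t_i}\msig_{t_i}^\top}\,\Delta t_i$ is an ordinary Riemann sum converging to $\thalf\int_0^T\inprod{\nabla^2 f(\vx_s)}{\msig_s\msig_s^\top}\,\dd s$.

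The main obstacle is purely technical: making all of these $L^2$-convergences legitimate when $\nabla f$, $\nabla^2 f$, $\vm_t$, $\msig_t$ are only locally (rather than globally) bounded, and controlling the Taylor remainder. I would dispatch the first point in the standard way, via localizing stopping times such as $\tau_n \defeq \inf\{t : \norms{\vx_t}_2 \ge n \text{ or } \int_0^t\normsf{\msig_s}^2\,\dd s \ge n\}$: prove the identity for the stopped process $\vx_{t\wedge\tau_n}$, where every quantity above is bounded and the estimates apply verbatim, and then let $n\to\infty$ using $\tau_n\to\infty$ almost surely together with continuity in $t$ of both sides. The Taylor remainder is bounded by $\big(\sup_i\omega(\norms{\Delta\vx_i}_2)\big)\cdot\sum_i\norms{\Delta\vx_i}_2^2$, where $\omega$ is the modulus of continuity of $\nabla^2 f$ on the (almost surely bounded, by path continuity) range of $\vx_{[0,T]}$; since $\sum_i\norms{\Delta\vx_i}_2^2$ stays bounded while $\sup_i\norms{\Delta\vx_i}_2\to 0$ by uniform continuity of the path, this remainder vanishes, which completes the proof.
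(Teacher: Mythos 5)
The paper states It\^o's lemma without proof, listing it among ``standard facts in stochastic calculus,'' so there is no paper proof to compare against. Your sketch is the classical Taylor-expansion argument (as in \O ksendal, Le Gall, or Karatzas--Shreve), and it is structurally sound: telescope along a partition, Taylor-expand to second order, identify the first-order Riemann sums with the Lebesgue and It\^o integrals, collapse the quadratic form by replacing $\Delta\vW_i\Delta\vW_i^\top$ with its conditional expectation $\Delta t_i\id_d$ and dispatching the discrepancy by $L^2$-orthogonality of conditionally centered increments, bound the Taylor remainder via a modulus of continuity against the almost-surely finite quadratic variation, and localize with stopping times to remove integrability hypotheses. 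That is the right plan and the right decomposition.

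Two small caveats. First, your remainder bound $\bigl(\sup_i\omega(\norms{\Delta\vx_i}_2)\bigr)\sum_i\norms{\Delta\vx_i}_2^2$ implicitly uses that $\nabla^2 f$ is \emph{continuous} on the path's range, not merely that it exists, so the argument really proves the lemma for $f\in C^2$; this is the standard hypothesis, and the paper's phrase ``twice-differentiable'' is a mild understatement rather than a gap in your reasoning, but you should say so. Second, your stated $L^2$-estimate for the drift--diffusion cross terms has the wrong power of the mesh: each cross term $\Delta t_i\,\vm_{t_i}^\top\nabla^2 f(\vx_{t_i})\msig_{t_i}\Delta\vW_i$ is conditionally centered with conditional variance of order $(\Delta t_i)^3$ (the $(\Delta t_i)^2$ from the deterministic prefactor times the $\Delta t_i$ from $\mathrm{Var}(\Delta\vW_i)$), so the $L^2$-norm of their sum is $O\bigl((\sum_i(\Delta t_i)^3)^{1/2}\bigr)$, not $O\bigl((\sum_i(\Delta t_i)^2\,\E[\normsf{\msig_{t_i}}^2])^{1/2}\bigr)$ as written. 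Your weaker bound still vanishes with the mesh, so the conclusion is unaffected, but the exponent should be corrected. Beyond that the proof is correct and fills in a derivation the paper omits.
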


\begin{lemma}[Fokker-Planck equation]\label{lem:fokker_planck}
Suppose $\{\vx_t\}_{t \ge 0}$ follows the SDE $\dd \vx_t = \vm_t(\vx_t) \, \dd t + \msig_t(\vx_t)\,\dd \vW_t$ where $\{\vm_t, \msig_t\}_{t \ge 0}$ are adapted to the filtration generated by a Wiener process $\{\vW_t\}_{t \ge 0}$. Then letting $\pi_t$ denote the law of $\vx_t$, we have for all $\vx \in \R^d$ that
\[\partial_t \pi_t(\vx) = -\nabla \cdot\Par{\vm_t(\vx) \pi_t(\vx)} + \half \sum_{(i, j) \in [d] \times [d]} \partial_{\vx_i}\partial_{\vx_j}\Par{\msig_t(\vx)\msig_t(\vx)^\top \pi_t(\vx)}_{ij}.\]
\end{lemma}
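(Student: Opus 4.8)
The plan is to derive the equation in its weak (distributional) form by pairing against test functions, and then indicate the regularity under which it upgrades to the stated pointwise identity. Fix an arbitrary $f \in C_c^\infty(\R^d)$. Applying It\^o's lemma (Lemma~\ref{lem:ito}) to the process $f(\vx_t)$ and taking expectations, the It\^o integral term $\int_0^t \inprod{\nabla f(\vx_s)}{\msig_s(\vx_s)\,\dd\vW_s}$ has zero mean as a martingale, leaving
\[\frac{\dd}{\dd t}\,\E\Brack{f(\vx_t)} = \E\Brack{\inprod{\nabla f(\vx_t)}{\vm_t(\vx_t)} + \half\inprod{\nabla^2 f(\vx_t)}{\msig_t(\vx_t)\msig_t(\vx_t)^\top}}.\]
Rewriting both expectations as integrals against the law $\pi_t$, the left side becomes $\frac{\dd}{\dd t}\int f(\vx)\pi_t(\vx)\,\dd\vx$ and the right side becomes $\int \big(\inprod{\nabla f(\vx)}{\vm_t(\vx)} + \half\sum_{i,j}\partial_{\vx_i}\partial_{\vx_j}f(\vx)\,(\msig_t(\vx)\msig_t(\vx)^\top)_{ij}\big)\pi_t(\vx)\,\dd\vx$.

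Next I would move the time derivative inside the left-hand integral to get $\int f(\vx)\,\partial_t\pi_t(\vx)\,\dd\vx$, and integrate by parts on the right: once on the drift term, turning $\inprod{\nabla f}{\vm_t\pi_t}$ into $-f\,\nabla\cdot(\vm_t\pi_t)$, and twice on the diffusion term, turning $\partial_{\vx_i}\partial_{\vx_j}f\,(\msig_t\msig_t^\top\pi_t)_{ij}$ into $f\,\partial_{\vx_i}\partial_{\vx_j}(\msig_t\msig_t^\top\pi_t)_{ij}$. All boundary contributions vanish since $f$ has compact support. Collecting terms yields
\[\int f(\vx)\Brack{\partial_t\pi_t(\vx) + \nabla\cdot\Par{\vm_t(\vx)\pi_t(\vx)} - \half\sum_{i,j}\partial_{\vx_i}\partial_{\vx_j}\Par{\msig_t(\vx)\msig_t(\vx)^\top\pi_t(\vx)}_{ij}}\dd\vx = 0,\]
and since this holds for every $f \in C_c^\infty(\R^d)$, the bracketed quantity vanishes almost everywhere, which is the claim.

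The main obstacle is purely analytic rather than conceptual: differentiating under the integral sign on the left, and the two integration-by-parts steps on the right, presuppose enough integrability and weak differentiability of $\pi_t$, of $\vm_t\pi_t$, and of $\msig_t\msig_t^\top\pi_t$, together with growth control on the coefficients so that the relevant integrals converge and dominated convergence applies. Absent such hypotheses, the identity is properly read as an equality of distributions in $\vx$, i.e., the integrated form displayed above. Under the smoothness assumed in the statement (and, in particular, when $\msig_t\msig_t^\top$ is nondegenerate, so that parabolic regularity furnishes a smooth density), the weak identity promotes to the stated pointwise equation. Since this is a survey, I would simply flag these standard conditions and cite a standard SDE reference rather than carry out the estimates.
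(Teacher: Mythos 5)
The paper does not prove this lemma: it is stated in Section~\ref{ssec:prelims} as one of ``two standard facts in stochastic calculus'' and used throughout without derivation. So there is no paper proof to compare against; I can only assess your argument on its own terms.

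Your proof is the standard and correct derivation of the Kolmogorov forward equation: apply It\^o's lemma to $f(\vx_t)$ for $f \in C_c^\infty(\R^d)$, take expectations so the stochastic integral drops out, rewrite both sides as integrals against the law $\pi_t$ (here you implicitly use that $\vm_t$ and $\msig_t$ are deterministic functions of the current state rather than merely adapted processes, which is what the notation $\vm_t(\vx_t)$, $\msig_t(\vx_t)$ in the statement means and what makes $\vx_t$ Markovian), integrate by parts once on the drift and twice on the diffusion, and invoke the fundamental lemma of the calculus of variations. Your closing paragraph correctly flags the real content: the weak identity is what one actually proves, and promoting it to a pointwise PDE requires integrability and regularity hypotheses (e.g.\ nondegenerate $\msig_t\msig_t^\top$ plus parabolic regularity to guarantee a smooth density). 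Given that the paper treats this as background to be cited rather than proved, flagging those conditions and pointing to a standard reference is exactly the right call; there is nothing to fix.
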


In Sections~\ref{sec:renormalization} and~\ref{sec:schrodinger}, and Appendix~\ref{app:rgo}, we use tools from Markov semigroup theory. A (time-inhomogeneous) Markov semigroup $\{\sfP_{\sig,\tau}\}_{0 \le \sig \le \tau}$ is induced by a stochastic process $\{\vx_\tau\}_{\tau \ge 0}$ via the following definition: for a 
compactly supported test function $f: \R^d \to \R$, we let 
\[\sfP_{\sig,\tau} f(\vx) \defeq \E[f(\vx_\tau) \mid \vx_\sig = \vx].\]
Note that by iterating expectations, the semigroup property $\sfP_{\rho, \sig} \sfP_{\sig, \tau} = \sfP_{\rho, \tau}$ holds.\footnote{$\{\calP_{\sig, \tau}\}_{0 \le \sig \le \tau \le 1}$ is not a semigroup in the standard mathematical sense, as elements can only be composed if they share an index. We follow the terminology used by the literature on time-inhomogeneous stochastic processes.} 

The adjoint operator $\sfP_{\sig,\tau}^*$ is then interpreted in duality as $(\msf P_{\sigma, \tau}^*\delta_{\mathbf x})(f) = \sfP_{\sig,\tau} f(\vx)$. This means that applying $\sfP_{\sig, \tau}^*$ to a density over $\vx$ at time $\sig$ advances the density to that of $\vx$ at time $\tau$. 

We denote the \emph{infinitesimal generator} of the semigroup at time $\tau$ by $\sfL_\tau$, which operates as:
\begin{equation}\label{eq:generator_tau}\sfL_\tau f \defeq \lim_{\eta \searrow 0} \frac{\sfP_{\tau, \tau + \eta} f - f}{\eta}.\end{equation}

Finally, we define the \emph{carr\'e du champ} operator at time $\tau$ as:
\begin{equation}\label{eq:cdc}
\begin{aligned}
    \msf{\Gamma}_\tau(f, g)(\vx) 
    &\defeq \half \lim_{\eta \searrow 0} \frac{\E\Brack{(f(\vx_{\tau + \eta}) - f(\vx_\tau))(g(\vx_{\tau + \eta}) - g(\vx_\tau)) \mid \vx_\tau = \vx}}{\eta}\\
    &=\frac{1}{2}\Par{\sfL_\tau(fg)(\vx)-f(\vx)\sfL_\tau g(\vx)-g(\vx)\sfL_\tau f(\vx)}.
\end{aligned}
\end{equation}
\section{Measure-valued process}\label{sec:measure}

We next present a dual perspective on the process \eqref{eq:tilt_def}: rather than track how the random tilt $\vc_t$ evolves, we track the dynamics of its induced distribution $\pi_t(\vx) \propto \exp(\inprod{\vc_t}{\vx} - \frac t 2 \norm{\vx}_2^2) \pi_0(\vx)$.

\begin{tcolorbox}[colback=blue!10, colframe=blue!50!black, boxrule=0.5pt, arc=2mm]
\begin{perspective}\label{perspective:density}
Let $\pi_0\in \calP(\R^d)$ be such that $\vm_0 \defeq \E_{\vx \sim \pi_0}[\vx]$ exists. Define a measure-valued stochastic process $\{\pi_t\}_{t \ge 0}$ as follows, where $\{\vW_t\}_{t \ge 0}$ is a Wiener process:
\begin{equation}\label{eq:density_def}\tag{$\msf{SL}$-$\msf{II}$}
\begin{gathered}
    \dd \pi_t(\vx) = \inprod{\vx - \vm_t}{\dd \vW_t}\pi_t(\vx),\\\text{ pointwise over } \vx \in \R^d,\text{ where } \vm_t \defeq \E_{\vx \sim \pi_t}[\vx].
\end{gathered}
\end{equation}
\end{perspective}
\end{tcolorbox}

We first make a simple observation regarding correctness of the dynamics \eqref{eq:density_def}.

\begin{lemma}\label{lem:pit_density}
In \eqref{eq:density_def}, $\int \pi_t(\vx) \, \dd \vx = 1$ for all $t \ge 0$, and $\E[\pi_t(\vx)] = \pi_0(\vx)$ for all $\vx \in \R^d$ and $t \ge 0$.
\end{lemma}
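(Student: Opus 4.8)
The plan is to verify both claims directly from the SDE \eqref{eq:density_def} using It\^o's lemma (Lemma~\ref{lem:ito}), applied pointwise in $\vx$ with $t$ as the time variable. The key observation is that \eqref{eq:density_def} has no drift term: writing $\dd \pi_t(\vx) = \inprod{\vx - \vm_t}{\dd \vW_t}\,\pi_t(\vx)$, the quantity $\pi_t(\vx)$ is (for each fixed $\vx$) a local martingale, since the right-hand side is purely a stochastic integral against $\dd \vW_t$. Granting the requisite integrability to upgrade ``local martingale'' to ``martingale'' (which holds under the standing assumptions discussed in the existence/uniqueness remark, e.g.\ when $\pi_t$ has uniformly bounded covariance so that $\vm_t$ stays controlled), this immediately gives $\E[\pi_t(\vx)] = \E[\pi_0(\vx)] = \pi_0(\vx)$ for every $\vx$ and every $t \ge 0$, which is the second claim.

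For the first claim, I would define $Z_t \defeq \int \pi_t(\vx)\,\dd \vx$ and compute its dynamics by integrating \eqref{eq:density_def} over $\vx \in \R^d$. Exchanging the (stochastic) integral over $t$ with the Lebesgue integral over $\vx$ — justified again by integrability, or by first working with the exponential-tilt representation from Perspective~\ref{perspective:tilt} where everything is manifestly finite — one obtains
\[
\dd Z_t = \inprod{\int (\vx - \vm_t)\pi_t(\vx)\,\dd \vx}{\dd \vW_t}.
\]
Now here is the point: if $\pi_t$ is genuinely a probability density (i.e.\ $Z_t = 1$) then $\int(\vx - \vm_t)\pi_t(\vx)\,\dd\vx = \vm_t - \vm_t = \0_d$ by the very definition $\vm_t = \E_{\vx\sim\pi_t}[\vx]$, so $\dd Z_t = 0$. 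Since $Z_0 = \int \pi_0(\vx)\,\dd\vx = 1$, this shows $Z_t \equiv 1$. To make this rigorous without circularity, I would instead let $\wt\vm_t \defeq \frac{1}{Z_t}\int \vx\,\pi_t(\vx)\,\dd\vx$ denote the honest normalized mean (so that the drift vanishes by construction, $\int(\vx - \wt\vm_t)\pi_t\,\dd\vx = Z_t\wt\vm_t - Z_t\wt\vm_t$... wait, one must be careful that \eqref{eq:density_def} is stated with $\vm_t = \E_{\vx\sim\pi_t}[\vx]$ meaning exactly the normalized mean) — so really the cleanest route is: observe $Z_0 = 1$, and show by induction/continuity that as long as $Z_t = 1$ the drift of $Z$ is zero; combined with $Z_t$ being continuous this closes the argument. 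Alternatively, and most cleanly, appeal to Perspective~\ref{perspective:tilt}: the exponential tilt $\pi_t(\vx) \propto \exp(\inprod{\vc_t}{\vx} - \frac t2\norm{\vx}_2^2)\pi_0(\vx)$ is normalized by definition, and one checks (via It\^o on $\log$ of the normalizing constant and on the tilt) that this normalized family satisfies exactly \eqref{eq:density_def}; then normalization is automatic and $Z_t = 1$ needs no separate proof.

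Concretely, the steps in order: (1) recall from Perspective~\ref{perspective:tilt} the unnormalized density $g_t(\vx) \defeq \exp(\inprod{\vc_t}{\vx} - \frac t2\norm{\vx}_2^2)\pi_0(\vx)$ and its normalizer $N_t \defeq \int g_t$; (2) apply It\^o's lemma to $\log g_t(\vx)$ using $\dd \vc_t = \vm_t\,\dd t + \dd\vW_t$, getting $\dd \log g_t(\vx) = \inprod{\vx}{\vm_t}\,\dd t + \inprod{\vx}{\dd\vW_t} - \frac12\norm{\vx}_2^2\,\dd t$, hence $\dd g_t(\vx) = g_t(\vx)\bigl(\inprod{\vx}{\vm_t}\,\dd t + \inprod{\vx}{\dd\vW_t}\bigr)$ (the $-\frac t2\norm\vx_2^2$ time-derivative cancels the It\^o correction $\frac12\norm\vx_2^2\,\dd t$); (3) integrate over $\vx$ to get $\dd N_t = N_t\bigl(\norm{\vm_t}_2^2\,\dd t + \inprod{\vm_t}{\dd\vW_t}\bigr)$, so by It\^o again $\dd \log N_t = \frac12\norm{\vm_t}_2^2\,\dd t + \inprod{\vm_t}{\dd\vW_t}$; (4) write $\pi_t = g_t/N_t$ and apply It\^o to the quotient, so that $\dd\pi_t(\vx) = \pi_t(\vx)\bigl(\inprod{\vx - \vm_t}{\dd\vW_t}\bigr)$ after the drift and cross-variation terms cancel — this recovers \eqref{eq:density_def} and simultaneously shows $\pi_t$ is a normalized density for all $t$. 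Then the martingale claim $\E[\pi_t(\vx)] = \pi_0(\vx)$ follows from the driftlessness of the resulting SDE (with the usual caveat about promoting the local martingale to a true martingale).

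The main obstacle is the integrability bookkeeping: interchanging $\int_{\R^d}\dd\vx$ with the stochastic integral $\int \cdot\,\dd\vW_t$, and justifying that the local martingale $t \mapsto \pi_t(\vx)$ is a genuine martingale (needed for $\E[\pi_t(\vx)] = \pi_0(\vx)$ rather than just $\E[\pi_t(\vx)] \le \pi_0(\vx)$). Under hypotheses guaranteeing well-posedness of \eqref{eq:tilt_def} — e.g.\ uniformly bounded $\normsop{\Cov(\pi_t)}$, as in the remark — these are routine (one can use a localization/stopping-time argument together with uniform integrability bounds on $g_t(\vx)$ coming from boundedness of $\vm_t$ on compact time intervals), so I would state the lemma under those standing assumptions and handle the interchange with a brief appeal to Fubini for stochastic integrals plus dominated convergence, rather than belaboring it.
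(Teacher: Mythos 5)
Your proposal contains the paper's argument and is essentially correct, but you talk yourself into an unnecessary worry. The paper's proof is exactly your first route: integrate the SDE over $\vx$ to get $\dd Z_t = \inprod{\int(\vx - \vm_t)\pi_t(\vx)\,\dd\vx}{\dd\vW_t}$, observe the drift cancels, and conclude $Z_t \equiv Z_0 = 1$; the second claim is immediate from the local-martingale (driftless) structure of $\dd\pi_t(\vx)$. Where you pause — ``if $\pi_t$ is genuinely a probability density then $\int(\vx-\vm_t)\pi_t\,\dd\vx = \0_d$, so this looks circular'' — there is in fact no circularity. Writing $Z_t := \int\pi_t$ and $\vm_t := Z_t^{-1}\int\vx\,\pi_t\,\dd\vx$ (the normalized mean, which is what $\E_{\vx\sim\pi_t}[\vx]$ means), one has
\[
\int(\vx - \vm_t)\,\pi_t(\vx)\,\dd\vx \;=\; \int\vx\,\pi_t\,\dd\vx \;-\; \vm_t\int\pi_t\,\dd\vx \;=\; Z_t\vm_t - Z_t\vm_t \;=\; \0_d
\]
regardless of the value of $Z_t$. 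So $\dd Z_t = 0$ unconditionally, and with $Z_0 = 1$ this closes the argument without any induction/continuity bootstrap. The paper's shorthand ``$\inprod{\vm_t - \vm_t}{\dd\vW_t}$'' is just this computation with $Z_t$ suppressed.

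Your alternative route through Perspective~\ref{perspective:tilt} (steps (1)--(4)) is also correct, but it effectively proves the reverse direction of Theorem~\ref{thm:tilt_density} — the exact calculation the paper describes as ``an instructive exercise'' after that theorem and spells out in Appendix~\ref{app:anisotropic}. Since the paper uses Lemma~\ref{lem:pit_density} inside the proof of Theorem~\ref{thm:tilt_density} (to pin down the $\vx$-independent term $C_t$), adopting your tilt-based route would require reordering those two results; it is not wrong, just more machinery than the lemma needs. Your remarks on the local-to-true-martingale upgrade and the interchange of $\int\dd\vx$ with $\int\cdot\,\dd\vW_t$ flag real technical points that the paper leaves implicit, and your proposed handling (localization plus dominated convergence under the standing covariance bound) is reasonable.
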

\begin{proof}
The first part asks to show that 
\[\dd\Par{ \int \pi_t(\vx) \dd \vx} = 0,\]
from which we can conclude $\int \pi_t(\vx) \dd \vx = 1$ for all $t \ge 0$. To see this, we apply \eqref{eq:density_def}:
\[\dd\Par{ \int \pi_t(\vx) \, \dd \vx} = \int \inprod{\vx - \vm_t}{\dd \vW_t} \pi_t(\vx)\, \dd \vx = \inprod{\vm_t - \vm_t}{\dd \vW_t} = 0.\]
The second part is immediate because $\pi_t(\vx)$ is a martingale in \eqref{eq:density_def} for all $\vx \in \R^d$.
\end{proof}

Thus, $\pi_t$ always remains a valid probability measure, regardless of the realization of $\{\vW_t\}_{t \ge 0}$.
We now state the main result of this section: the following equivalence, shown by \cite{Eldan13}.

\begin{theorem}\label{thm:tilt_density}
The dynamics of $\pi_t$ given by \eqref{eq:tilt_def} and \eqref{eq:density_def} are the same.
\end{theorem}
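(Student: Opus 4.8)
The plan is to verify that the measure-valued process $\pi_t$ defined implicitly by the tilt dynamics \eqref{eq:tilt_def} satisfies exactly the SDE \eqref{eq:density_def}, and then invoke uniqueness of solutions to conclude the two descriptions coincide. Concretely, starting from Perspective~\ref{perspective:tilt}, I would write $\pi_t(\vx) = \frac{1}{Z_t} g_t(\vx)$ where $g_t(\vx) \defeq \exp(\inprod{\vc_t}{\vx} - \tfrac t2 \norm{\vx}_2^2)\pi_0(\vx)$ and $Z_t \defeq \int g_t(\vx)\,\dd\vx$, and then compute $\dd \pi_t(\vx)$ by applying It\^o's lemma (Lemma~\ref{lem:ito}) to the two factors and combining via the It\^o product rule.

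**The key steps, in order.** First, compute $\dd g_t(\vx)$: since $\log g_t(\vx) = \inprod{\vc_t}{\vx} - \tfrac t2\norm{\vx}_2^2 + \log\pi_0(\vx)$ and $\dd\vc_t = \vm_t\,\dd t + \dd\vW_t$, It\^o's lemma applied to $\exp(\cdot)$ gives
\[
\dd g_t(\vx) = g_t(\vx)\Par{\inprod{\vx}{\dd\vc_t} - \tfrac12\norm{\vx}_2^2\,\dd t + \tfrac12\norm{\vx}_2^2\,\dd t} = g_t(\vx)\Par{\inprod{\vx}{\vm_t}\,\dd t + \inprod{\vx}{\dd\vW_t}},
\]
where the quadratic variation term $\tfrac12\norm{\vx}_2^2\,\dd t$ from $\dd\vc_t$ exactly cancels the $-\tfrac t2\norm{\vx}_2^2$ drift contribution. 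Second, compute $\dd Z_t = \int \dd g_t(\vx)\,\dd\vx = Z_t\Par{\inprod{\vm_t}{\vm_t}\,\dd t + \inprod{\vm_t}{\dd\vW_t}}$, using that $\int g_t(\vx)\vx\,\dd\vx = Z_t \vm_t$. Third, apply It\^o's lemma to $1/Z_t$ (a scalar process): $\dd(Z_t^{-1}) = -Z_t^{-2}\,\dd Z_t + Z_t^{-3}\,\dd\langle Z\rangle_t = Z_t^{-1}\Par{-\inprod{\vm_t}{\vm_t}\,\dd t - \inprod{\vm_t}{\dd\vW_t} + \norm{\vm_t}_2^2\,\dd t} = Z_t^{-1}\Par{-\inprod{\vm_t}{\dd\vW_t}}$. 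Fourth, apply the It\^o product rule to $\pi_t(\vx) = Z_t^{-1} g_t(\vx)$, being careful to include the cross-variation term $\dd\langle Z^{-1}, g(\vx)\rangle_t = -Z_t^{-1}g_t(\vx)\inprod{\vm_t}{\vx}\,\dd t$. Collecting terms, the $\dd t$ contributions should cancel (one expects $\inprod{\vx}{\vm_t} - \norm{\vm_t}_2^2 - \inprod{\vm_t}{\vx} + \norm{\vm_t}_2^2$-type cancellations, or more carefully $\inprod{\vx - \vm_t}{\vm_t}\,\dd t$ from $g$ minus the cross term, which should vanish), leaving
\[
\dd\pi_t(\vx) = \pi_t(\vx)\Par{\inprod{\vx}{\dd\vW_t} - \inprod{\vm_t}{\dd\vW_t}} = \inprod{\vx - \vm_t}{\dd\vW_t}\,\pi_t(\vx),
\]
which is precisely \eqref{eq:density_def}. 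Finally, I would note that both \eqref{eq:tilt_def} and \eqref{eq:density_def} determine $\vm_t$ consistently (in \eqref{eq:density_def}, $\vm_t$ is defined as the mean of $\pi_t$, matching the construction), so under the solution-existence caveat in the Remark, the two processes agree; conversely, given a solution of \eqref{eq:density_def}, one recovers \eqref{eq:tilt_def} by defining $\vc_t$ through the log-linear tilt structure, which is preserved by the dynamics.

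**The main obstacle** I anticipate is bookkeeping the It\^o correction and cross-variation terms carefully enough that all the $\dd t$ terms genuinely cancel — it is easy to drop a quadratic-variation contribution (e.g., forgetting that $\dd\vc_t$ has nonzero quadratic variation $\id_d\,\dd t$, which is exactly what kills the Gaussian-regularization drift) or to mishandle the $1/Z_t$ expansion. A secondary subtlety is justifying the interchange of $\dd$ and $\int\dd\vx$ when differentiating $Z_t$ and $\vm_t$, which requires the integrability/moment conditions implicitly assumed (finiteness of $\vm_0$ and, for the computation of $\dd\vm_t$, boundedness of $\Cov(\pi_t)$ as discussed in the Remark); I would state these regularity assumptions up front and not belabor the measure-theoretic justification, consistent with the survey's stated level of rigor. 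One should also decide how much to say about the converse direction; I expect a sentence observing that \eqref{eq:density_def} preserves the exponential-tilt form of $\pi_t$ (since the multiplicative increment $1 + \inprod{\vx - \vm_t}{\dd\vW_t}$ exponentiates, via It\^o, to a log-linear-in-$\vx$ plus quadratic-in-$\vx$ correction whose quadratic part is deterministic and equals $-\tfrac t2\norm{\vx}_2^2$) suffices, and then $\vc_t$ is read off as the coefficient of the linear term.
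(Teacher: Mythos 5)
Your computation is correct, and it proves the equivalence by going in the \emph{opposite} direction from the paper's proof. The paper starts from the measure-valued SDE \eqref{eq:density_def}, applies It\^o's lemma to get $\dd\log\pi_t(\vx) = \inprod{\vx}{\dd\vW_t + \vm_t\,\dd t} - \tfrac12\norm{\vx}_2^2\,\dd t + C_t$, and then invokes Lemma~\ref{lem:pit_density} to pin down the $\vx$-independent term $C_t$ as exactly what is needed for $\pi_t$ to remain a density, thus recovering the tilt form \eqref{eq:tilt_def} without ever computing $Z_t$ explicitly. You instead start from the tilt representation and track $g_t$, $Z_t$, $Z_t^{-1}$ directly, carefully carrying all quadratic-variation and cross-variation terms through the It\^o product rule until the $\dd t$ contributions cancel. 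Your direction is precisely the one the paper labels ``an instructive exercise to reverse Theorem~\ref{thm:tilt_density}'' and carries out in Appendix~\ref{app:anisotropic} for the anisotropic process (there the bookkeeping is organized via $\dd\log Z_t$ and $\dd\log\pi_t$ rather than $\dd Z_t^{-1}$ and the product rule, but the content is the same). What the paper's route buys is brevity: the normalization argument lets one skip the $Z_t$ arithmetic entirely. What your route buys is directness: one verifies the claim by starting from the defining Perspective~\ref{perspective:tilt} rather than its dual. I checked your It\^o corrections: the cancellation of $-\tfrac t2\norm{\vx}_2^2$ against the quadratic variation of $\inprod{\vc_t}{\vx}$ in computing $\dd g_t$, the second-order term in $\dd(Z_t^{-1})$, and the cross-variation $-\pi_t(\vx)\inprod{\vm_t}{\vx}\,\dd t$ all look right, and the $\dd t$ terms indeed vanish, leaving $\dd\pi_t(\vx) = \inprod{\vx-\vm_t}{\dd\vW_t}\pi_t(\vx)$. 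Since the theorem asserts equivalence, either direction (combined with a uniqueness remark, which both you and the paper treat somewhat informally) suffices.
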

\begin{proof}
Let us start with the dynamics \eqref{eq:density_def}. By It\^o's lemma (Lemma~\ref{lem:ito}),
\begin{equation}\label{eq:log_sde}\dd \log\pi_t(\vx) = \inprod{\vx - \vm_t}{\dd \vW_t} - \half\norm{\vx - \vm_t}_2^2\,  \dd t = \inprod{\vx}{\dd \vW_t + \vm_t \dd t} - \half\norm{\vx}_2^2 \, \dd t + C_t,\end{equation}
where $C_t = -\inprod{\vm_t}{\dd \vW_t} - \half\norm{\vm_t}_2^2 \, \dd t$ is independent of $\vx$. We know from Lemma~\ref{lem:pit_density} that $C_t$ must be chosen so that $\pi_t$ stays a density, so by integrating \eqref{eq:log_sde}, we conclude that \eqref{eq:tilt_def} holds:
\[\pi_t(\vx) \propto \exp\Par{\inprod{\vc_t}{\vx} - \frac t 2 \norm{\vx}_2^2} \pi_0(\vx),\quad \dd \vc_t = \dd \vW_t + \vm_t\, \dd t. \qedhere\]
\end{proof}

Combining Lemma~\ref{lem:pit_density} and Theorem~\ref{thm:tilt_density} shows our earlier claim from Section~\ref{sec:intro}: that \eqref{eq:tilt_def} induces a measure-valued martingale decomposing $\pi_0$ into randomly-tilted measures $\pi_t$. 

It is an instructive exercise to reverse Theorem~\ref{thm:tilt_density}, i.e., start from the dynamics \eqref{eq:tilt_def} and derive the equivalence to~\eqref{eq:density_def}. This follows by rewriting $\dd \log \pi_t(\vx) = \inprod{\dd \vc_t}{\vx} - \half \norm{\vx}_2^2 - \dd \log(Z_t)$, where $Z_t \defeq \int \exp(\inprod{\vc_t}{\vx} - \frac t 2 \norm{\vx}_2^2) \pi_0(\vx) \, \dd \vx$, and using It\^o's lemma to show
\[\dd \log(Z_t) = \inprod{\dd \vc_t}{\vm_t} - \half \norm{\vm_t}_2^2 \, \dd t.\]
This gives the same form of $\dd \log \pi_t(\vx)$ as computed earlier in \eqref{eq:log_sde}, and indeed, applying It\^o's lemma once more completes the derivation of $\dd \pi_t(\vx)$ as in Perspective~\ref{perspective:density}. We provide this calculation in Appendix~\ref{app:anisotropic}, for more general anisotropic stochastic localization processes.
\section{Posterior estimation}\label{sec:posterior}

In this section, we present an information-theoretic perspective on stochastic localization due to \cite{AlaouiM22}. This perspective recasts the tilt $\vc_t$ as a noisy observation from a Gaussian channel.

\begin{tcolorbox}[colback=blue!10, colframe=blue!50!black, boxrule=0.5pt, arc=2mm]
\begin{perspective}\label{perspective:posterior}
Let $\pi_0 \in \calP(\R^d)$ be such that $\vm_0 \defeq \E_{\vx \sim \pi_0}[\vx]$ exists, and let $\vx \sim \pi_0$. Define noisy observations $\{\vc_t \defeq t\vx + \vB_t\}_{t \ge 0}$, where $\{\vB_t\}_{t \ge 0}$ is a Wiener process.
\end{perspective}
\end{tcolorbox}

Conditioned on $\vx \sim \pi_0$, the output of the noisy channel $\vc_t$ follows $\vc_t \sim \Nor(t\vx, t \id_d)$. This implies that the information content of $\vc_t$ is increasing, in the sense that $\frac 1 t \vc_t = \Nor(\vx, \frac 1 t \id_d)$ becomes a better estimate of $\vx$ as $t \to \infty$, which again captures the ``localization'' behavior of the process.

Before introducing the main result of this section, we require two helper facts. The first is a reparameterization of a stochastic process in terms of conditional means.

\begin{lemma}[{\cite[Theorem 7.12]{LipsterS01}}]\label{lem:mean_sde}
Let $\dd \vc_t = \vv_t \, \dd t + \dd \vB_t$, $\vc_0 = \0_d$, where $\{\vB_t\}_{t \ge 0}$ is a Wiener process. Let $\vm_t \defeq \E[\vv_t \mid \mathscr{F}_t]$ for all $t \ge 0$, where $\{\mathscr{F}_t\}_{t \ge 0}$ is the filtration generated by $\{\vc_t\}_{t \ge 0}$. Then we also have that $\dd \vc_t = \vm_t \, \dd t + \dd \vW_t$ where $\{\vW_t\}_{t \ge 0}$ is a Wiener process. 
\end{lemma}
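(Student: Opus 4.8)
The plan is to construct the candidate Wiener process explicitly as the \emph{innovations process} of nonlinear filtering, namely $\vW_t \defeq \vc_t - \int_0^t \vm_s \, \dd s$, and then to verify the three hypotheses of L\'evy's characterization theorem for $\{\vW_t\}_{t \ge 0}$ with respect to the observation filtration $\{\mathscr{F}_t\}_{t \ge 0}$: that it is continuous and $\mathscr{F}$-adapted, that it is an $\mathscr{F}$-martingale started at $\0_d$, and that its componentwise quadratic covariations are $\langle \vW_i, \vW_j\rangle_t = \delta_{ij} t$. Once this is done, $\dd \vc_t = \vm_t \, \dd t + \dd \vW_t$ holds by definition of $\vW_t$. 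The adaptedness is immediate: $\vc_t$ is $\mathscr{F}_t$-measurable by definition, $\vm_s$ is $\mathscr{F}_s$-measurable for each $s$, and (taking a jointly measurable version of $(s,\omega) \mapsto \vm_s(\omega)$, which exists under the implicit integrability hypothesis $\int_0^t \E\norm{\vv_s}_2 \, \dd s < \infty$) the integral $\int_0^t \vm_s \, \dd s$ is continuous and $\mathscr{F}_t$-measurable.

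The first substantive step is the martingale property. For $s \le t$ we write
\[\vW_t - \vW_s = \Par{\vB_t - \vB_s} + \int_s^t \Par{\vv_u - \vm_u} \, \dd u,\]
so it suffices to show $\E[\vB_t - \vB_s \mid \mathscr{F}_s] = \0_d$ and $\E\big[\int_s^t (\vv_u - \vm_u) \, \dd u \mid \mathscr{F}_s\big] = \0_d$. The first holds because $\vB$ is a Wiener process with respect to an ambient filtration $\{\mathscr{G}_t\}_{t \ge 0} \supseteq \{\mathscr{F}_t\}_{t \ge 0}$ carrying $\{\vv_t\}_{t \ge 0}$, so $\vB_t - \vB_s$ is independent of $\mathscr{G}_s \supseteq \mathscr{F}_s$; the second follows from Fubini's theorem together with the tower property, since for $u \ge s$ one has $\E[\vm_u \mid \mathscr{F}_s] = \E[\E[\vv_u \mid \mathscr{F}_u] \mid \mathscr{F}_s] = \E[\vv_u \mid \mathscr{F}_s]$, so the two time-integrals cancel after conditioning.

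Next I compute the quadratic covariation. Since $\vW$ differs from $\vc$ by the finite-variation term $\int_0^t \vm_s \, \dd s$, and $\vc$ in turn differs from $\vB$ by the finite-variation term $\int_0^t \vv_s \, \dd s$, and finite-variation parts contribute nothing to the bracket, we get $\langle \vW_i, \vW_j\rangle_t = \langle \vB_i, \vB_j\rangle_t = \delta_{ij} t$ for all $i, j \in [d]$. Having shown that $\vW$ is a continuous $\mathscr{F}$-martingale (hence a continuous $\mathscr{F}$-local martingale) started at $\0_d$ with this covariation structure, L\'evy's characterization theorem yields that $\vW$ is a standard $d$-dimensional Wiener process relative to $\{\mathscr{F}_t\}_{t \ge 0}$, which completes the proof.

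The main obstacle is not any individual computation but the filtration bookkeeping that makes them legitimate: the statement mentions only $\{\mathscr{F}_t\}_{t \ge 0}$, but to interpret "$\vv_t$" and "$\vB_t$ is a Wiener process" one must fix an ambient filtration $\{\mathscr{G}_t\}_{t \ge 0} \supseteq \{\mathscr{F}_t\}_{t \ge 0}$ with respect to which $\vv$ is adapted and $\vB$ is a Wiener process, and the key cancellations above genuinely use both that $\vB$-increments are independent of $\mathscr{F}_s$ and that $\vm$ is the $\mathscr{F}$-optional projection of $\vv$. A secondary technical point, needed for the whole argument to typecheck, is the existence of a progressively measurable version of $s \mapsto \vm_s$ so that $\int_0^t \vm_s \, \dd s$ is well-defined $\mathscr{F}_t$-measurably; once these framework issues are pinned down, the remaining steps are the standard innovations calculation.
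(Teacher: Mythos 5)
Your proof is correct, and it is essentially the same approach the paper sketches: both define the innovations process $\vW_t = \vc_t - \int_0^t \vm_s \, \dd s$ and show that it is a Wiener process. The paper suggests computing its characteristic function directly via It\^o's lemma, whereas you verify the hypotheses of L\'evy's characterization theorem (continuity, the $\mathscr{F}$-martingale property, and $\langle \vW_i, \vW_j\rangle_t = \delta_{ij}\,t$); since the standard proof of L\'evy's characterization is itself precisely the characteristic-function computation via It\^o's lemma, the two packagings are interchangeable, and the substantive step in either is the cancellation $\E[\vv_u - \vm_u \mid \mathscr{F}_s] = \0_d$ for $u \ge s$ that you identify. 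Your explicit handling of the ambient filtration $\{\mathscr{G}_t\}$ carrying $\vv$ and $\vB$, and of the progressive measurability of $\vm$, correctly addresses details the paper elides.
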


We omit the proof of Lemma~\ref{lem:mean_sde}, but mention here that it follows by using It\^o's lemma to compute the characteristic function of $\vc_t - \int_0^t \vm_s \, \dd s$, which is the same as that of a Wiener process. 

The second helper fact is the Cameron-Martin theorem (a specialization of the more general Girsanov's theorem, cf.\ Lemma~\ref{lem:girsanov}), derived by similar means as those used to show Lemma~\ref{lem:mean_sde}. It characterizes how adding a drift $\vv_s \,\dd s$ changes the \emph{path measure} of a Wiener process.

\begin{lemma}[{Cameron-Martin theorem; \cite[Theorem 5.24]{Gall16}}]\label{lem:cameron_martin}
Let $\{\vh_s\}_{s \in [0, t]}$ follow the ODE $\vh_0 = \0_d$, $\dd \vh_s = \vv_s \, \dd s$, where $\int_0^t \norm{\vv_s}_2^2 \, \dd s < \infty$, and let $\vc_s = \vh_s + \vW_s$ for all $s \in [0, t]$, where $\{\vW_s\}_{s \ge 0}$ is a Wiener process. 

Let $\vp_{[0, t]} \in \calC([0, t])$ index a continuous path $\{\vp_s\}_{s \in [0, t]}$ in $\R^d$, let $W$ denote the Wiener measure of $\vp_{[0, t]} = \vW_{[0, t]}$ (i.e., the density of $\vW_s = \vp_s$ for all $s \in [0, t]$), and let $P$ denote the path measure of $\vp_{[0, t]} = \vc_{[0, t]}$. Then for all $\vp_{[0, t]} \in \calC([0, t])$, 
\[\frac{P(\vp_{[0, t]})}{W(\vp_{[0, t]})} = \exp\Par{\int_0^t \inprod{\vv_s}{\dd \vp_s} - \half \int_0^t \norm{\vv_s}_2^2 \, \dd s}.\]
\end{lemma}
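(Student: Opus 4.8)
The plan is to reduce to a finite-dimensional Gaussian computation and then pass to a continuous limit. Both $P$ and $W$ are probability measures on $\calC([0,t])$, whose Borel $\sigma$-algebra is generated by the increasing family $\{\calF_n\}_{n \ge 1}$ of cylinder $\sigma$-algebras associated to a refining sequence of partitions of $[0,t]$ with mesh tending to $0$. So it suffices to (i) identify the finite-dimensional density ratios $Z_n \defeq \frac{\dd P|_{\calF_n}}{\dd W|_{\calF_n}}$, (ii) show $\{Z_n\}$ converges in $L^1(W)$, and (iii) recognize the limit as the claimed exponential; this simultaneously establishes $P \ll W$ and pins down $\frac{\dd P}{\dd W}$.

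For step (i): along a partition $0 = s_0 < s_1 < \cdots < s_n = t$, write $\Delta\vp_k \defeq \vp_{s_k} - \vp_{s_{k-1}}$, $\Delta_k \defeq s_k - s_{k-1}$, and $\va_k \defeq \vh_{s_k} - \vh_{s_{k-1}} = \int_{s_{k-1}}^{s_k} \vv_u \, \dd u$. Under $W$ the increments $\{\Delta\vp_k\}$ are independent $\Nor(\0_d, \Delta_k \id_d)$, while under $P$ they are independent $\Nor(\va_k, \Delta_k \id_d)$. Taking the product of the per-increment Gaussian density ratios and completing the square gives
\[Z_n(\vp) = \exp\Par{\sum_{k=1}^n \frac{\inprod{\va_k}{\Delta\vp_k}}{\Delta_k} - \half\sum_{k=1}^n \frac{\norm{\va_k}_2^2}{\Delta_k}} = \exp\Par{\int_0^t \inprod{\bar{\vv}^{(n)}_s}{\dd\vp_s} - \half \int_0^t \norm{\bar{\vv}^{(n)}_s}_2^2 \, \dd s},\]
where $\bar{\vv}^{(n)}$ is the piecewise-constant ``cell average'' of $\vv$, equal to $\frac{1}{\Delta_k}\int_{s_{k-1}}^{s_k}\vv_u\,\dd u$ on $[s_{k-1}, s_k)$; here $\int_0^t \inprod{\bar{\vv}^{(n)}_s}{\dd\vp_s}$ is the Wiener integral, and the displayed identity is exact, not asymptotic.

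For steps (ii)--(iii): since the partitions refine and have vanishing mesh, $\bar{\vv}^{(n)} = \E[\vv \mid \calG_n] \to \vv$ in $L^2([0,t])$ by the martingale convergence theorem (with $\calG_n$ the $\sigma$-algebra on $[0,t]$ generated by the cells), using only $\int_0^t \norm{\vv_s}_2^2\,\dd s < \infty$. Hence $\int_0^t \norm{\bar{\vv}^{(n)}_s}_2^2 \, \dd s \to \int_0^t \norm{\vv_s}_2^2\,\dd s$, and by the It\^o isometry $\int_0^t \inprod{\bar{\vv}^{(n)}_s}{\dd\vp_s} \to \int_0^t \inprod{\vv_s}{\dd\vp_s}$ in $L^2(W)$, so $Z_n \to Z \defeq \exp\bigl(\int_0^t \inprod{\vv_s}{\dd\vp_s} - \half\int_0^t\norm{\vv_s}_2^2\,\dd s\bigr)$ in $W$-probability. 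To upgrade to $L^1(W)$, note $\{Z_n\}$ is a nonnegative $W$-martingale with $\E_W[Z_n^2] = \exp\bigl(\int_0^t\norm{\bar{\vv}^{(n)}_s}_2^2\,\dd s\bigr) \le \exp\bigl(\int_0^t\norm{\vv_s}_2^2\,\dd s\bigr)$ bounded uniformly in $n$ (a one-line Gaussian computation via the exponential martingale of $2\bar{\vv}^{(n)}$), hence uniformly integrable; therefore $Z_n \to Z$ in $L^1(W)$, $\E_W[Z] = 1$, and $\frac{\dd P}{\dd W} = Z$, as claimed.

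The main obstacle is this limiting step: one must (a) give $\int_0^t \inprod{\vv_s}{\dd\vp_s}$ a $W$-a.s.\ meaning --- it is an It\^o integral against $\vW$ plus the finite-variation contribution $\int_0^t \norm{\vv_s}_2^2\,\dd s$ when $\vp = \vc$, so the formula is consistent under either substitution $\vp = \vW$ or $\vp = \vc$ --- and (b) justify uniform integrability so that the finite-dimensional computation determines the genuine Radon--Nikodym derivative and not merely a pointwise limit; everything else is routine Gaussian bookkeeping. An alternative route, closer in spirit to Lemma~\ref{lem:mean_sde}: define $Z$ directly, verify $\{Z_s\}_{s \le t}$ is a true martingale (immediate since $\vv$ is deterministic and square-integrable, with no Novikov-type condition needed), set $\dd Q \defeq Z \, \dd W$, and use It\^o's lemma to compute the joint characteristic function of $(\vp_{s_1}, \dots, \vp_{s_n})$ under $Q$, checking it matches that of $(\vc_{s_1}, \dots, \vc_{s_n})$ under $W$; then $Q = P$.
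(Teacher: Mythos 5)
Your proof is correct. Worth noting first that the paper does not actually prove Lemma~\ref{lem:cameron_martin}: it cites it as \cite[Theorem 5.24]{Gall16} and remarks only that it is ``derived by similar means'' as Lemma~\ref{lem:mean_sde}, i.e., by using It\^o's lemma to compute a characteristic function and matching it to a known law. Your primary argument takes a genuinely different and more elementary route: it computes the cylinder-set Radon--Nikodym derivatives $Z_n = \dd(P|_{\calF_n})/\dd(W|_{\calF_n})$ exactly by pure Gaussian algebra (independent Gaussian increments with shifted means), recognizes the exponent as a Wiener integral against the piecewise-constant cell average $\bar{\vv}^{(n)} = \E[\vv\mid\calG_n]$, and then passes to the limit. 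The limiting argument is clean: $\bar{\vv}^{(n)} \to \vv$ in $L^2([0,t])$ by martingale convergence, so $Z_n \to Z$ in $W$-probability by the It\^o isometry, and the explicit second-moment bound $\E_W[Z_n^2] = \exp(\int_0^t\normsf{\bar{\vv}^{(n)}_s}^2\,\dd s) \le \exp(\int_0^t\normsf{\vv_s}^2\,\dd s)$ (using Jensen's contraction) gives uniform integrability and upgrades this to $L^1(W)$, which is exactly what is needed to identify $Z$ as the density on the full $\sigma$-algebra. You are also right to flag that the stated identity ``for all $\vp_{[0,t]}$'' is really an a.s.\ statement, since $\int_0^t\inprods{\vv_s}{\dd\vp_s}$ is a Wiener integral, not a pathwise integral; your observation that the formula is consistent under either substitution $\vp=\vW$ or $\vp=\vc$ (the latter producing the finite-variation correction $\int_0^t\normsf{\vv_s}^2\,\dd s$) is the right sanity check. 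What each route buys: your finite-dimensional approach is self-contained and requires nothing beyond Gaussian densities, $L^2$ martingale convergence, and the It\^o isometry, so it is arguably more transparent for a reader unfamiliar with stochastic calculus; the paper's hinted characteristic-function route (your sketched alternative at the end) is shorter once one is comfortable with exponential martingales and changes of measure, and generalizes more readily to the full Girsanov setting with adapted random drifts, which your cylinder-set computation does not since it leans on the increments being Gaussian with deterministic means.
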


We are now ready to state the equivalence between Perspectives~\ref{perspective:tilt} and~\ref{perspective:posterior}, due to \cite{AlaouiM22}.

\begin{theorem}\label{thm:tilt_posterior}
The dynamics of $\vc_t$ given by \eqref{eq:tilt_def} and in Perspective~\ref{perspective:posterior} are the same. Moreover, the induced measure $\pi_t$ in \eqref{eq:tilt_def} is the same as the posterior distribution $\pi_0(\vx \mid \vc_t)$ in Perspective~\ref{perspective:posterior}. 
\end{theorem}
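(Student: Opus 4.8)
The plan is to establish the two claims of Theorem~\ref{thm:tilt_posterior} in sequence: first identify the posterior $\pi_0(\vx \mid \vc_t)$ from Perspective~\ref{perspective:posterior} with the tilted measure $\pi_t$ of \eqref{eq:tilt_def}, and then verify that the SDE governing $\vc_t$ in Perspective~\ref{perspective:posterior} coincides with \eqref{eq:tilt_def} once rewritten in terms of conditional means.

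For the first claim, I would apply Bayes' rule. Given $\vx \sim \pi_0$, the channel output satisfies $\vc_t \mid \vx \sim \Nor(t\vx, t\id_d)$, so the conditional density of $\vc_t$ given $\vx$ is proportional (in $\vx$) to $\exp(-\frac{1}{2t}\norm{\vc_t - t\vx}_2^2)$. Expanding the square and discarding the factor $\exp(-\frac{1}{2t}\norm{\vc_t}_2^2)$, which does not depend on $\vx$, this is proportional to $\exp(\inprod{\vc_t}{\vx} - \frac{t}{2}\norm{\vx}_2^2)$. Multiplying by the prior $\pi_0(\vx)$ and renormalizing gives exactly $\pi_0(\vx \mid \vc_t) \propto \exp(\inprod{\vc_t}{\vx} - \frac{t}{2}\norm{\vx}_2^2)\pi_0(\vx)$, which matches the definition of $\pi_t$ in \eqref{eq:tilt_def}. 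So once we know the two processes $\{\vc_t\}$ agree in law (with the same filtration), the induced measures agree as well.

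For the second claim, the natural route is Lemma~\ref{lem:mean_sde}. In Perspective~\ref{perspective:posterior} we have $\vc_t = t\vx + \vB_t$, i.e., $\dd \vc_t = \vx \, \dd t + \dd \vB_t$, which is of the form $\dd \vc_t = \vv_t \, \dd t + \dd \vB_t$ with the (non-adapted) drift $\vv_t \equiv \vx$. Writing $\mathscr{F}_t$ for the filtration generated by $\{\vc_t\}$ and $\vm_t \defeq \E[\vx \mid \mathscr{F}_t]$, Lemma~\ref{lem:mean_sde} yields $\dd \vc_t = \vm_t \, \dd t + \dd \vW_t$ for some Wiener process $\{\vW_t\}$. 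It remains to check that $\vm_t = \E[\vx \mid \mathscr{F}_t]$ here equals $\vm_t = \E_{\vx \sim \pi_t}[\vx]$ as in \eqref{eq:tilt_def}: this is precisely the content of the first claim, since the conditional law of $\vx$ given $\mathscr{F}_t$ is $\pi_0(\cdot \mid \vc_t) = \pi_t$ (the filtration $\mathscr{F}_t$ generated by the path $\vc_{[0,t]}$ carries the same information about $\vx$ as $\vc_t$ alone, because $\vc_{[0,t]}$ given $\vx$ is a Brownian motion with constant drift, so conditioning on the whole path is equivalent to conditioning on the endpoint — this can be seen from Cameron--Martin, Lemma~\ref{lem:cameron_martin}, as the likelihood ratio depends on the path only through $\int_0^t \inprod{\vx}{\dd\vp_s} = \inprod{\vx}{\vp_t}$ and $\int_0^t \norm{\vx}_2^2\,\dd s$). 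Hence the drift in Perspective~\ref{perspective:posterior} is $\E_{\vx\sim\pi_t}[\vx]$, matching \eqref{eq:tilt_def}, and since both processes start at $\vc_0 = \0_d$ and are driven by the same SDE, they have the same law.

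The main obstacle is the measure-theoretic point buried in the middle: justifying that conditioning on the full path $\mathscr{F}_t = \sigma(\vc_{[0,t]})$ gives the same posterior on $\vx$ as conditioning on the single random variable $\vc_t$, so that the ``conditional mean'' drift produced by Lemma~\ref{lem:mean_sde} really is the mean of $\pi_t$ rather than of some finer posterior. Everything else — the Bayes computation and the invocation of Lemma~\ref{lem:mean_sde} — is routine. I would handle the path-versus-endpoint sufficiency either by the Cameron--Martin likelihood-ratio argument sketched above (the Radon--Nikodym derivative of the law of $\vc_{[0,t]}\mid\vx$ against Wiener measure factors through $(\inprod{\vx}{\vc_t}, t\norm{\vx}_2^2)$, exhibiting $\vc_t$ as a sufficient statistic) or by directly citing that Brownian motion with drift has the terminal value as a sufficient statistic for the drift. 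One should also note at the outset the standing assumption guaranteeing $\{\vc_t\}$ in \eqref{eq:tilt_def} is well-posed (the remark on solutions), so that ``the same law'' is a meaningful conclusion.
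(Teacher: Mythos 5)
Your proof is correct and follows essentially the same route as the paper: Bayes' rule gives the posterior form, the Cameron--Martin likelihood ratio shows $\vc_t$ is a sufficient statistic for $\vx$ (so conditioning on $\mathscr F_t$ and on $\vc_t$ agree), and Lemma~\ref{lem:mean_sde} then converts the non-adapted drift $\vx$ into the adapted drift $\vm_t = \E[\vx\mid\mathscr F_t]$. The paper cites the Fisher--Neyman factorization theorem by name at the sufficiency step, but your Radon--Nikodym factorization argument is the same reasoning.
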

\begin{proof}
To see the second claim, regardless of the realization of $\vc_t$, we have that
\[\pi_0(\vx \mid \vc_t) \propto \exp\Par{-\frac 1 {2t}\norm{t\vx - \vc_t}_2^2}\pi_0(\vx) \propto \exp\Par{\inprod{\vc_t}{\vx} - \frac t 2 \norm{\vx}_2^2}\pi_0(\vx),\]
where we used Bayes' theorem in the above derivation. This agrees with the induced $\pi_t$ in \eqref{eq:tilt_def}.

Next, we claim that the distribution of $\vx \mid \mathscr{F}_t$ is the same as $\vx \mid \vc_t$, i.e., specifying just the endpoint $\vc_t$ gives as much information about $\vx$ as the entire path $\{\vc_s\}_{0 \le s \le t}$. Equivalently, $\vc_t$ is a sufficient statistic for $\vx$ with respect to $\mathscr{F}_t$. This follows from Lemma~\ref{lem:cameron_martin} with $\vv_s = \vx$ for all $s \in [0, t]$, which shows that the joint measure for $\vx \sim \pi_0$ and the induced path $\vc_{[0, t]} \in \calC([0, t])$ is
\[\propto \exp\Par{\int_0^t \inprod{\vx}{\dd \vc_s} - \frac t 2 \norm{\vx}_2^2} \pi_0(\vx) 
W(\vc_{[0, t]}) \propto \exp\Par{\inprod{\vc_t}{\vx} - \frac t 2 \norm{\vx}_2^2}\pi_0(\vx) W(\vc_{[0, t]}). \]
Note that $\vx$ only interacts with $\vc_t$ in the above factorization, so sufficiency of $\vc_t$ for $\vx$ follows from the Fisher--Neyman factorization theorem.
We conclude by proving the first claim.  
Starting from $\{\vc_t\}_{t \ge 0}$ in Perspective~\ref{perspective:posterior}, we apply Lemma~\ref{lem:mean_sde} with the substitution $\vv_t \gets \vx$. This shows that 
\begin{equation}\label{eq:y_sde}
\dd \vc_t = \vm_t \, \dd t + \dd \vW_t, \text{ where } \vm_t \defeq \E[\vx \mid \mathscr{F}_t]= \E[\vx \mid \vc_t] = \E_{\pi_t}[\vx]. \qedhere
\end{equation}
\end{proof}

\section{Diffusion models}\label{sec:diffusion}

In this section, we relate stochastic localization to a framework known as \emph{denoising diffusion probablistic models} (DDPMs), which was popularized by \cite{HoJA20, SongSKKEP21} and is widely used in the practice of generative modeling. The goal of diffusion models in the context of generative modeling is to sample from a distribution $\pi$, given a dataset of i.i.d.\ draws from it. 

The DDPM framework in particular is based on a time-changed backwards Ornstein--Uhlenbeck (OU) process. 
Recall that the OU process is the Langevin dynamics with a standard Gaussian as its stationary measure, i.e., it follows the following SDE from $\vx_0 \in \R^d$:
\begin{equation}\label{eq:ou_process}\dd \vx_t = -\vx_t \, \dd t + \sqrt 2 \, \dd \vB_t, \text{ where } \{\vB_t\}_{t \ge 0} \text{ is a Wiener process.}\end{equation}

We make the following simple observation.
\begin{lemma}\label{lem:ou_dist}
The distribution of $\vx_t \mid \vx_0$ in \eqref{eq:ou_process} is $\Nor(\exp(-t) \vx_0, (1 - \exp(-2t)) \id_d)$.
\end{lemma}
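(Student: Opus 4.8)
The plan is to solve the linear SDE \eqref{eq:ou_process} explicitly via an integrating factor, which is the standard route for the Ornstein--Uhlenbeck process. First I would set $\vy_t \defeq \exp(t) \vx_t$ and apply It\^o's lemma (Lemma~\ref{lem:ito}) to the map $(\vx, t) \mapsto \exp(t)\vx$; since this map is linear in $\vx$, its Hessian in $\vx$ vanishes and no second-order correction appears. Using $\dd \vx_t = -\vx_t \, \dd t + \sqrt 2 \, \dd \vB_t$, the drift terms cancel and one obtains $\dd \vy_t = \sqrt 2 \exp(t) \, \dd \vB_t$, hence upon integrating, $\vy_t = \vx_0 + \sqrt 2 \int_0^t \exp(s) \, \dd \vB_s$, i.e.
\[\vx_t = \exp(-t) \vx_0 + \sqrt 2 \exp(-t)\int_0^t \exp(s) \, \dd \vB_s.\]

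Next I would identify the law of $\vx_t$ conditioned on $\vx_0$. The stochastic integral $\int_0^t \exp(s) \, \dd \vB_s$ is a Wiener integral of a deterministic scalar integrand against a $d$-dimensional Wiener process, so it is a centered Gaussian vector with covariance $\left(\int_0^t \exp(2s) \, \dd s\right) \id_d = \tfrac{1}{2}(\exp(2t) - 1) \id_d$ by the It\^o isometry. Therefore $\vx_t \mid \vx_0$ is Gaussian with mean $\exp(-t)\vx_0$ and covariance $2 \exp(-2t) \cdot \tfrac{1}{2}(\exp(2t)-1) \id_d = (1 - \exp(-2t)) \id_d$, which is exactly the claim.

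There is no real obstacle here; the only points requiring a modicum of care are (i) checking that It\^o's lemma genuinely produces no extra drift term because the transformation is affine in the space variable, and (ii) invoking the It\^o isometry to compute the variance of the Wiener integral, together with the fact that a Wiener integral of a deterministic integrand is Gaussian. An alternative, essentially equivalent, route would be to appeal to the Fokker--Planck equation (Lemma~\ref{lem:fokker_planck}) and verify directly that the Gaussian density with the claimed mean and covariance solves $\partial_t \pi_t = \nabla \cdot(\vx \pi_t) + \Delta \pi_t$ with initial condition $\delta_{\vx_0}$; but the integrating-factor computation is shorter and more transparent, so I would present that one.
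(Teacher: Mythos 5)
Your proof is correct and takes essentially the same route as the paper: both solve the linear SDE via the explicit Itô-integral (integrating-factor) formula $\vx_t = \exp(-t)\vx_0 + \sqrt{2}\int_0^t \exp(-(t-s))\,\dd\vB_s$ and read off the covariance from the quadratic variation / Itô isometry. Yours simply spells out the intermediate substitution $\vy_t = \exp(t)\vx_t$ and the isometry calculation, which the paper leaves implicit.
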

\begin{proof}
Solving the SDE in \eqref{eq:ou_process} as an It\^o integral, we have
\[\vx_t = \exp(-t)\vx_0 + \sqrt 2 \int_0^t \exp(-(t - s))\,\dd \vB_s.\]
The mean of $\vx_t$ is thus $\exp(-t) \vx_0$ as claimed, and the covariance scales proportionally to the quadratic variation, i.e.,
$2\int_0^t \exp(-2(t - s))\,\dd s = 1 - \exp(-2t)$.
\end{proof}

Let us define $\pi_t$ to be the distribution of $\vx_t$ according to \eqref{eq:ou_process}, where we draw $\vx_0 \sim \pi_0$, the target of our sampling algorithm. We apply a one-to-one backwards time change $\tau: [0, \infty] \to [0, \infty]$, i.e., a $\tau$ that satisfies $\tau'(t) < 0$ for all times $t \ge 0$, and sets $\tau(0) = \infty$ and $\tau(\infty) = 0$. This induces an equivalent backwards process, indexed by a backwards time $u \ge 0$:
\begin{equation}\label{eq:backdef}\backvx_{u} \defeq \vx_{\tau^{-1}(u)},\quad \backpi_u \defeq \pi_{\tau^{-1}(u)}. \end{equation}
To see how \eqref{eq:backdef} relates to generative models, if we let $\backvx_0 = \vx_\infty$, which we can simulate with a draw from the stationary distribution $\Nor(\0_d, \id_d)$, then running the ``backwards process'' and producing a sample  $\backvx_\infty$ is equivalent to drawing from the target distribution $\backpi_\infty = \pi_0$.

We now give a helpful characterization of the backwards process $\{\backvx_u\}_{u \ge 0}$. The following result is standard, and an early derivation of it can be found, e.g., in \cite{Anderson82}.

\begin{lemma}\label{lem:timechange}
	Let $\dd \vx_t = \vx_t \, \dd t + \sqrt{2} \, \dd \vB_t$, where $\{\vB_t\}_{t \ge 0}$ is a Wiener process. Let $\tau: [0, \infty] \to [0, \infty]$ satisfy $\tau'(t) < 0$ for all $t \ge 0$, $\tau(0) = \infty$, $\tau(\infty) = 0$. Letting $\pi_t$ be the law of $\vx_t$, and following the notation \eqref{eq:backdef}, we have for a Wiener process $\{\vW_u\}_{u \ge 0}$ that
	\[\dd \backvx_u = |(\tau^{-1})'(u)| \bigl(\backvx_u + 2\nabla \log \backpi_u(\backx_u)\bigr) \,  \dd u + \sqrt{2|(\tau^{-1})'(u)|}\,  \dd \vW_u\,. \]
\end{lemma}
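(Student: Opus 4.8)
The statement is the standard time-reversal (Anderson) formula for a diffusion, composed with a deterministic time change. I would prove it in two stages: first establish the reverse-time SDE for the OU process on its original clock, then apply the change of variables $u = \tau(t)$ to the resulting process.

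\emph{Stage 1: time reversal.} Fix a horizon $T$ and consider the OU process $\dd \vx_t = \vx_t\,\dd t + \sqrt 2\,\dd \vB_t$ (in the excerpt's normalization) with $\vx_0 \sim \pi$, so $\pi_t$ is the law of $\vx_t$. Define $\vy_s := \vx_{T-s}$. The claim is that $\{\vy_s\}_{s \in [0,T]}$ solves $\dd \vy_s = \bigl(-\vy_s - 2\nabla\log\pi_{T-s}(\vy_s)\bigr)\dd s + \sqrt2\,\dd\wt{\vB}_s$ for some Wiener process $\wt{\vB}_s$. The cleanest self-contained route is to verify that this candidate reverse SDE has the correct marginals: if $q_s$ denotes the law of $\vy_s$, then $q_s = \pi_{T-s}$, so one checks via Lemma~\ref{lem:fokker_planck} that $\partial_s q_s = -\nabla\cdot((-\vx - 2\nabla\log\pi_{T-s})q_s) + \Delta q_s$ is consistent with $\partial_s \pi_{T-s} = -\partial_t\pi_t|_{t=T-s}$, where $\partial_t\pi_t = -\nabla\cdot(\vx\pi_t) + \Delta\pi_t$ from the forward Fokker--Planck equation. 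Substituting $\nabla\cdot(\nabla\log\pi_t \cdot \pi_t) = \Delta\pi_t$ makes the two expressions match; combined with a uniqueness statement for the SDE (or for the Fokker--Planck PDE), this identifies the reversed process. I would cite \cite{Anderson82} for the rigorous version and present this marginal-matching computation as the heuristic justification, noting the drift on the forward clock is $\vx + 2\nabla\log\pi_{\tau^{-1}(u)}$ evaluated appropriately.

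\emph{Stage 2: the deterministic time change.} With $\backvx_u = \vx_{\tau^{-1}(u)}$, write $t = \tau^{-1}(u)$, so $\dd t = (\tau^{-1})'(u)\,\dd u$ and, since $\tau' < 0$, we have $(\tau^{-1})'(u) < 0$, i.e. $\dd t = -|(\tau^{-1})'(u)|\,\dd u$. As $u$ increases, $t$ decreases, so $\backvx_u$ runs the OU process backwards; its drift picks up the reverse-time drift $-(\vx + 2\nabla\log\pi_t)$ from Stage~1, scaled by $|(\tau^{-1})'(u)|$, giving $|(\tau^{-1})'(u)|(\backvx_u + 2\nabla\log\backpi_u(\backx_u))\,\dd u$. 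For the noise term, a deterministic time change $t \mapsto t(u)$ of a Wiener process rescales the quadratic variation: $\sqrt2\,\dd\vB_t$ becomes $\sqrt2\sqrt{|\dd t/\dd u|}\,\dd\vW_u = \sqrt{2|(\tau^{-1})'(u)|}\,\dd\vW_u$ for a new Wiener process $\vW_u$, by Lévy's characterization (matching the quadratic variation $\int 2|(\tau^{-1})'|\,\dd u$). Assembling these two pieces yields exactly the displayed SDE.

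\emph{Main obstacle.} The genuinely delicate point is Stage~1 — the rigorous justification of Anderson's time-reversal formula, which requires regularity/integrability hypotheses on $\pi$ (e.g. finiteness of relevant Fisher information along the flow) that the lemma leaves implicit. In a survey I would not reprove this; I would state the marginal-matching Fokker--Planck computation as the illuminating content and defer rigor to \cite{Anderson82}. Stage~2 is routine once one is careful about the sign of $(\tau^{-1})'$ and about invoking Lévy's characterization (rather than naively "substituting") to produce the new Wiener process $\vW_u$.
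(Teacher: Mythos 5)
Your two-stage plan (Anderson's time reversal, then a deterministic time change) is the right conceptual decomposition and is morally equivalent to what the paper does, but as written the argument does not close because of several compounding sign errors. First, Anderson's formula for the reverse of $\dd\vx_t = b(\vx_t)\,\dd t + \sigma\,\dd\vB_t$ gives reverse drift $-b + \sigma^2\nabla\log\pi_{T-s}$, not $-b - \sigma^2\nabla\log\pi_{T-s}$; with your claimed drift the Fokker--Planck check you outline does not in fact match (the left side produces $\nabla\cdot(\vx\pi_t) + 3\Delta\pi_t$ while $-\partial_t\pi_t = \nabla\cdot(\vx\pi_t) - \Delta\pi_t$, so the substitution $\nabla\cdot(\nabla\log\pi_t\cdot\pi_t) = \Delta\pi_t$ does \emph{not} reconcile them). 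Second, you follow the lemma's stated forward drift $+\vx_t$, but this is a typo: the proof (and the definition \eqref{eq:ou_process} that Lemma~\ref{lem:timechange} is reversing) uses drift $-\vx_t$. With the lemma's stated forward drift and the correct Anderson formula you would obtain reverse drift $-\vy_s + 2\nabla\log\pi_{T-s}$, which after the time change gives $|(\tau^{-1})'(u)|(-\backvx_u + 2\nabla\log\backpi_u)$ --- not the lemma's conclusion. Third, your Stage~2 assembly silently drops a minus sign: you write that scaling the (claimed) reverse drift $-(\vx + 2\nabla\log\pi_t)$ by $|(\tau^{-1})'(u)|$ yields $+|(\tau^{-1})'(u)|(\backvx_u + 2\nabla\log\backpi_u)$, which is not a scaling. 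The clean way to do Stage~2 is to note $\backvx_u = \vy_{s(u)}$ with $s(u) = T - \tau^{-1}(u)$, which is \emph{increasing} with $s'(u) = |(\tau^{-1})'(u)|$, so the already-reversed drift gets multiplied by the positive factor $|(\tau^{-1})'(u)|$ and the noise by $\sqrt{|(\tau^{-1})'(u)|}$, with no further sign flip. With forward drift $-\vx_t$ and the correct Anderson sign this recovers the lemma exactly.

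On the comparison of approaches: the paper avoids the bookkeeping pitfalls above by never splitting into two stages. It rewrites the forward Fokker--Planck equation once, $\partial_t\pi_t = \nabla\cdot((\vx + 2\nabla\log\pi_t)\pi_t) - \Delta\pi_t$, so that it is already in a form that can be read as a Fokker--Planck equation run backwards, and then performs the substitution $u = \tau(t)$ directly at the PDE level: $\partial_u\backpi_u = -(\tau^{-1})'(u)\,\partial_t\pi_t|_{t=\tau^{-1}(u)}$. The time reversal and the time change are absorbed together into the single factor $|(\tau^{-1})'(u)|$, and the SDE is then read off from the resulting PDE via Lemma~\ref{lem:fokker_planck}. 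Your decomposition buys nothing extra here and creates two places where a sign can (and did) go astray; if you keep the two-stage presentation, be explicit that the composite reparametrization $s(u) = T - \tau^{-1}(u)$ is increasing, and double-check the Anderson sign against the forward Fokker--Planck equation.
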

\begin{proof}
	By the Fokker-Planck equation (Lemma~\ref{lem:fokker_planck}), the measures $\{\pi_t\}_{t \ge 0}$ follow the PDE:
	\begin{align*}\partial_t \pi_t (\vx) &= -\nabla \cdot \Par{-\vx\pi_t(\vx)} + \Delta \pi_t(\vx) \\
		&= \nabla \cdot \bigl(\Par{\vx + \nabla \log \pi_t(\vx)}\pi_t(\vx)\bigr) \\
		&= \nabla \cdot \bigl(\Par{\vx + 2\nabla\log \pi_t(\vx)} \pi_t(\vx)\bigr) - \Delta\pi_t(\vx)\,.
	\end{align*}
    Performing a change of variables, we have
    \begin{align*}\partial_u \backpi_{u}(\backvx) &= -|(\tau^{-1})'(u)|\nabla \cdot\Par{(\backvx + 2\nabla \log \backpi_u(\backvx))\backpi_u(\backvx)} + |(\tau^{-1})'(u)| \Delta \backpi_u(\backvx)\,.\end{align*}
    The conclusion follows by inverting the Fokker-Planck equation with respect to the above display.
\end{proof}

We can now introduce our main object of study in this section.

\begin{tcolorbox}[colback=blue!10, colframe=blue!50!black, boxrule=0.5pt, arc=2mm]
\begin{perspective}\label{perspective:diffusion}
Let $\pi_0 \in \calP(\R^d)$ be such that $\E_{\vx \sim \pi_0}[\vx]$ exists. Define a stochastic process $\{\backvx_u\}_{u \ge 0}$ as follows,
where $\{\vW_u\}_{u \ge 0}$ is a Wiener process:
\begin{equation}\label{eq:backxdef}\tag{$\msf{rev}$}
\begin{gathered}
\backvx_0 \sim \Nor(\0_d, \id_d),\\ \dd \backvx_u = \Par{\frac{\backvx_u}{2u(u + 1)} + \frac 1 {u(u + 1)} \nabla \log \backpi_u(\backvx_u)}\, \dd u + \frac 1 {\sqrt{u(u + 1)}}\,\dd \vW_u.
\end{gathered}
\end{equation}
\end{perspective}
\end{tcolorbox}

Note that \eqref{eq:backxdef} is simply the SDE driving the backwards dynamics \eqref{eq:backdef}, according to our derivation in Lemma~\ref{lem:timechange}, under the time change
\begin{equation}\label{eq:our_tau}
\tau(t) = \frac 1 {\exp(2t) - 1},\quad \tau^{-1}(u) = \half\log\Par{\frac{u + 1}{u}},\quad (\tau^{-1})'(u) = -\frac 1 {2u(u + 1)}.
\end{equation}
We remark that \eqref{eq:backxdef} is not the original backwards SDE from \cite{SongSKKEP21}, Eq.\ (6), which reads
\begin{equation}\label{eq:ddpm_orig}\dd \backvx_u = \Par{\backvx_u + 2\nabla \log\backpi_u(\backvx_u)} \, \dd u + \sqrt 2 \, \dd \vW_u\,.\end{equation}
The above SDE is the result of applying Lemma~\ref{lem:timechange} with a time change satisfying $(\tau^{-1})'(u) = -u$. However, because there is no well-defined $\tau: [0, \infty] \to [0, \infty]$ with this property, applications in finite time require cutting off the forwards SDE at a time $T > 0$ such that $\pi_T \approx \pi_\infty = \Nor(\0_d, \id_d)$, and only defining the backwards SDE up to time $T$ (i.e., $\tau(t) = T - t$). To avoid these boundary issues, Perspective~\ref{perspective:diffusion} uses  \eqref{eq:our_tau}, which just gives a reparameterization of \eqref{eq:ddpm_orig} if we restrict to $[0, T]$. 

As a final technical tool, we require Tweedie's formula (cf.\ \cite{Robbins56}) to rewrite the $\nabla \log \backpi$ term, which is often called the \emph{score} of the backwards process.

\begin{lemma}[Tweedie's formula]\label{lem:tweedie}
	Let $\vy \sim \Nor(\vx, \sigma^2 \id_d)$ be the output of $\vx \sim \pi$ passed through a noisy Gaussian channel, let $\pi(\cdot \mid \vy)$ be the posterior distribution, and let $\nu(\vy)$ be the marginal of $\vy$:
	\[\pi(\vx \mid \vy) \propto \exp\Bigl(-\frac 1 {2\sigma^2}\norm{\vx - \vy}_2^2\Bigr) \pi(\vx)\,,\quad \nu(\vy) \propto \int \exp\Bigl(-\frac 1 {2\sigma^2}\norm{\vx - \vy}_2^2\Bigr)\pi(\vx)\, \dd \vx.\]
	Then, 
	$\nabla \log \nu(\vy) = \frac 1 {\sigma^2}(\E_{\pi(\cdot \mid \vy)}[\vx] - \vy)$.
\end{lemma}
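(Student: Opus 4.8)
The plan is to prove the identity by direct differentiation of the marginal density under the integral sign, exploiting the fact that the Gaussian kernel $\exp(-\frac{1}{2\sigma^2}\norm{\vx-\vy}_2^2)$ has a normalization constant $(2\pi\sigma^2)^{d/2}$ that is independent of $\vy$. So I would first write $\nu(\vy) = (2\pi\sigma^2)^{-d/2}\int \exp(-\frac{1}{2\sigma^2}\norm{\vx-\vy}_2^2)\pi(\vx)\,\dd\vx$ with the explicit constant, which makes it a genuine equality rather than a proportionality.

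Next I would compute $\nabla_\vy$ of the kernel: $\nabla_\vy \exp(-\frac{1}{2\sigma^2}\norm{\vx-\vy}_2^2) = \frac{1}{\sigma^2}(\vx-\vy)\exp(-\frac{1}{2\sigma^2}\norm{\vx-\vy}_2^2)$, and then differentiate under the integral to get $\nabla\nu(\vy) = (2\pi\sigma^2)^{-d/2}\int \frac{1}{\sigma^2}(\vx-\vy)\exp(-\frac{1}{2\sigma^2}\norm{\vx-\vy}_2^2)\pi(\vx)\,\dd\vx$. Dividing by $\nu(\vy)$ and recognizing the ratio as an expectation against the posterior $\pi(\cdot\mid\vy) \propto \exp(-\frac{1}{2\sigma^2}\norm{\vx-\vy}_2^2)\pi(\vx)$ yields $\nabla\log\nu(\vy) = \frac{1}{\sigma^2}\E_{\pi(\cdot\mid\vy)}[\vx-\vy] = \frac{1}{\sigma^2}(\E_{\pi(\cdot\mid\vy)}[\vx]-\vy)$, using linearity of expectation and the fact that $\vy$ is deterministic under the posterior.

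The only real subtlety — and the step I would flag as requiring a word of justification rather than a routine calculation — is the interchange of $\nabla_\vy$ and the integral. This is legitimate because, for $\vy$ in a small neighborhood, the integrand and its $\vy$-gradient are dominated by an integrable function of $\vx$: the Gaussian factor decays superpolynomially in $\norm{\vx}$, so $\norm{(\vx-\vy)}\exp(-\frac{1}{2\sigma^2}\norm{\vx-\vy}_2^2)\pi(\vx)$ is uniformly (in local $\vy$) bounded by an integrable envelope, and dominated convergence applies. The same decay guarantees $\E_{\pi(\cdot\mid\vy)}[\vx]$ is finite, so the right-hand side is well-defined. I would state this dominated-convergence justification in one sentence and otherwise present the computation as above; there is no deeper obstacle.
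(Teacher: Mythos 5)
Your proof is correct and follows essentially the same route as the paper's: differentiate the marginal under the integral sign, cancel the $\vy$-independent normalization constant, and identify the resulting ratio as an expectation against the posterior. Your added remark about the dominated-convergence justification for exchanging $\nabla_{\vy}$ with the integral is a reasonable bit of extra rigor the paper elides, but it does not change the substance of the argument.
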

\begin{proof}
	Note that the proportionality constant in $\nu$ does not affect $\nabla \log \nu(\vy)$. Hence,
	\begin{align*}
		\nabla \log \nu(\vy) &= \frac 1 {\int \exp(-\frac 1 {2\sigma^2}\norm{\vx - \vy}_2^2)\pi(\vx)\dd \vx}\int \frac {\vx - \vy}{\sigma^2}\exp\Bigl(-\frac 1 {2\sigma^2}\norm{\vx - \vy}_2^2\Bigr) \pi(\vx) \, \dd \vx \\
		&= \frac 1 {\sigma^2}\Par{\E_{\pi(\cdot \mid \vy)}[\vx] - \vy}\,.\qedhere
	\end{align*}
\end{proof}

Lemma~\ref{lem:tweedie} is remarkably useful in practical applications. Recall from Lemma~\ref{lem:ou_dist} that intermediate distributions in  the forwards and backwards processes are outputs of noisy Gaussian channels, initialized from $\pi$. Further, the backwards SDEs \eqref{eq:backxdef}, \eqref{eq:ddpm_orig} are entirely explicit, except for the score term $\nabla \log \backpi_u$.
Lemma~\ref{lem:tweedie} says that we can approximate this score term by predicting appropriate posterior means $\E_{\pi(\cdot \mid \vy)}$, where $\vy = \backvx_u$ is an intermediate iterate. In practice, this predictor can be learned by minimizing an empirical risk over samples from $\pi$, and modern machine learning models (e.g., deep neural networks) achieve good prediction error. Moreover, fairly strong bounds are available that convert errors arising from score approximation and finite-time discretization to errors in the sampling process, see e.g., a line of work initiated by \cite{ChenCLLSZ23, LeeLT23}.

We conclude by relating Perspective~\ref{perspective:diffusion} to Perspective~\ref{perspective:tilt}, as observed by \cite{Klartag2021Spectral, Montanari23}. 

\begin{theorem}\label{thm:tilt_ddpm}
The processes $\{\vc_u\}_{u \ge 0}$ and $\{\backvx_u\}_{u \ge 0}$ in \eqref{eq:tilt_def}, \eqref{eq:backxdef}, satisfy $\sqrt{u(u + 1)} \backvx_u = \vc_u$.
\end{theorem}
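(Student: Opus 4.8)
The strategy is to set $\vz_u \defeq \sqrt{u(u+1)}\,\backvx_u$ and show that, once the two processes are coupled through a common Wiener process $\{\vW_u\}_{u\ge 0}$, the family $\{\vz_u\}_{u\ge 0}$ solves exactly the SDE \eqref{eq:tilt_def}; then $\vz_u = \vc_u$ follows from well-posedness of \eqref{eq:tilt_def} (cf.\ the Remark after Perspective~\ref{perspective:tilt}). Writing $g(u) \defeq \sqrt{u(u+1)}$, so $g(u)^2 = u(u+1)$ and $g'(u) = \tfrac{2u+1}{2g(u)}$, Itô's lemma (equivalently the product rule, since $g$ is a smooth deterministic scalar) applied to $\vz_u = g(u)\backvx_u$ together with \eqref{eq:backxdef} gives, after using $g(u)/\sqrt{u(u+1)} = 1$, $g(u)/(u(u+1)) = 1/g(u)$, and $\tfrac{2u+1}{2u(u+1)} + \tfrac{1}{2u(u+1)} = \tfrac1u$,
\[
\dd \vz_u \;=\; \Par{\frac{\vz_u}{u} + \frac{1}{g(u)}\,\nabla\log\backpi_u(\backvx_u)}\dd u \;+\; \dd\vW_u ,
\]
with $\vz_0 = \0_d$ (since $\backpi_u\to\Nor(\0_d,\id_d)$ as $u\to 0$, so $\backvx_u$ stays bounded in probability while $g(u)\to 0$).

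The core of the argument is then to rewrite this drift as $\vm_u$. By Lemma~\ref{lem:ou_dist} and the time change \eqref{eq:our_tau}, $\backpi_u = \pi_{\tau^{-1}(u)}$ is the law of $c_u\vx_0 + \sigma_u\vg$, where $\vx_0 \sim \pi = \pi_0$, $\vg\sim\Nor(\0_d,\id_d)$, $c_u = e^{-\tau^{-1}(u)} = \sqrt{u/(u+1)}$, and $\sigma_u^2 = 1 - e^{-2\tau^{-1}(u)} = 1/(u+1)$; one records the identities $c_u^2/\sigma_u^2 = u$, $c_u/\sigma_u^2 = g(u)$, $1/\sigma_u^2 = u+1$. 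Viewing $\backpi_u$ as the $\vy$-marginal of a Gaussian channel with input $c_u\vx_0$ and noise level $\sigma_u$ (i.e.\ absorbing the scaling $c_u$ into the prior), Tweedie's formula (Lemma~\ref{lem:tweedie}) yields $\nabla\log\backpi_u(\vy) = g(u)\,\E[\vx_0\mid\vy] - (u+1)\vy$, where $\E[\vx_0\mid\vy]$ is the mean of the posterior
\[
\pi(\vx_0\mid\vy) \;\propto\; \exp\Par{-\tfrac{1}{2\sigma_u^2}\norm{\vy - c_u\vx_0}_2^2}\pi_0(\vx_0) \;\propto\; \exp\Par{g(u)\inprod{\vy}{\vx_0} - \tfrac u2\norm{\vx_0}_2^2}\pi_0(\vx_0),
\]
the last proportionality dropping the $\vx_0$-independent factor $\exp(-\tfrac{1}{2\sigma_u^2}\norm{\vy}_2^2)$.

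Substituting $\vy = \backvx_u = \vz_u/g(u)$ (so $g(u)\vy = \vz_u$), the $-(u+1)\vy$ term becomes $-\tfrac{u+1}{g(u)^2}\vz_u = -\tfrac{\vz_u}{u}$, so $\tfrac{1}{g(u)}\nabla\log\backpi_u(\backvx_u) = \E[\vx_0\mid\backvx_u] - \tfrac{\vz_u}{u}$; this exactly cancels the singular $\tfrac{\vz_u}{u}$ in the drift of $\vz_u$, leaving $\dd\vz_u = \E[\vx_0\mid\backvx_u]\,\dd u + \dd\vW_u$. Moreover, plugging $g(u)\vy = \vz_u$ into the displayed posterior shows $\pi(\vx_0\mid\backvx_u)\propto\exp(\inprod{\vz_u}{\vx_0} - \tfrac u2\norm{\vx_0}_2^2)\pi_0(\vx_0)$, which is precisely the measure $\pi_u$ of \eqref{eq:tilt_def} with tilt $\vz_u$; hence $\E[\vx_0\mid\backvx_u] = \E_{\pi_u}[\vx] = \vm_u$. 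Therefore $\{\vz_u\}_{u\ge 0}$ satisfies $\vz_0 = \0_d$, $\dd\vz_u = \vm_u\,\dd u + \dd\vW_u$ with $\vm_u$ the mean of $\pi_u(\vx)\propto\exp(\inprod{\vz_u}{\vx} - \tfrac u2\norm{\vx}_2^2)\pi_0(\vx)$, i.e.\ exactly \eqref{eq:tilt_def}, and uniqueness gives $\vz_u = \vc_u$. (As a sanity check, the computation $g(u)c_u = u$, $g(u)^2\sigma_u^2 = u$ shows $\vz_u \mid \vx_0 \sim \Nor(u\vx_0, u\id_d)$, matching the marginal law of $\vc_u$ in Perspective~\ref{perspective:posterior}, consistent with Theorem~\ref{thm:tilt_posterior}.)

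I expect the main obstacle to be purely organizational: keeping the algebra among $u$, $u+1$, and $g(u) = \sqrt{u(u+1)}$ straight so that the cancellation of the singular $\vz_u/u$ drift is transparent, and handling the boundary $u\to 0$, where the individual coefficients of the $\vz_u$-SDE blow up even though the net drift $\vm_u$ and the process $\vz_u$ remain well-behaved. A minor point to state cleanly is that applying Tweedie's formula to a channel whose input is the rescaled variable $c_u\vx_0$ (rather than $\vx_0$) is legitimate because the scaling can be folded into the prior; equivalently one can differentiate $\log\backpi_u$ directly.
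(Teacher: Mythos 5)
Your proof is correct and takes essentially the same approach as the paper: the same time-change algebra, the same use of Tweedie's formula (Lemma~\ref{lem:tweedie}) on the rescaled Gaussian channel, and the same identification of the posterior $\pi(\vx_0\mid\backvx_u)$ with the tilted measure of \eqref{eq:tilt_def}. The only organizational difference is direction — you push $\backvx_u$ forward to $\vz_u$ and invoke SDE uniqueness, while the paper transforms $\vc_u\to\backvx_u$ and matches two derived SDEs for $\backvx_u$ — but the algebraic content is identical, and your explicit appeal to well-posedness to close the argument is a clean (and slightly more careful) finish.
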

\begin{proof}
First, consider the SDE \eqref{eq:tilt_def}. Performing the change of variables $\backvx_u = \frac 1{\sqrt{u(u + 1)}} \vc_u$,
\begin{equation}\label{eq:rescaled_sde}
\begin{aligned}
\dd \backvx_u &= \Par{\frac{\dd}{\dd u} \frac{1}{\sqrt{u(u + 1)}}} \vc_u \, \dd u + \frac 1 {\sqrt{u(u + 1)}} \, \dd \vc_u \\
&= -\frac{2u + 1}{2(u(u + 1))^{\frac 3 2}} \vc_u \, \dd u + \frac 1 {\sqrt{u(u + 1)}} \vm_u \, \dd u + \frac 1 {\sqrt{u(u + 1)}} \, \dd \vW_u \\
&= -\frac{2u + 1}{2u(u + 1)} \backvx_u \, \dd u + \frac 1 {\sqrt{u(u + 1)}} \vm_u \, \dd u + \frac 1 {\sqrt{u(u + 1)}} \, \dd \vW_u ,
\end{aligned}
\end{equation}
where $\vm_u$ is the mean of the measure $\propto \exp(\vc_u^\top \vx - \frac u 2 \norm{\vx}_2^2) \pi_0(\vx)$, as in \eqref{eq:tilt_def}. 

On the other hand, observe that $\backvx_u$ in \eqref{eq:backxdef} is distributed as $\vx_t$ in \eqref{eq:ou_process}, for $t = \half\log(\frac{u + 1}{u})$. By Lemma~\ref{lem:ou_dist}, we have $\vx_t \sim \Nor(\exp(-t)\vx_0, (1 - \exp(-2t))\id_d)$. Now Tweedie's formula (Lemma~\ref{lem:tweedie}), applied with $\vy \gets \vx_t$, $\vx \gets \exp(-t)\vx_0$, and $\sigma^2 \gets 1 - \exp(-2t)$, gives
\begin{equation}\label{eq:score_calc}
	\begin{aligned}\nabla \log \pi_t(\vx_t) &= \frac 1 {1 - \exp(-2t)}\Par{\E_{\pi_t}[\exp(-t) \vx_0] - \vx_t} \\
		\implies \nabla \log \backpi_{u}(\backvx_u) &=  \sqrt{u(u + 1)}\E_{\backpi_u}\Brack{\vx_0} - (u + 1)\backvx_u .
	\end{aligned}
\end{equation}

Combining \eqref{eq:backxdef} and \eqref{eq:score_calc} yields
\begin{align}\label{eq:x-back-ito}
\begin{aligned}\dd \backvx_u &= \Bigl(\frac{\backvx_u}{2u(u + 1)} + \frac 1 {\sqrt{u(u + 1)}} \E_{\backpi_u}\Brack{\vx_0} - \frac {\backvx_u} u \Bigr) \dd u + \frac 1 {\sqrt{u(u + 1)}} \, \dd \vW_u \\
	&= \Bigl(-\frac{2u + 1}{2u(u + 1)} \backvx_u + \frac 1 {\sqrt{u(u + 1)}} \E_{\backpi_u}\Brack{\vx_0} \Bigr)\, \dd u + \frac 1 {\sqrt{u(u + 1)}} \, \dd \vW_u.
\end{aligned}
\end{align}
To connect \eqref{eq:rescaled_sde} and \eqref{eq:x-back-ito}, recall that $\backpi_u$ is the density of $\Nor(\exp(-t)\vx_0, (1 - \exp(-2t))\id_d) = \Nor(\sqrt{\frac u {u + 1}}\vx_0, \frac 1 {u + 1} \id_d)$, so under $\backpi_u$, the posterior distribution that $\vx_0$ follows (cf.\ Lemma~\ref{lem:tweedie}) is
\[\propto \exp\Par{-\frac{u + 1}{2}\norm{\sqrt{\frac u {u + 1}}\vx_0 - \backvx_u}_2^2}\pi_0(\vx_0) \propto \exp\Par{\vc_u^\top \vx_0 - \frac u 2 \norm{\vx_0}_2^2} \pi_0(\vx_0),\]
which agrees with the distribution used to compute $\vm_u$ in \eqref{eq:rescaled_sde} as desired.
\end{proof}
\section{Renormalization}\label{sec:renormalization}

We present another perspective called renormalization \cite{WilsonK74, Polchinski84}.  The idea is to decompose a measure $\pi_0$ via infinitesimal convolutions, and describe how so-called ``renormalized potentials'' evolve as a Markov process. We follow the recent presentation of \cite{BauerschmidtBD24}.

In this section, we use $\sigma, \tau$ to denote times in $[0, 1]$, and $\calF(\R^d)$ denotes the set of smooth compactly-supported measurable functions on $\R^d$.\footnote{An extended discussion on Markov semigroup theory is beyond our scope. The results here extend beyond the class $\calF(\R^d)$ defined here via approximation; we refer the reader to \cite{BakryGL13} for additional background.} We now introduce our renormalized potentials and measures $\{V_\tau, \nu_\tau\}_{\tau \in [0, 1]}$, and the Polchinski semigroup
$\{\sfP_{\sig, \tau}\}_{0 \le \sig \le \tau \le 1}$ that they induce.

\begin{tcolorbox}[colback=blue!10, colframe=blue!50!black, boxrule=0.5pt, arc=2mm]
\begin{perspective}\label{perspective:renorm}
Let $\pi_0 \in \calP(\R^d)$ be such that $\E_{\vx \sim \pi_0}[\vx]$ exists. Define a family of \emph{renormalized potentials and measures} as follows: let $V_1(\vx) \defeq -\log\pi_0(\vx) - \half\norm{\vx}_2^2$, and
\begin{equation}\label{eq:renorm_def}\tag{$\msf{renorm}$}
\begin{aligned}
V_\tau(\vx) &\defeq -\log\Par{\E_{\vz \sim \Nor(\0_d, (1 - \tau)\id_d)}\Brack{\exp\Par{-V_1(\vx + \vz)}}},\\
\nu_\tau(\vx) &\propto \exp\Par{-V_\tau(\vx) - \frac 1 {2\tau}\norm{\vx}_2^2},\text{ for all } \tau \in [0, 1].
\end{aligned}
\end{equation}
Also, define the induced \emph{Polchinski semigroup}: for all $0 \le \sig \le \tau \le 1$, let
\begin{equation}\label{eq:sg_def}\tag{$\msf{PSG}$}
\sfP_{\sig, \tau} f(\vx) \defeq \exp\Par{V_\sig(\vx)} \E_{\vz \sim \Nor(\0_d, (\tau - \sig)\id_d)}\Brack{\exp\Par{-V_\tau(\vx + \vz)} f(\vx + \vz)}.
\end{equation}
\end{perspective}
\end{tcolorbox}

It is clear that $\nu_1 = \pi_0$ and $\nu_0= \delta_{\0_d}$ is a Dirac measure by weak continuity.

We next clarify the relationship between \eqref{eq:renorm_def} and \eqref{eq:sg_def}. Intuitively, $\sfP_{\sig, \tau}$ advances time from $\sig$ to $\tau$ (in the sense described in Section~\ref{ssec:prelims}), and the corresponding densities are $\nu_\sig$, $\nu_\tau$.

\begin{lemma}\label{lem:semigroup}
Following notation in Perspective~\ref{perspective:renorm}, we have that $\sfP_{\rho, \sig} \sfP_{\sig, \tau} = \sfP_{\rho, \tau}$ for all $0 \le \rho \le \sig \le \tau \le 1$. Moreover, $\sfP^*_{\sig, \tau} \nu_\sig = \nu_\tau$ for all $0 \le \sig \le \tau \le 1$, in the sense that
\begin{equation}\label{eq:p_adjoint}\E_{\nu_\tau}\Brack{f} = \E_{\nu_\sig}\Brack{\sfP_{\sig, \tau} f} \text{ for all } f \in \calF(\R^d).\end{equation}
\end{lemma}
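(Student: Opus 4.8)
The plan is to reduce everything to the convolution semigroup structure of centered Gaussians, $\Nor(\0_d, a\id_d) * \Nor(\0_d, b\id_d) = \Nor(\0_d, (a+b)\id_d)$, together with a single completion of the square. \textbf{Step 1 (a consistency identity for the potentials).} First I would record that for all $0 \le \sig \le \tau \le 1$,
\[
\exp\Par{-V_\sig(\vx)} = \E_{\vz \sim \Nor(\0_d, (\tau - \sig)\id_d)}\Brack{\exp\Par{-V_\tau(\vx + \vz)}}.
\]
This follows from \eqref{eq:renorm_def}: write $\exp(-V_\sig(\vx)) = \E_{\vw \sim \Nor(\0_d, (1 - \sig)\id_d)}[\exp(-V_1(\vx + \vw))]$, decompose $\vw = \vz + \vz'$ into independent draws $\vz \sim \Nor(\0_d, (\tau-\sig)\id_d)$ and $\vz' \sim \Nor(\0_d, (1-\tau)\id_d)$ (valid since $(\tau-\sig) + (1-\tau) = 1-\sig$), and recognize the inner expectation over $\vz'$ as $\exp(-V_\tau(\vx+\vz))$. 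Taking $f \equiv \mathbf 1$ in \eqref{eq:sg_def}, this says exactly that $\sfP_{\sig,\tau}\mathbf 1 \equiv 1$, i.e.\ the Polchinski semigroup preserves constants; I use this at the end of Step 3.

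\textbf{Step 2 (semigroup property).} Expanding $\sfP_{\rho,\sig}(\sfP_{\sig,\tau}f)(\vx)$ by applying \eqref{eq:sg_def} twice, the factor $\exp(-V_\sig(\vx + \vz_1))$ produced by the outer operator cancels against the $\exp(V_\sig(\vx + \vz_1))$ produced when the inner operator $\sfP_{\sig,\tau}f$ is evaluated at $\vx + \vz_1$. What remains is $\exp(V_\rho(\vx))$ times a double expectation over independent increments $\vz_1 \sim \Nor(\0_d,(\sig-\rho)\id_d)$ and $\vz_2 \sim \Nor(\0_d,(\tau-\sig)\id_d)$ of $\exp(-V_\tau(\vx+\vz_1+\vz_2))f(\vx+\vz_1+\vz_2)$; collapsing $\vz_1 + \vz_2$ into a single draw from $\Nor(\0_d,(\tau-\rho)\id_d)$ recovers exactly $\sfP_{\rho,\tau}f(\vx)$.

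\textbf{Step 3 (transport identity).} For \eqref{eq:p_adjoint}, write $\nu_\sig = Z_\sig^{-1}\exp(-V_\sig - \tfrac{1}{2\sig}\norm{\cdot}_2^2)$ and expand $\E_{\nu_\sig}[\sfP_{\sig,\tau}f]$ directly using \eqref{eq:sg_def}. Again $\exp(-V_\sig(\vx))$ cancels $\exp(V_\sig(\vx))$, leaving (up to the Gaussian normalization for $\vz$ and the factor $Z_\sig^{-1}$) a double integral of $\exp(-V_\tau(\vx+\vz))f(\vx+\vz)$ against $\exp(-\tfrac{1}{2\sig}\norm{\vx}_2^2 - \tfrac{1}{2(\tau-\sig)}\norm{\vz}_2^2)$. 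Substituting $\vy = \vx + \vz$ decouples the variables: completing the square in $\vx$ rewrites the exponent as $\tfrac{\tau}{2\sig(\tau-\sig)}\norm{\vx - \tfrac{\sig}{\tau}\vy}_2^2 + \tfrac{1}{2\tau}\norm{\vy}_2^2$, and integrating out $\vx$ leaves $\exp(-V_\tau(\vy) - \tfrac{1}{2\tau}\norm{\vy}_2^2) f(\vy)$ up to an explicit dimensional constant, which computes to $(\sig/\tau)^{d/2} Z_\tau / Z_\sig$. Hence $\E_{\nu_\sig}[\sfP_{\sig,\tau}f] = (\sig/\tau)^{d/2}(Z_\tau/Z_\sig)\,\E_{\nu_\tau}[f]$ for every $f$; applying this to $f \equiv \mathbf 1$ and using $\sfP_{\sig,\tau}\mathbf 1 \equiv 1$ from Step 1 (with $\E_{\nu_\sig}[\mathbf 1] = \E_{\nu_\tau}[\mathbf 1] = 1$) forces $(\sig/\tau)^{d/2} Z_\tau / Z_\sig = 1$, which yields \eqref{eq:p_adjoint}.

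The computations are all elementary; the only place requiring care is the bookkeeping of Gaussian normalizing constants in Step 3, and I would deliberately avoid verifying $Z_\tau \propto \tau^{d/2}$ by hand, instead letting the clean $f \equiv \mathbf 1$ identity pin down the prefactor. I would also note that finiteness of each $Z_\tau$, so that the $\nu_\tau$ are genuine probability measures, is part of the standing assumptions implicit in Perspective~\ref{perspective:renorm}.
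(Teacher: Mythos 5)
Your proposal is correct, and Step~2 (the semigroup property) is essentially identical to the paper's argument: expand the double composition, cancel the $\exp(\pm V_\sig)$ factors, and collapse the two independent Gaussian increments into one draw from $\Nor(\0_d,(\tau-\rho)\id_d)$.

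Step~3 takes a genuinely different route to the transport identity. The paper never writes down a single completed square: it first observes that $Z_\tau = (2\pi\tau)^{d/2}\exp(V_0(\0_d))$ (by the same Gaussian convolution trick used in Step~1), so that $\E_{\nu_\tau}[f] = \sfP_{0,\tau}f(\0_d)$, and then deduces the adjoint identity purely from the semigroup property by routing through time zero: $\E_{\nu_\tau}[f] = \sfP_{0,\tau}f(\0_d) = \sfP_{0,\sig}\sfP_{\sig,\tau}f(\0_d) = \E_{\nu_\sig}[\sfP_{\sig,\tau}f]$. Choosing $\sig=0$ as the anchor makes the Gaussian on the $\vx$-variable a Dirac mass, which is what lets the paper avoid the explicit change of variables and square-completion. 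Your approach instead expands $\E_{\nu_\sig}[\sfP_{\sig,\tau}f]$ directly, substitutes $\vy=\vx+\vz$, completes the square to obtain a factor of $(\sig/\tau)^{d/2}Z_\tau/Z_\sig$ times $\E_{\nu_\tau}[f]$, and then uses $\sfP_{\sig,\tau}\mathbf 1 \equiv 1$ (your Step~1) to force the prefactor to $1$. Both are valid; the paper's is slightly more economical since it reuses Step~2 and sidesteps the bookkeeping of Gaussian normalizers, while yours is more self-contained for the adjoint step and makes the Gaussian structure explicit. Note that Step~1 also serves double duty in the paper's approach — the identity $\exp(-V_\sig) = \E_{\vz\sim\Nor(\0_d,(\tau-\sig)\id_d)}[\exp(-V_\tau(\cdot+\vz))]$ is exactly what the paper uses (with $\sig=0$, $\tau$ general) to evaluate $Z_\tau$.

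One small omission: your Step~3 computation has a $1/(2\sig)$ in the exponent and hence formally requires $\sig > 0$. The statement asserts the adjoint identity for all $0\le\sig\le\tau\le1$, where $\nu_0=\delta_{\0_d}$ is taken by weak continuity. Your argument covers $\sig>0$, and the $\sig=0$ case reduces to $\E_{\nu_0}[\sfP_{0,\tau}f]=\sfP_{0,\tau}f(\0_d)=\E_{\nu_\tau}[f]$, which is precisely the identity the paper establishes en route; you should mention this base case separately, or take a limit $\sig\searrow 0$.
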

\begin{proof}
By applying \eqref{eq:sg_def} twice,
\begin{align*}
    \sfP_{\rho, \sig} \sfP_{\sig, \tau}f(\mathbf{x})  &= \exp\Par{V_\rho(\vx)}\mathbb{E}_{\mathbf{y}\sim \mathcal{N}(\0_d, (\sigma-\rho)\id_d)}\left[\exp\Par{-V_\sigma(\vx+\mathbf{y})} \exp\Par{V_{\sigma}(\vx+\mathbf{y})}\right. \\
    &\left. \quad\cdot \mathbb{E}_{\vz\sim \mathcal{N}(\0_d, (\tau-\sigma)\id_d)}\Brack{\exp\Par{-V_\tau(\vx+\mathbf{y}+\vz)}f(\vx+\mathbf{y}+\vz)}\right]\\
    &= \exp\Par{V_{\rho}(\mathbf{x})}\mathbb{E}_{\mathbf{y}\sim \mathcal{N}(\0_d, (\sigma-\rho)\id_d)}\Brack{\mathbb{E}_{\vz\sim \mathcal{N}(\0_d, (\tau-\sigma)\id_d)}\Brack{\exp\Par{-V_\tau(\vx+\mathbf{y}+\vz)}f(\vx+\mathbf{y}+\vz)}}\\
    &= \exp\Par{V_{\rho}(\vx)}\mathbb{E}_{\mathbf{z}\sim \mathcal{N}(\0_d, (\tau-\rho)\id_d)}\Brack{\exp\Par{-V_\tau(\vx+\vz)}f(\vx+\vz)} = \sfP_{\rho, \tau}f(\vx),
\end{align*}
where we use the fact that if $\vx_1\sim \mathcal{N}(\0_d, \msig_1), \vx_2\sim \mathcal{N}(\0_d, \msig_2)$ then $\vx_1+\vx_2\sim \mathcal{N}(\0_d, \msig_1+\msig_2)$.

Next, note that for all $\tau \in [0, 1]$,
\begin{align*}
    (2\pi\tau)^{-\frac d 2}\int \exp\Par{-V_\tau(\vx) - \frac 1 {2\tau}\norm{\vx}_2^2} \, \dd \vx &= \mathbb{E}_{\mathbf{x}\sim\mathcal{N}(\0_d, \tau \id_d)}\Brack{\exp\Par{-V_\tau(\mathbf{x})}}\\
    &= \mathbb{E}_{\mathbf{x}\sim\mathcal{N}(\0_d, \tau \id_d)}\Brack{\mathbb{E}_{\vz\sim\mathcal{N}(\0_d, (1-\tau)\id_d)}\Brack{\exp\Par{-V_1(\vx+\vz)}} }\\
    &=\mathbb{E}_{\vz\sim \mathcal{N}(\0_d, \id_d)}\Brack{\exp\Par{-V_1(\vz)}} =\exp\Par{-V_0(\0_d)},
\end{align*}
so the normalizing constant of $\nu_\tau$ is $(2\pi\tau)^{\frac d 2}\exp\Par{V_0(\0_d)}$. Thus, for all $f \in \calF(\R^d)$,
\begin{equation}\label{eq:mean_f}
\begin{aligned}
\sfP_{0,\tau} f(\0_d) 
&= \exp\Par{V_0(\0_d)} \cdot \mathbb{E}_{\vz\sim \mathcal{N}(\0_d, \tau \id_d)}\Brack{\exp(-V_\tau(\vz))f(\vz)}\\
&=\frac{(2\pi\tau)^{\frac d 2}}{\int \exp\Par{-V_\tau(\vz) -\frac{1}{2\tau}\norm{\vz}^2 }\,\D\vz} \cdot \frac{\int \exp\Par{-V_\tau(\vz) -\frac{1}{2\tau}\norm{\vz}^2 }f(\vz)\,\D\vz }{(2\pi\tau)^{\frac d 2}} = \mathbb{E}_{\nu_\tau}[f].
\end{aligned}
\end{equation}
The second claim now follows as
\[\mathbb{E}_{\nu_\tau}[f]=\sfP_{0, \tau}f(\0_d)=\sfP_{0, \sigma}\sfP_{\sigma, \tau}f(\0_d)=\mathbb{E}_{\nu_\sigma}\Brack{\sfP_{\sigma, \tau}f}.\qedhere\]
\end{proof}

We next describe $\{\sfP_{\sig, \tau}\}_{0 \le \sig \le \tau \le 1}$ via its infinitesimal generators $\{\sfL_\tau\}_{\tau \in [0, 1]}$.

\begin{lemma}\label{lem:polchinski_gen}
Define a family of operators $\{\sfL_\tau\}_{\tau \in [0, 1]}$ that act on $f \in \calF(\R^d)$ via
\begin{equation}\label{eq:polchinski_gen_def}\sfL_\tau f = \half \Delta f - \inprod{\nabla V_\tau}{\nabla f}, \text{ for all } \tau \in [0, 1].\end{equation}
Then, we have for all $0 \le \sig \le \tau \le 1$ and $f \in \calF(\R^d)$ that
\begin{equation}\label{eq:ltau_polchinski}\partial_\sig \sfP_{\sig, \tau} f = -\sfL_\sig \sfP_{\sig, \tau} f,\quad \partial_\tau \sfP_{\sig, \tau} f = \sfP_{\sig,\tau} \sfL_\tau  f,\quad 
\partial_\tau\mathbb{E}_{\nu_\tau}[f]=\mathbb{E}_{\nu_\tau}[\sfL_\tau f].\end{equation}
\end{lemma}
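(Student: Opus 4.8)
The plan is to isolate one analytic fact — the \emph{Polchinski equation} satisfied by the potentials $V_\tau$ — and then feed it into a direct differentiation of the semigroup formula \eqref{eq:sg_def}.

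\textbf{Step 1: the Polchinski equation.} The definition \eqref{eq:renorm_def} says exactly that $\exp(-V_\tau)$ is heat flow applied to $\exp(-V_1)$: writing $q_s$ for the density of $\Nor(\0_d, s\id_d)$, we have $\exp(-V_\tau) = q_{1-\tau} * \exp(-V_1)$, and $s \mapsto q_s * \exp(-V_1)$ solves $\partial_s u = \tfrac12\Delta u$. Hence $\partial_\tau \exp(-V_\tau) = -\tfrac12 \Delta \exp(-V_\tau)$; expanding both sides via $\nabla\exp(-V_\tau) = -\exp(-V_\tau)\nabla V_\tau$ and $\Delta\exp(-V_\tau) = \exp(-V_\tau)(\norm{\nabla V_\tau}_2^2 - \Delta V_\tau)$ and cancelling $\exp(-V_\tau)$ gives
\[ \partial_\tau V_\tau = \tfrac12 \norm{\nabla V_\tau}_2^2 - \tfrac12 \Delta V_\tau. \]
This is the only place the precise form of \eqref{eq:renorm_def} enters.

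\textbf{Step 2: the two PDEs for $\sfP_{\sig,\tau}f$.} Write $\sfP_{\sig,\tau}f = \exp(V_\sig)\,F_{\sig,\tau}$, where $F_{\sig,\tau} \defeq q_{\tau-\sig} * (\exp(-V_\tau)f) = \exp(-V_\sig)\sfP_{\sig,\tau}f$. For $\partial_\tau$: only $q_{\tau-\sig}$ and $\exp(-V_\tau)$ depend on $\tau$, so using $\partial_s(q_s * h) = \tfrac12\Delta(q_s*h)$, the heat equation from Step~1, and the product-rule identity $\tfrac12\Delta(gf) - \tfrac12 f\Delta g = g(\tfrac12\Delta f - \inprod{\nabla V_\tau}{\nabla f}) = g\,\sfL_\tau f$ for $g = \exp(-V_\tau)$ (using $\nabla g = -g\nabla V_\tau$), one obtains $\partial_\tau F_{\sig,\tau} = q_{\tau-\sig}*(\exp(-V_\tau)\sfL_\tau f)$, i.e.\ $\partial_\tau \sfP_{\sig,\tau}f = \sfP_{\sig,\tau}\sfL_\tau f$. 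For $\partial_\sig$: there is now an extra term from differentiating the prefactor $\exp(V_\sig)$; combining $\partial_\sig F_{\sig,\tau} = -\tfrac12\Delta F_{\sig,\tau}$ with the expansion of $\exp(V_\sig)\Delta(\exp(-V_\sig)P)$ for $P \defeq \sfP_{\sig,\tau}f$, the terms proportional to $P$ collect into $(\partial_\sig V_\sig - \tfrac12\norm{\nabla V_\sig}_2^2 + \tfrac12\Delta V_\sig)P$, which vanishes by the Polchinski equation, leaving $\partial_\sig \sfP_{\sig,\tau}f = \inprod{\nabla V_\sig}{\nabla P} - \tfrac12\Delta P = -\sfL_\sig \sfP_{\sig,\tau}f$. (Equivalently, one can first check by Taylor expansion of \eqref{eq:sg_def} that $\sfL_\tau$ in \eqref{eq:polchinski_gen_def} is the generator \eqref{eq:generator_tau} — again the Polchinski equation kills a spurious zeroth-order term — and then read off both PDEs from Lemma~\ref{lem:semigroup} via the standard differencings $\tfrac1\eta(\sfP_{\sig,\tau+\eta} - \sfP_{\sig,\tau})f = \sfP_{\sig,\tau}\tfrac1\eta(\sfP_{\tau,\tau+\eta} - \id)f$ and $\tfrac1\eta(\sfP_{\sig+\eta,\tau} - \sfP_{\sig,\tau})f = \tfrac1\eta(\id - \sfP_{\sig,\sig+\eta})\sfP_{\sig+\eta,\tau}f$.)

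\textbf{Step 3: the $\nu_\tau$ identity.} This is immediate from the formula $\E_{\nu_\tau}[f] = \sfP_{0,\tau}f(\0_d)$ already established in \eqref{eq:mean_f}: differentiating in $\tau$ and applying the $\partial_\tau$ PDE with $\sig = 0$ gives $\partial_\tau \E_{\nu_\tau}[f] = (\sfP_{0,\tau}\sfL_\tau f)(\0_d) = \E_{\nu_\tau}[\sfL_\tau f]$, re-using \eqref{eq:mean_f} with $f \gets \sfL_\tau f$.

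\textbf{Expected main obstacle.} All of the conceptual weight is in Step~1 plus the observation that the Polchinski equation is precisely the compatibility condition under which the $f$-multiplying terms cancel in the $\partial_\sig$ computation; everything else is product-rule bookkeeping. The remaining care is in regularity — differentiating under the Gaussian expectation, and ensuring $\sfL_\tau f$ stays within (or is suitably approximated inside) the class $\calF(\R^d)$ so that \eqref{eq:mean_f} applies to it — which I would dispatch via the approximation arguments referenced in the footnote after Perspective~\ref{perspective:renorm}, under e.g.\ a local uniform-in-$\tau$ bound on $\norm{\nabla V_\tau}$.
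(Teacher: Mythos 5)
Your proof is correct and follows essentially the same route as the paper: derive the Polchinski equation from the heat semigroup acting on $\exp(-V_\tau)$, then differentiate the formula \eqref{eq:sg_def} directly and let the Polchinski equation cancel the zeroth-order terms, and finally obtain the $\nu_\tau$ identity from the $\partial_\tau$ PDE via \eqref{eq:mean_f}. The only cosmetic differences are that you write out the $\partial_\tau$ bookkeeping explicitly (the paper writes out $\partial_\sigma$ and says $\partial_\tau$ is "similar"), and you specialize the third claim to the adjoint relation at $\sigma=0$ rather than a generic $\sigma\le\tau$; both are the same argument.
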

\begin{proof}
Let $\gamma_\tau(\vz) \propto \exp(-\frac 1 {2\tau}\norm{\vz}_2^2)$ be the density of $\Nor(\0_d, \tau \id_d)$, and recall the heat equation, $\partial_\tau \gamma_\tau = \half \Delta \gamma_\tau$. Then, letting $(f \ast g)(\vx) \defeq \int f(\vx - \vz) g(\vz)\, \dd \vz$ denote convolution, we have
\begin{equation}\label{eq:heat_conv}
\partial_\tau \Par{f \ast \gamma_{1 - \tau}}(\vx) = \partial_\tau\int f(\vz) \gamma_{1 - \tau}(\vx - \vz)\dd \vz = -\half \Delta (f \ast \gamma_{1 - \tau})(\vx).
\end{equation}
Next, define
$Z_\tau(\vx) \defeq \exp(-V_\tau(\vx)) = \mathbb{E}_{\vz \sim \mathcal{N}(\mathbf{0}_d, (1-\tau)\id_d)}[ \exp(-V_1(\vx - \vz)) ]$, where we used symmetry of $\vz$ in the last equality. Because $Z_\tau = \exp(-V_1) \ast \gamma_{1 - \tau}$, applying \eqref{eq:heat_conv} yields
    \begin{align*}\partial_\tau Z_\tau(\vx) = -\frac{1}{2}\Delta Z_\tau(\vx).\end{align*}
Now applying the chain rule to the definition of $Z_\tau$ yields
\begin{align*}
\nabla Z_\tau = -\nabla V_\tau \exp(-V_\tau) \implies \Delta Z_\tau = \Par{-\Delta V_\tau + \norm{\nabla V_\tau}_2^2}\exp(-V_\tau).
\end{align*}
Combining the above two displays, we derive
\begin{equation}\label{eq:partial_V}
\partial_\tau V_\tau = -\frac{\partial_\tau Z_\tau}{Z_\tau} = \frac{\Delta Z_\tau}{2Z_\tau} = -\half \Delta V_\tau + \half \norm{\nabla V_\tau}_2^2.
\end{equation}
Finally, 
    \begin{align*}
        \partial_\sig \sfP_{\sigma, \tau}f
        &=\left(\partial_\sig V_\sigma\right)\sfP_{\sigma, \tau}f- \half \exp(V_\sig)\Delta\Par{\exp(-V_\sig)\sfP_{\sig, \tau} f}\\
        &=\left(\partial_\sig V_\sigma\right)\sfP_{\sigma, \tau}f - \half \exp(V_\sig)\nabla \cdot \Par{ - \nabla V_\sig\exp(-V_\sig)\sfP_{\sig,\tau} f + \exp(-V_\sig)\nabla \sfP_{\sig,\tau} f} \\
        &=\left( \partial_\sig V_\sigma\right)\sfP_{\sigma, \tau}f + \frac{1}{2}(\Delta V_\sigma)\sfP_{\sigma, \tau}f -\frac{1}{2}\norm{\nabla V_\sigma}_2^2 \sfP_{\sigma, \tau}f-\frac{1}{2}\Delta \sfP_{\sigma, \tau}F+\inprods{\nabla V_\sigma}{\nabla \sfP_{\sigma, \tau}f} \\
        &=-\frac{1}{2}\Delta \sfP_{\sigma, \tau}f+\inprods{\nabla V_\sigma}{\nabla \sfP_{\sigma, \tau}f} =-\sfL_\sigma \sfP_{\sigma, \tau}f,
    \end{align*}
    where we used the chain rule and \eqref{eq:heat_conv} in the first line, and \eqref{eq:partial_V} in the last.
    The claim about $\partial_\tau \sfP_{\sigma, \tau}f$ follows similarly.  For the last claim, applying \eqref{eq:p_adjoint} twice for $\sigma\leq \tau$,
    \[\partial_\tau \mathbb{E}_{\nu_\tau}[f]= \partial_\tau \mathbb{E}_{\nu_\sigma}[\sfP_{\sigma, \tau}f]=\mathbb{E}_{\nu_\sigma}\left[ \partial_\tau \sfP_{\sigma, \tau}f\right]=\mathbb{E}_{\nu_\sigma}[\sfP_{\sigma, \tau}\sfL_\tau f]=\mathbb{E}_{\nu_\tau}[\sfL_\tau f].  \qedhere\]
\end{proof}

The evolution of the renormalized potential $V_\tau$ in \eqref{eq:partial_V} is known as the \emph{Polchinski equation}.  Further, note that \eqref{eq:ltau_polchinski} is consistent with our earlier definition of infinitesimal generators in \eqref{eq:generator_tau}, because the following \emph{Kolmogorov equations} hold: using \eqref{eq:generator_tau} as our definition,
\begin{align*}
\partial_\sig \sfP_{\sig, \tau} f &= \lim_{\eta \searrow 0} \frac{\sfP_{\sig + \eta, \tau} f - \sfP_{\sig, \tau} f}{\eta} = \lim_{\eta \searrow 0} \frac{\sfP_{\sig, \sig + \eta} (-\sfP_{\sig + \eta, \tau} f) - (-\sfP_{\sig + \eta, \tau} f)}{\eta} = -\sfL_\sig \sfP_{\sig, \tau} f, \\
\partial_\tau \sfP_{\sig, \tau} f &= \lim_{\eta \searrow 0} \frac{\sfP_{\sig, \tau + \eta} f - \sfP_{\sig, \tau} f}{\eta} = \sfP_{\sig, \tau} \lim_{\eta \searrow 0} \frac{\sfP_{\tau, \tau + \eta} f - f}{\eta} = \sfP_{\sig, \tau} \sfL_\tau f.
\end{align*}

To complete our exposition of the Polchinski semigroup, we note that the infinitesimal generators $\{\sfL_\tau\}_{\tau \in [0, 1]}$ induce a SDE on particles $\{\vv_\tau \in \R^d\}_{\tau \in [0, 1]}$, such that $\vv_\tau \sim \nu_\tau$.

\begin{lemma}\label{lem:polchinski_sde_derive}
For all $\vv \in \R^d$ and $\tau \in [0, 1]$, define an induced \emph{fluctuation measure}
\begin{equation}\label{eq:fluctuation2}
    \pi_\tau^{\vv}(\mbf{x})\propto \exp\left(\frac{1}{1-\tau}\inprods{\vv}{\mbf{x}}-\frac{\tau}{2(1-\tau)}\norm{\mbf{x}}_2^2\right)\pi_0(\mbf{x}).
\end{equation}
Consider the following SDE, where $\{\vW_\tau\}_{\tau \in [0, 1]}$ is a Wiener process:
\begin{equation}\label{eq:polchinski_sde}
\dd \vv_\tau = -\frac 1 {1 - \tau}(\vv_\tau - \vm_\tau) \, \dd \tau + \dd \vW_\tau, \text{ where } \vm_\tau \defeq \E_{\vx \sim \pi_\tau^{\vv_\tau}}[\vx],\; \mathbf{v}_0=\mathbf{0}_d.
\end{equation}
The infinitesimal generator of \eqref{eq:polchinski_sde} is $\sfL_\tau$ in \eqref{eq:polchinski_gen_def}, and for any $\tau \in [0, 1]$, we have $\vv_\tau \sim \nu_\tau$.
\end{lemma}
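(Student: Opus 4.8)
The plan is to prove the two assertions in order: (i) the infinitesimal generator of \eqref{eq:polchinski_sde} equals $\sfL_\tau$ from \eqref{eq:polchinski_gen_def}, and (ii) $\vv_\tau \sim \nu_\tau$ for every $\tau \in [0,1]$; assertion (i) will reduce to a single ``complete the square'' identity, which is the real content of the lemma, and (ii) will then follow from Fokker--Planck plus a uniqueness argument. For (i), note that the SDE \eqref{eq:polchinski_sde} has unit diffusion coefficient, so by It\^o's lemma (Lemma~\ref{lem:ito}) its generator acts on $f \in \calF(\R^d)$ by $f \mapsto \inprod{\vb_\tau}{\nabla f} + \half\Delta f$, where $\vb_\tau(\vv) \defeq -\tfrac 1{1-\tau}\big(\vv - \E_{\vx \sim \pi_\tau^{\vv}}[\vx]\big)$ is the drift. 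Comparing with \eqref{eq:polchinski_gen_def}, it therefore suffices to prove the pointwise gradient identity
\[\nabla V_\tau(\vv) \;=\; \frac{1}{1-\tau}\Par{\vv - \E_{\vx \sim \pi_\tau^{\vv}}[\vx]}.\]
This also shows that $\vb_\tau$ is an honest function of the current state (not of its law), which is what lets us apply Lemma~\ref{lem:fokker_planck} in step (ii).

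To establish the displayed identity I would start from $Z_\tau \defeq \exp(-V_\tau) = \exp(-V_1)\ast\gamma_{1-\tau}$ (already used in the proof of Lemma~\ref{lem:polchinski_gen}) and substitute $\exp(-V_1(\vw)) = \pi(\vw)\exp(\half\norm{\vw}_2^2)$ from Perspective~\ref{perspective:renorm}. Differentiating under the integral sign via $\nabla\gamma_{1-\tau}(\vu) = -\tfrac{\vu}{1-\tau}\gamma_{1-\tau}(\vu)$ gives $\nabla V_\tau(\vv) = -\nabla\log Z_\tau(\vv) = \tfrac 1{1-\tau}(\vv - \bar{\vw})$, where $\bar{\vw}$ is the mean of $\vw$ under the density proportional to $\gamma_{1-\tau}(\vv - \vw)\,\pi(\vw)\exp(\half\norm{\vw}_2^2)$. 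Completing the square in the exponent and dropping $\vw$-independent factors, this density equals $\propto \exp\Par{\tfrac 1{1-\tau}\inprod{\vv}{\vw} - \tfrac{\tau}{2(1-\tau)}\norm{\vw}_2^2}\pi(\vw)$, which is exactly $\pi_\tau^{\vv}(\vw)$ in \eqref{eq:fluctuation2}; hence $\bar{\vw} = \E_{\vx\sim\pi_\tau^{\vv}}[\vx]$, proving the identity (and, by the reduction above, that the generator is $\sfL_\tau$). This step is entirely routine but carries essentially all the work.

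For (ii): since by (i) the drift of \eqref{eq:polchinski_sde} is the deterministic vector field $-\nabla V_\tau$, the law $\rho_\tau$ of $\vv_\tau$ solves the Fokker--Planck equation (Lemma~\ref{lem:fokker_planck}), whose weak form reads $\partial_\tau \E_{\rho_\tau}[f] = \E_{\rho_\tau}\big[\tfrac 12\Delta f - \inprod{\nabla V_\tau}{\nabla f}\big] = \E_{\rho_\tau}[\sfL_\tau f]$ for all $f\in\calF(\R^d)$. But the last identity in Lemma~\ref{lem:polchinski_gen} shows $\nu_\tau$ satisfies the same weak equation, $\partial_\tau\E_{\nu_\tau}[f] = \E_{\nu_\tau}[\sfL_\tau f]$, and $\rho_0 = \delta_{\0_d} = \nu_0$ since $\vv_0 = \0_d$; so $\rho_\tau = \nu_\tau$ for all $\tau$ by uniqueness for this evolution equation. (Equivalently, (i) identifies $\{\sfP_{\sig,\tau}\}$ as the transition semigroup of \eqref{eq:polchinski_sde}, and then $\vv_\tau \sim \sfP^*_{0,\tau}\delta_{\0_d} = \nu_\tau$ by \eqref{eq:mean_f}.)

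The main obstacle, beyond the completing-the-square identity, is purely technical: well-posedness, uniqueness, and regularity for \eqref{eq:polchinski_sde} are delicate because the coefficient $\tfrac 1{1-\tau}$ blows up as $\tau\to1$, and uniqueness for the weak Fokker--Planck equation must be invoked carefully. I would sidestep this by running the whole argument on $[0,1-\delta]$, where $V_\tau$ is smooth with controlled derivatives so standard existence/uniqueness apply, and then letting $\delta\to0$, using the weak continuity $\nu_\tau\to\nu_1=\pi$ noted after Perspective~\ref{perspective:renorm}; for typical applications one only needs $\tau$ bounded away from $1$ anyway.
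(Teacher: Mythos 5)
Your proof is correct and follows the same overall structure as the paper's: reduce to the pointwise gradient identity $\nabla V_\tau(\vv) = \tfrac{1}{1-\tau}(\vv - \E_{\pi_\tau^{\vv}}[\vx])$ so that the SDE becomes the gradient flow $\dd\vv_\tau = -\nabla V_\tau(\vv_\tau)\,\dd\tau + \dd\vW_\tau$ with generator $\sfL_\tau$, then conclude $\vv_\tau\sim\nu_\tau$ from the semigroup/Fokker--Planck identity. The one place where you diverge is in how you prove the gradient identity: the paper's main line passes the derivative onto $V_1$ to obtain $\nabla V_\tau(\vv) = \E_{\pi_\tau^{\vv}}[\nabla V_1]$ and then invokes the integration-by-parts fact $\E_\mu[\nabla\log\mu]=\0_d$, whereas you differentiate the Gaussian kernel $\gamma_{1-\tau}$ directly (using $\nabla\gamma_{1-\tau}(\vu)=-\tfrac{\vu}{1-\tau}\gamma_{1-\tau}(\vu)$) and then complete the square. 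Your route is exactly the ``alternatively, this can be seen by Tweedie's formula'' remark at the end of the paper's proof, re-proved from scratch; the two derivations are dual Gaussian integration-by-parts computations and are of equal length, with yours arguably slightly more self-contained since it avoids the decay assumption needed for $\E_\mu[\nabla\log\mu]=\0_d$. Your treatment of (ii) is fine --- you give both the Fokker--Planck-plus-uniqueness version and, parenthetically, the paper's semigroup version via \eqref{eq:mean_f} --- and your closing remark about truncating at $\tau = 1-\delta$ is a reasonable way to handle the blow-up that the paper glosses over.
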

\begin{proof}
We first claim that 
\begin{equation}\label{eq:renorm_expect}
\E_{\pi_\tau^{\vv}}[f] = \sfP_{\tau, 1} f(\vv) \text{ for all } f \in \calF(\R^d),
\end{equation}
which follows from
\begin{equation}\label{eq:pt1f}
\begin{aligned}
\sfP_{\tau, 1} f(\vv) &= \frac{\E_{\vz \sim \Nor(\0_d, (1 - \tau)\id_d)}\Brack{\exp\Par{-V_1(\vv + \vz)} f(\vv + \vz)}}{\E_{\vz \sim \Nor(\0_d, (1 - \tau) \id_d)}\Brack{\exp\Par{-V_1(\vv + \vz)}}} \\
&= \frac{\int \exp\Par{-V_1(\vx) - \frac 1 {2(1-\tau)}\norm{\vx - \vv}_2^2} f(\vx) \,\dd \vx}{\int \exp\Par{-V_1(\vx) - \frac 1 {2(1-\tau)}\norm{\vx - \vv}_2^2}\, \dd \vx},
\end{aligned}
\end{equation}
where the second line substituted $\vx = \vv + \vz$. Next, Lemma~\ref{lem:fokker_planck} shows that if
\begin{equation}\label{eq:grad_sde}\D \vv_\tau=-\nabla V_\tau(\vv_\tau)\,\D\tau+\D \vW_\tau,\end{equation}
the generator of \eqref{eq:grad_sde} is precisely $\sfL_\tau$, and from $\sfP^*_{\sigma, \tau}\nu_\sigma=\nu_\tau$ (Lemma~\ref{lem:semigroup}) we obtain the final statement, assuming we can show \eqref{eq:grad_sde} is equivalent to \eqref{eq:polchinski_sde}. To obtain this equivalence, note that
\[\nabla V_\tau(\vv)=\frac{\mathbb{E}_{\vz\sim \mathcal{N}(\0_d, (1-\tau)\id_d)}\Brack{\exp\Par{-V_1(\vv+\vz)}\nabla V_1(\vv+\vz)}}{\mathbb{E}_{\vz\sim \mathcal{N}(\0_d, (1-\tau)\id_d)}\Brack{\exp(-V_1(\vv+\vz))}}=(\sfP_{\tau, 1}\nabla V_1)(\vv)=\mathbb{E}_{\pi_\tau^{\vv}}\Brack{\nabla V_1},\]
where the last equality uses our earlier derivation \eqref{eq:pt1f} coordinatewise. We conclude by recalling that for any measure $\mu$ with sufficiently fast decay, $
\E_{\vx \sim \mu}[\nabla \log \mu(\vx)] = \int \nabla \mu(\vx) \,\dd \vx = \0_d$,
where we integrated by parts coordinatewise. Applying this fact with $\mu = \pi_\tau^{\vv_\tau}$ completes the proof:
\begin{align*}
\E_{\vx \sim \pi_\tau^{\vv_\tau}}\Brack{-\nabla V_1(\vx) + \frac 1 {1-\tau}(\vv_\tau - \vx)} = \0_d \implies \E_{\pi_\tau^{\vv_\tau}}\Brack{\nabla V_1} = \frac 1 {1 - \tau} \Par{\vv_\tau - \vm_\tau}.
\end{align*}
Alternatively, this can be seen by an application of Tweedie's formula (Lemma \ref{lem:tweedie}), noting that $-\nabla V_\tau(\vx)$ is nothing more than $\nabla \log \mu(\vx)$, where
\[\mu(\vx)\propto \int \exp\left(-\frac{1}{2(1-\tau)}\norm{\vx-\vz}^2\right)\exp\Par{-V_1(\vz)}\D\vz, \]
employing a change of variables.
\end{proof}

We are finally ready to connect the Polchinski semigroup to stochastic localization.

\begin{theorem}\label{thm:tilt_renormal}
The processes $\{\vc_t\}_{t \ge 0}$ and $\{\vv_\tau\}_{\tau \in [0, 1]}$ in \eqref{eq:tilt_def}, \eqref{eq:polchinski_sde} satisfy $\vc_{t} = \frac 1 {1-\tau}\vv_\tau$, where $t = \frac \tau {1 - \tau}$, and the induced $\pi_t$ in \eqref{eq:tilt_def} and $\pi_\tau^{\vv_\tau}$ in \eqref{eq:fluctuation2} are identical under this reparameterization.
\end{theorem}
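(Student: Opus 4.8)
The plan is to mimic the proof of Theorem~\ref{thm:tilt_ddpm}: perform the deterministic reparameterization $t = \frac{\tau}{1-\tau}$ (equivalently $\tau = \frac{t}{1+t}$, so $1 - \tau = \frac{1}{1+t}$ and $\frac{\dd \tau}{\dd t} = (1-\tau)^2$) together with the rescaling $\vc_t \defeq \frac{1}{1-\tau}\vv_\tau = (1+t)\vv_\tau$, and verify that this transformation carries solutions of \eqref{eq:polchinski_sde} to solutions of \eqref{eq:tilt_def} and conversely. I would first dispatch the claim about the induced measures, which is purely algebraic: substituting $\vc_t = \frac{1}{1-\tau}\vv_\tau$ and $\frac{t}{2} = \frac{\tau}{2(1-\tau)}$ into the definition of $\pi_t$ in \eqref{eq:tilt_def} (taking $\pi_0 = \pi$) reproduces term-by-term the exponent defining $\pi_\tau^{\vv_\tau}$ in \eqref{eq:fluctuation2}, so $\pi_t = \pi_\tau^{\vv_\tau}$ under the reparameterization, and in particular their means agree, $\vm_t = \E_{\pi_t}[\vx] = \E_{\pi_\tau^{\vv_\tau}}[\vx] = \vm_\tau$. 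It is worth recording this identification up front, since it uses only the change of variables and no dynamics, so it can be invoked when matching the drifts without any circularity.

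For the SDEs, starting from \eqref{eq:polchinski_sde} I would set $\vc_t \defeq (1+t)\vv_{\tau(t)}$. Because the scaling factor $1+t$ is deterministic, It\^o's lemma (Lemma~\ref{lem:ito}) contributes no second-order correction, and $\dd \vc_t = \vv_\tau \, \dd t + (1+t)\,\dd \vv_{\tau(t)}$. Re-expressing $\dd \vv_{\tau(t)}$ in the $t$-variable, the drift acquires the Jacobian $\frac{\dd\tau}{\dd t} = (1-\tau)^2$, while the Brownian increment $\dd \vW_{\tau(t)}$ accumulates quadratic variation at rate $(1-\tau)^2$, hence equals $(1-\tau)\,\dd \vW'_t$ for a standard Wiener process $\vW'$. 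Using $(1+t)(1-\tau) = 1$, the diffusion coefficient collapses to $1$, and the drift telescopes: $\vv_\tau\,\dd t - (1+t)\cdot\frac{1}{1-\tau}(\vv_\tau - \vm_\tau)(1-\tau)^2\,\dd t = \vv_\tau\,\dd t - (\vv_\tau - \vm_\tau)\,\dd t = \vm_\tau\,\dd t = \vm_t\,\dd t$, where the last step invokes the measure identity. With $\vc_0 = \vv_0 = \0_d$ this is exactly \eqref{eq:tilt_def}; the same computation run backwards (or uniqueness of the solution, cf.\ the remark following Perspective~\ref{perspective:tilt}) gives the converse.

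The computation is mechanical, and the only point needing care — not really an obstacle — is the Brownian time-change bookkeeping: making sure the powers of $1-\tau$ coming from the explicit rescaling, from the Jacobian $\frac{\dd\tau}{\dd t}$ in the drift, and from the square-root Jacobian in the diffusion coefficient combine to produce the clean coefficients of \eqref{eq:tilt_def}. One should also note that the map $\tau \mapsto t$ is a bijection from $[0,1)$ onto $[0,\infty)$, with the degenerate endpoint $\tau = 1$ (where $1-\tau$ vanishes) corresponding to $t = \infty$, outside the range of either process.
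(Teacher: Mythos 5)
Your proposal is correct and takes essentially the same approach as the paper's own proof: first equate the measures algebraically to identify $\vm_t$ with $\vm_\tau$, then rescale $\vv_\tau$ by $\frac{1}{1-\tau}$ and perform a time change $t = \frac{\tau}{1-\tau}$. The only cosmetic difference is that the paper rescales in the $\tau$-variable first and then invokes the It\^o time-change formula (Theorem 8.5.1 of \cite{Oksendal00}) as a black box, whereas you carry out the time change of the Wiener increment explicitly; both are valid and equivalent.
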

\begin{proof}
We begin by equating the measures in \eqref{eq:tilt_def} and \eqref{eq:fluctuation2}: starting from \eqref{eq:tilt_def},
\begin{align*}
\pi_t(\vx) \propto \exp\Par{\inprod{\vc_t}{\vx} - \frac t 2 \norm{\vx}_2^2}\pi_0(\vx) 
= \exp\Par{\frac 1 {1-\tau}\inprod{\vv_\tau}{\vx} - \frac \tau {2(1-\tau)} \norm{\vx}_2^2}\pi_0(\vx),
\end{align*}
which matches the definition of $\pi_\tau^{\vv_\tau}$ in \eqref{eq:fluctuation2}. Therefore, $\vm_t$ in \eqref{eq:tilt_def} and $\vm_\tau$ in \eqref{eq:polchinski_sde} have the same definition, under the time change $t = \frac \tau {1 - \tau}$. Next, letting $\vu_\tau \defeq \frac 1 {1 - \tau}\vv_\tau$ where $\vv_\tau$ follows \eqref{eq:polchinski_sde},
\begin{equation}\label{eq:scaled_sde}
\begin{aligned}
\partial_\tau \vu_\tau &= \frac 1 {(1 - \tau)^2} \vv_\tau + \frac 1 {1-\tau}\frac{\partial}{\partial \tau} \vv_\tau = \frac 1 {(1 - \tau)^2} \vm_\tau + \frac 1 {1 - \tau} \dd \vW_\tau.
\end{aligned}
\end{equation}
At this point, applying the time change formula\footnote{This formula can be arrived at via a similar calculation as in Lemma~\ref{lem:timechange}, that changes but does not reverse time.} for It\^o diffusions (cf.\ Theorem 8.5.1, \cite{Oksendal00}), with $\partial_\tau  \frac 1 {1 - \tau} = \frac 1 {(1 - \tau)^2}$ shows equivalence of $\vu_\tau$ in \eqref{eq:scaled_sde} and $\vc_t$ in \eqref{eq:tilt_def}.
\end{proof}

\section{Static Schr\"odinger bridge}\label{sec:schrodinger}

In this section, we place stochastic localization in the context of the Schr\"odinger bridge \cite{Schro1931, Schro1932}. We again restrict time to $\tau \in [0, 1]$, and follow the presentation of \cite{Leonard14} throughout.

We begin by describing a general formulation of the Schr\"odinger bridge optimization problem. 

\begin{tcolorbox}[colback=blue!10, colframe=blue!50!black, boxrule=0.5pt, arc=2mm]
\begin{perspective}\label{perspective:bridge}
Let $\mu, \pi \in \calP(\R^d)$, and let $R \in \calP([0, 1] \times \R^d)$ be a reference \emph{path measure} on $\calC([0, 1] \times \R^d)$. The induced \emph{static Schr\"odinger bridge} problem is:
\begin{equation}\label{eq:schrodinger}\tag{$\msf{SSB}$}
    \inf_{P \in \mathcal P([0, 1] \times \R^d)} \KL{P}{R} 
    \textrm{ such that } P_0 = \mu,\; P_1 = \pi.
\end{equation}
\end{perspective}
\end{tcolorbox}

In other words, we seek the path measure on $\calC([0, 1] \times \R^d)$, with prescribed starting measure $P_0 = \mu$ and ending measure $P_1 = \pi$, that is as close as possible in KL divergence to a given path measure $R$. It may be helpful to keep the running example of $R$ being the Wiener path measure in mind, as we specialize calculations at the end of the section to this case.

We now introduce some helpful notation. When $P \in \calP([0, 1] \times \R^d)$ is a path measure, $P_{01}$ denotes the marginal density of its endpoints $(P_0, P_1)$, and similarly we denote the endpoints of $\vp_{[0, 1]} \in \calC([0, 1] \times \R^d)$ by $\vp_{01} \in \R^d \times \R^d$. For $(\vx, \vy) \in \R^d \times \R^d$, we let $P_{(0, 1) \mid (\vx, \vy)}$ denote the conditional density of $\vp_{(0, 1)} \mid \vp_{01} = (\vx, \vy)$ for $\vp_{[0, 1]} \sim P$, i.e., the intermediate path given the endpoints. 
Finally, we let $\Gamma(\mu, \pi)$ denote the set of \emph{couplings} of $\mu$ and $\pi$, i.e., measures $\gamma$ on $\R^d \times \R^d$ whose left and right marginals, denoted $\gamma_0$ and $\gamma_1$, respectively equal $\mu$ and $\pi$.

The first observation regarding \eqref{eq:schrodinger} is that to find its solution (when it is attainable), it suffices to find the coupling of the marginals $P_0 = \mu$ and $P_1 = \pi$ closest to $R_{01}$ in KL divergence. This coupling is then extended to paths on $[0, 1]$ via matching conditional distributions.

\begin{lemma}\label{lem:endpoints_suffice}
Let $P^\star$ optimally solve \eqref{eq:schrodinger}, and let $\gamma^\star \in \Gamma(\mu, \pi)$ optimally solve
\begin{equation}\label{eq:schro-static}
    \inf_{\gamma \in \Gamma(\mu, \pi)} \KL{\gamma}{R_{01}}.
\end{equation}
Then $P^\star$ and $\gamma^\star$ are related as follows: for all $\vp_{[0, 1]} \in \calC([0, 1] \times \R^d)$ and $(\vx, \vy) \in \R^d \times \R^d$,
\begin{align*}
P^\star_{01} = \gamma^\star,\quad P_{(0, 1) \mid (\vx, \vy)}\Par{\vp_{(0, 1)} \mid \vp_{01} = (\vx, \vy)} = R_{(0, 1) \mid (\vx, \vy)}\Par{\vp_{(0, 1)} \mid \vp_{01} = (\vx, \vy)}.
\end{align*}
\end{lemma}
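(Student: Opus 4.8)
The plan is to prove the lemma by a disintegration argument: any path measure $P$ with $P_0 = \mu$, $P_1 = \pi$ factors as $P(\vp_{[0,1]}) = P_{01}(\vp_{01}) \cdot P_{(0,1)\mid \vp_{01}}(\vp_{(0,1)} \mid \vp_{01})$, and similarly $R(\vp_{[0,1]}) = R_{01}(\vp_{01}) \cdot R_{(0,1)\mid \vp_{01}}(\vp_{(0,1)} \mid \vp_{01})$. Substituting these into $\KL{P}{R}$ and using the chain rule for relative entropy, I would split the divergence into
\[
\KL{P}{R} = \KL{P_{01}}{R_{01}} + \E_{(\vx,\vy) \sim P_{01}}\Brack{\KL{P_{(0,1)\mid(\vx,\vy)}}{R_{(0,1)\mid(\vx,\vy)}}}.
\]
Both terms are nonnegative. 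The constraint $P_0 = \mu$, $P_1 = \pi$ is exactly the statement $P_{01} \in \Gamma(\mu,\pi)$, and it only touches the first term; the second term depends on the choice of conditional law, which is otherwise free. So for any fixed admissible $P_{01} = \gamma \in \Gamma(\mu,\pi)$, the second term is minimized (and set to zero) by choosing $P_{(0,1)\mid(\vx,\vy)} = R_{(0,1)\mid(\vx,\vy)}$ for $\gamma$-a.e.\ $(\vx,\vy)$. Hence the infimum in \eqref{eq:schrodinger} equals $\inf_{\gamma \in \Gamma(\mu,\pi)} \KL{\gamma}{R_{01}}$, which is \eqref{eq:schro-static}, and an optimizer $P^\star$ is obtained by taking $P^\star_{01} = \gamma^\star$ together with the $R$-conditional bridge.

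Concretely, the steps in order: (1) state the disintegration of a path measure into endpoint marginal times conditional-given-endpoints, noting this requires $R_{(0,1)\mid(\vx,\vy)}$ to be well-defined, which holds for the reference measures we care about (e.g.\ Wiener measure, where it is the Brownian bridge); (2) derive the additive chain-rule decomposition of $\KL{P}{R}$ displayed above, being careful that it is valid only when $P \ll R$ — otherwise $\KL{P}{R} = \infty$ and the claimed relation still holds trivially since then $P_{01} \not\ll R_{01}$ or some conditional fails absolute continuity; (3) observe the endpoint constraint is equivalent to $P_{01}\in\Gamma(\mu,\pi)$; (4) lower bound: for any feasible $P$, drop the second (nonnegative) term to get $\KL{P}{R} \ge \KL{P_{01}}{R_{01}} \ge \inf_{\gamma\in\Gamma(\mu,\pi)}\KL{\gamma}{R_{01}}$; (5) upper bound / attainment: given the optimizer $\gamma^\star$ of \eqref{eq:schro-static}, define $P^\star$ by gluing $\gamma^\star$ to the $R$-conditionals, verify $P^\star_0 = \mu$, $P^\star_1 = \pi$, and compute $\KL{P^\star}{R} = \KL{\gamma^\star}{R_{01}}$; (6) conclude $P^\star$ solves \eqref{eq:schrodinger} and read off the stated relations $P^\star_{01} = \gamma^\star$ and $P^\star_{(0,1)\mid(\vx,\vy)} = R_{(0,1)\mid(\vx,\vy)}$.

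The main obstacle is the measure-theoretic bookkeeping around absolute continuity and the existence/regularity of the disintegration $R_{(0,1)\mid(\vx,\vy)}$ on path space — in particular making sure the chain rule for KL divergence on $\calC([0,1]\times\R^d)$ is stated at the right level of generality, and handling the case $P \not\ll R$ cleanly so the identity between the two infima is unconditional. Since the paper explicitly wants a self-contained but not overly verbose exposition, I would state the disintegration as a standard fact (citing \cite{Leonard14}) rather than reproving it, and phrase the chain rule as the key computational input; everything else is then a two-line optimization argument. A secondary point worth a remark is uniqueness: the decomposition shows the intermediate conditional law of \emph{any} optimizer must agree with $R$'s, so uniqueness of $P^\star$ reduces exactly to uniqueness of $\gamma^\star$ in \eqref{eq:schro-static}.
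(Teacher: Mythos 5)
Your proposal is correct and uses the same core argument as the paper: the chain-rule decomposition $\KL{P}{R} = \KL{P_{01}}{R_{01}} + \E_{P_{01}}[\KL{P_{(0,1)\mid\cdot}}{R_{(0,1)\mid\cdot}}]$, noting the endpoint constraint only involves the first term while the second is annihilated by matching conditionals to $R$. The paper's proof is simply a terser version of your steps (2)--(6); the additional care you give to absolute continuity and disintegration regularity is welcome but not something the paper spells out.
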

\begin{proof}
    The chain rule for the KL divergence gives
    \[\KL{P}{R}=\KL{P_{01}}{R_{01}}+\mathbb{E}_{(\vx, \vy) \sim P_{01}}\Brack{\KL{P_{(0, 1) \mid (\vx, \vy)}}{R_{(0, 1) \mid (\vx, \vy)}}}.\]
    We can choose $P_{01}$ and $P_{(0, 1) \mid (\vx, \vy)}$ separately: the first term above is the objective in \eqref{eq:schro-static}, and the second is optimized (in fact zero) by setting $P_{(0, 1) \mid (\vx, \vy)} = R_{(0, 1) \mid (\vx, \vy)}$ for all $(\vx, \vy) \in \R^d \times \R^d$.
\end{proof}

Next, we characterize the solution to the (convex) problem \eqref{eq:schro-static} via taking its dual.

\begin{lemma}\label{lem:endpoint_sol}
Let $\gamma^\star \in \Gamma(\mu, \pi)$ optimally solve \eqref{eq:schro-static}.
There exist $f, g: \R^d \to \R_{\ge 0}$ such that 
\begin{equation}\label{eq:gam-rep}
\gamma^\star(\vx, \vy) = R_{01}(\vx, \vy)f(\vx)g(\vy)
\end{equation}
for all $(\vx, \vy) \in \R^d \times \R^d$, and the following \emph{Schr\"odinger system} is satisfied:
\begin{equation}\label{eq:schro-system}
\frac{\mu(\vx)}{R_0(\vx)} = f(\vx)\E_{(\vx, \vy) \sim R_{01}}\Brack{g(\vy) \mid \vx},\quad \frac{\pi(\vy)}{R_1(\vy)} = g(\vy)\E_{(\vx, \vy) \sim R_{01}}\Brack{f(\vx) \mid \vy}.
\end{equation}
\end{lemma}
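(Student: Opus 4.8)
The plan is to solve the convex problem \eqref{eq:schro-static} by Lagrangian duality, exploiting that the KL divergence is strictly convex and the marginal constraints $\gamma_0 = \mu$, $\gamma_1 = \pi$ are linear. First I would form the Lagrangian, introducing dual potential functions $\varphi: \R^d \to \R$ for the constraint $\gamma_0 = \mu$ and $\psi: \R^d \to \R$ for $\gamma_1 = \pi$:
\[
\calL(\gamma, \varphi, \psi) = \KL{\gamma}{R_{01}} - \int \varphi(\vx)\Par{\gamma_0(\vx) - \mu(\vx)}\,\dd\vx - \int \psi(\vy)\Par{\gamma_1(\vy) - \pi(\vy)}\,\dd\vy,
\]
where $\gamma_0(\vx) = \int \gamma(\vx, \vy)\,\dd\vy$ and $\gamma_1(\vy) = \int \gamma(\vx,\vy)\,\dd\vx$. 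Minimizing over $\gamma$ (treating it as an arbitrary positive measure, since the normalization $\int\gamma = 1$ follows automatically from either marginal constraint once $\mu,\pi$ are probability measures), I would take the first-order stationarity condition pointwise in $(\vx, \vy)$. Since $\frac{\partial}{\partial \gamma(\vx,\vy)}\Par{\gamma(\vx,\vy)\log\frac{\gamma(\vx,\vy)}{R_{01}(\vx,\vy)}} = \log\frac{\gamma(\vx,\vy)}{R_{01}(\vx,\vy)} + 1$, setting the derivative of $\calL$ to zero gives
\[
\log\frac{\gamma^\star(\vx,\vy)}{R_{01}(\vx,\vy)} + 1 - \varphi(\vx) - \psi(\vy) = 0 \implies \gamma^\star(\vx,\vy) = R_{01}(\vx,\vy)\exp(\varphi(\vx) - \tfrac12)\exp(\psi(\vy) - \tfrac12).
\]
Defining $f(\vx) \defeq \exp(\varphi(\vx) - \tfrac12)$ and $g(\vy) \defeq \exp(\psi(\vy) - \tfrac12)$ yields the claimed representation \eqref{eq:gam-rep}, with $f, g$ nonnegative by construction.

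**For the Schr\"odinger system**, I would then simply impose the two primal marginal constraints on the representation \eqref{eq:gam-rep}. Integrating $\gamma^\star(\vx,\vy) = R_{01}(\vx,\vy)f(\vx)g(\vy)$ over $\vy$ and demanding this equal $\mu(\vx)$:
\[
\mu(\vx) = f(\vx)\int R_{01}(\vx,\vy)g(\vy)\,\dd\vy = f(\vx)R_0(\vx)\int R_{1\mid 0}(\vy\mid\vx)g(\vy)\,\dd\vy = f(\vx)R_0(\vx)\,\E_{(\vx,\vy)\sim R_{01}}\Brack{g(\vy)\mid\vx},
\]
where I used $R_{01}(\vx,\vy) = R_0(\vx)R_{1\mid 0}(\vy\mid\vx)$. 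Dividing by $R_0(\vx)$ gives the first equation of \eqref{eq:schro-system}, and the symmetric computation integrating over $\vx$ gives the second. The existence of a minimizer $\gamma^\star$ is assumed in the statement ("Let $\gamma^\star\in\Gamma(\mu,\pi)$ optimally solve"), so I need only argue that any optimal $\gamma^\star$ admits this form; I would justify this either by the strict convexity of KL (so the minimizer is unique and must be a KKT point) or by a direct argument that any competitor can be improved toward the product form unless it already has it.

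**The main obstacle** is rigor in the duality step — specifically, justifying that strong duality holds and that the optimal $\gamma^\star$ is actually of the exponential form \eqref{eq:gam-rep}, rather than merely that such a form is a stationary point. In infinite dimensions this requires care: one needs $\KL{\cdot}{R_{01}}$ finite at some feasible point (a Slater-type condition; e.g., the independent coupling $\mu\otimes\pi$ often works if $\KL{\mu}{R_0}$ and $\KL{\pi}{R_1}$ are finite), plus a measurable-selection or approximation argument to pass from the variational stationarity condition to the pointwise formula. A clean alternative I would fall back on, avoiding explicit dual-function machinery, is the following direct verification: given any feasible $\gamma$ with the same marginals, and writing the tentative optimizer as $\gamma^\star = R_{01}fg$, compute
\[
\KL{\gamma}{R_{01}} - \KL{\gamma^\star}{R_{01}} = \KL{\gamma}{\gamma^\star} + \E_{\gamma}\Brack{\log f(\vx) + \log g(\vy)} - \E_{\gamma^\star}\Brack{\log f(\vx) + \log g(\vy)},
\]
and observe the last two expectations depend only on the marginals $\mu,\pi$, hence cancel, leaving $\KL{\gamma}{\gamma^\star}\ge 0$ with equality iff $\gamma = \gamma^\star$. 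This both proves optimality (and uniqueness) of any $\gamma^\star$ of the form \eqref{eq:gam-rep} satisfying the marginal constraints and sidesteps the delicate duality justification; existence of such $f,g$ can then be cited from \cite{Leonard14} or deduced from the assumed existence of the optimizer together with this characterization.
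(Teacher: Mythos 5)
Your main line of reasoning---form the Lagrangian for \eqref{eq:schro-static}, derive the exponential factorization from first-order optimality, then integrate out a marginal to obtain \eqref{eq:schro-system}---is the same strategy the paper follows, and the marginal-integration step is essentially verbatim. The one place you diverge is the inner minimization: the paper eliminates $\gamma$ by the Donsker--Varadhan variational formula, obtaining the dual $\max_{\phi,\psi}\int\phi\,\mu+\int\psi\,\pi-\log\int e^{\phi\oplus\psi}\,\dd R_{01}$, and then reads the product form off from the fact that the minimizer in Donsker--Varadhan is $\gamma\propto e^{\phi+\psi}R_{01}$; you instead differentiate the Lagrangian pointwise in $\gamma(\vx,\vy)$. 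These are morally identical, but the Donsker--Varadhan route is cleaner in infinite dimensions since it produces a bona-fide dual function rather than a formal Gateaux stationarity condition (you correctly flag this as your main rigor gap). Your fallback Pythagorean argument---that $\KL{\gamma}{R_{01}}-\KL{\gamma^\star}{R_{01}}=\KL{\gamma}{\gamma^\star}$ for any competitor $\gamma$ with matching marginals, because the $\log f+\log g$ terms integrate identically against equal marginals---is a genuinely different and attractive \emph{verification} of optimality that the paper does not use; it gives uniqueness for free and sidesteps duality entirely, at the cost of not by itself establishing the \emph{existence} of $f,g$ (which both you and the paper ultimately defer to \cite{Leonard14}). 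One minor technical caveat in the fallback: the cancellation $\E_{\gamma}[\log f+\log g]=\E_{\gamma^\star}[\log f+\log g]$ tacitly requires these integrals to be finite, which is a nontrivial integrability hypothesis on the Schr\"odinger potentials; this is one of the subtleties handled in \cite{Leonard14} and worth flagging if you make this argument your primary one.
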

\begin{proof}
We begin by writing the Lagrangian of \eqref{eq:schro-static}: denoting $\rho \defeq R_{01}$ for short, it is
\begin{gather*}
\KL{\gamma}{\rho}
+ \int \phi(\vx)\Par{\mu(\vx) - \int \gamma(\vx,\vy)\,\dd\vy} \, \dd \vx + \int \psi(\vy)\Par{\pi(\vy) - \int \gamma(\vx,\vy) \,\dd\vx} \,\dd\vy,
\end{gather*}
where $\phi, \psi: \R^d \to \R$ are absolutely integrable with respect to $\mu, \pi$ respectively. To minimize this over $\gamma$,\footnote{Strong duality holds for this problem under mild regularity conditions, justifying the exchanging of $\min$ and $\max$; we refer the reader to Appendix A of \cite{Leonard14} for technicalities regarding this point.} we derive using the Donsker--Varadhan variational formula:
\begin{align*}
&\min_{\gamma \in \calP(\R^d \times \R^d)}\Brace{\KL{\gamma}{\rho} - \iint (\phi(\vx) + \psi(\vy))\gamma(\vx,\vy)\,\dd\vx\,\dd\vy} \\
&\qquad\qquad= -\log\Par{\iint\exp\Par{\phi(\vx) + \psi(\vy)} \rho(\vx, \vy)\,\dd\vx\,\dd\vy}
\end{align*}
where $\calP(\R^d \times \R^d)$ is the set of probability measures over $\R^d \times \R^d$. Thus \eqref{eq:schro-static} is equivalent to
\begin{align*}
\max_{\phi, \psi} \int \phi(\vx) \mu(\vx) \,\dd\vx + \int \psi(\vy) \pi(\vy)\,\dd\vy - \log\Par{\iint\exp\Par{\phi(\vx) + \psi(\vy)} \rho(\vx, \vy)\,\dd\vx\,\dd\vy}.
\end{align*}
Now letting $\phi^\star$, $\psi^\star$ optimize the above expression, and writing $f = \exp(\phi^\star)$, $g = \exp(\psi^\star)$, first-order optimality conditions now show the optimizer of \eqref{eq:schro-static} satisfies
$\gamma^\star(\vx,\vy) = \rho\Par{\vx,\vy}f(\vx)g(\vy)$, 
as claimed. Finally, to compare $\vx$-marginals between $\gamma^\star$ and $\rho$, we compute
\begin{align*}
\frac{\mu(\vx)}{R_0(\vx)} = \frac{\int \rho(\vx,\vy) f(\vx)g(\vy)\,\dd \vy}{\int \rho(\vx,\vy)\,\dd\vy} = f(\vx) \E_{(\vx, \vy) \sim \rho}\Brack{g(\vy) \mid \vx},
\end{align*}
which matches the expression in \eqref{eq:schro-system}; the calculation for $\vy$-marginals follows similarly.
\end{proof}

The factorizations in \eqref{eq:gam-rep}, \eqref{eq:schro-system} extend more generally to characterize the marginal measure $P_\tau$ for all $\tau \in [0, 1]$, when the reference path measure $R$ is \emph{Markov}. Recall that $R$ is Markov if for all times $\tau \in [0, 1]$, we have the conditional independence property
\begin{equation}\label{eq:markov_path}\vp_{[0,\tau]} \perp \vp_{[\tau, 1]} \mid \vp_\tau, \text{ for } \vp_{[0, 1]} \sim R.\end{equation}

\begin{lemma}\label{lem:pstar_markov}
Suppose that $R$ is a Markov reference path measure, i.e., \eqref{eq:markov_path} holds, and define $f, g: \R^d \to \R_{\ge 0}$ as in \eqref{eq:gam-rep}. For all $\tau \in [0, 1]$, define
\begin{equation}\label{eq:factor_tau}f_\tau(\vz) \defeq \E_{\vp_{[0, 1]} \sim R}\Brack{f(\vp_0) \mid \vp_\tau = \vz},\quad g_\tau(\vz) \defeq \E_{\vp_{[0, 1]} \sim R}\Brack{g(\vp_1) \mid \vp_\tau = \vz},\end{equation}
let $P^\star$ optimally solve \eqref{eq:schrodinger} and define $P^\star_{0 \mid \tau}$, $P^\star_{1 \mid \tau}$ to be the laws of $\vp_0 \mid \vp_\tau$, $\vp_1 \mid \vp_\tau$ for $\vp_{[0, 1]} \sim P^\star$ (and $R_{0 \mid \tau}$, $R_{1 \mid \tau}$ similarly for $\vp_{[0, 1]} \sim R$). Then $P^\star$ is Markov, and for all $0 \le \sig \le \tau \le 1$,
\begin{equation}\label{eq:pstar_forms}
\begin{gathered}
P^\star(\vp_{[0, 1]}) = f(\vp_0) g(\vp_1)R(\vp_{[0, 1]}) ,\quad P^\star_\tau(\vz) = f_\tau(\vz)g_\tau(\vz)R_\tau(\vz),\\
P^\star_{\sig \mid \tau}(\vs \mid \vt) = \frac{f_\sig(\vs)}{f_\tau(\vt)} R_{\sig \mid \tau}(\vs \mid \vt),\quad P^\star_{\tau \mid \sig}(\vt \mid \vs) = \frac{g_\tau(\vt)}{g_\sig(\vs)}R_{\tau \mid \sig}(\vt \mid \vs).
\end{gathered}
\end{equation}
\end{lemma}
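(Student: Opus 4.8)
The plan is to bootstrap from Lemma~\ref{lem:endpoint_sol}, which already gives $P^\star_{01}(\vx, \vy) = f(\vx)g(\vy)R_{01}(\vx,\vy)$, together with Lemma~\ref{lem:endpoints_suffice}, which says $P^\star$ agrees with $R$ on the conditional law of the interior path given the endpoints. Combining these two facts, for any path $\vp_{[0,1]}$ we can write
\[
P^\star(\vp_{[0,1]}) = P^\star_{01}(\vp_0, \vp_1)\, P^\star_{(0,1)\mid(\vp_0,\vp_1)}(\vp_{(0,1)}) = f(\vp_0)g(\vp_1) R_{01}(\vp_0,\vp_1) R_{(0,1)\mid(\vp_0,\vp_1)}(\vp_{(0,1)}) = f(\vp_0)g(\vp_1)R(\vp_{[0,1]}),
\]
which is the first identity in \eqref{eq:pstar_forms}. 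This step uses neither the Markov property nor Lemma~\ref{lem:pstar_markov}'s definitions of $f_\tau, g_\tau$; it is essentially a repackaging of the two earlier lemmas.

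Next I would derive $P^\star_\tau(\vz) = f_\tau(\vz)g_\tau(\vz)R_\tau(\vz)$ by integrating out all coordinates except $\vp_\tau = \vz$. The key tool is the Markov property of $R$: conditioned on $\vp_\tau = \vz$, the future $\vp_{[\tau,1]}$ and the past $\vp_{[0,\tau]}$ are independent under $R$, so $R(\vp_{[0,1]}) = R_\tau(\vp_\tau)\,R_{[0,\tau]\mid\tau}(\vp_{[0,\tau]}\mid\vp_\tau)\,R_{[\tau,1]\mid\tau}(\vp_{[\tau,1]}\mid\vp_\tau)$. Plugging this into the displayed formula for $P^\star$, $f(\vp_0)$ depends only on the past branch and $g(\vp_1)$ only on the future branch, so the path integral factorizes: integrating the past branch against $f(\vp_0)$ gives $f_\tau(\vz) = \E_R[f(\vp_0)\mid \vp_\tau=\vz]$ by the definition in \eqref{eq:factor_tau}, and likewise the future branch gives $g_\tau(\vz)$, leaving the factor $R_\tau(\vz)$ out front. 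This simultaneously shows $P^\star$ is Markov, since the same factorization exhibits $P^\star(\vp_{[0,1]})$ as a product of a past-conditional and a future-conditional given $\vp_\tau$ (each of which is itself a normalized conditional of $R$ reweighted by $f$ or $g$).

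For the conditional formulas, I would compute $P^\star_{\sig\mid\tau}$ directly as $P^\star_{\sig\tau}/P^\star_\tau$. Writing $P^\star_{\sig\tau}(\vs,\vt)$ by integrating the displayed path formula over everything except $\vp_\sig=\vs, \vp_\tau=\vt$ (for $\sig \le \tau$), the Markov property of $R$ lets me split into the segment before $\sig$ (carrying $f(\vp_0)$, which integrates to $f_\sig(\vs)$), the segment between $\sig$ and $\tau$ (no $f,g$ factors, contributing $R_{\sig\tau}(\vs,\vt)$), and the segment after $\tau$ (carrying $g(\vp_1)$, integrating to $g_\tau(\vt)$). So $P^\star_{\sig\tau}(\vs,\vt) = f_\sig(\vs)g_\tau(\vt)R_{\sig\tau}(\vs,\vt)$; dividing by $P^\star_\tau(\vt) = f_\tau(\vt)g_\tau(\vt)R_\tau(\vt)$ and using $R_{\sig\tau}/R_\tau = R_{\sig\mid\tau}$ yields $P^\star_{\sig\mid\tau}(\vs\mid\vt) = \frac{f_\sig(\vs)}{f_\tau(\vt)}R_{\sig\mid\tau}(\vs\mid\vt)$. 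The formula for $P^\star_{\tau\mid\sig}$ is symmetric, dividing instead by $P^\star_\sig$. The main obstacle is bookkeeping rather than conceptual: one must be careful that the tower-property identities $\E_R[f(\vp_0)\mid\vp_\tau] = \E_R[\,\E_R[f(\vp_0)\mid\vp_\sig]\mid\vp_\tau] = \E_R[f_\sig(\vp_\sig)\mid\vp_\tau]$ (needed for consistency of the $f_\tau$ at different times, via the Markov property) hold, and that the various conditional densities of $R$ compose correctly — essentially a careful application of \eqref{eq:markov_path} and iterated expectations, with no hard estimates involved.
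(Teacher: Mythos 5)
Your proposal is correct and follows essentially the same route as the paper: establish $P^\star = fgR$ by combining Lemmas~\ref{lem:endpoints_suffice} and~\ref{lem:endpoint_sol}, then use the Markov factorization of $R$ at $\tau$ (and at $\sigma,\tau$) to split the $f$- and $g$-weights onto past and future branches and integrate out, yielding $P^\star_\tau = f_\tau g_\tau R_\tau$ and $P^\star_{\sigma,\tau} = f_\sigma g_\tau R_{\sigma,\tau}$, from which the conditionals follow by division. The only presentational difference is that the paper phrases the marginal computations as conditional expectations of the likelihood ratio $P^\star/R$ and verifies the Markov property of $P^\star$ via a test-function argument (factoring $\E_{P^\star}[ab\mid\vp_\tau]$), whereas you phrase both steps as integrating out path segments — these are the same argument, and your observation that the factorization of $P^\star/P^\star_\tau$ into past- and future-conditionals directly exhibits the Markov property is a valid shortcut.
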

\begin{proof}
Combining Lemmas~\ref{lem:endpoints_suffice} and~\ref{lem:endpoint_sol}, and following notation \eqref{eq:factor_tau}, we have shown that $P^\star_0 = f_0 g_0 R_0$, $P^\star_1 = f_1 g_1 R_1$, and $P^\star_{0,1}(\vx, \vy) = R_{0,1}(\vx, \vy) f(\vx)g(\vy)$. Multiplying both sides of the last equation by $P^\star_{(0, 1) \mid (\vx, \vy)} = R_{(0, 1) \mid (\vx, \vy)}$ from Lemma~\ref{lem:endpoints_suffice} then gives the first claim in \eqref{eq:pstar_forms}.

We next prove that $P^*$ is Markov.  Let $0\leq \tau \leq 1$ and consider bounded functions $a, b$ such that $a$ is measurable with respect to (the sigma algebra generated by) $\vp_{[0, \tau]}$, and  $b$ is measurable with respect to $\vp_{[\tau, 1]}$.  Then
\begin{align*}
    \mathbb{E}_{\vp_{[0, 1]}\sim P^*}\Brack{a(\vp_{[0, \tau]}) b(\vp_{[\tau, 1]})\mid \vp_\tau}
    &=\frac{\mathbb{E}_{\vp_{[0, 1]}\sim R}\Brack{f(\vp_0) a(\vp_{[0, \tau]})b(\vp_{[\tau, 1]}) g(\vp_1)\mid \vp_\tau}}{\mathbb{E}_{\vp_{[0, 1]}\sim R}\Brack{f(\vp_0)g(\vp_1)\mid \vp_\tau}} \\
    &=\frac{\mathbb{E}_{\vp_{[0, 1]}\sim R}\Brack{f(\vp_0) a(\vp_{[0, \tau]}) \mid \vp_\tau} \cdot \mathbb{E}_{\vp_{[0, 1]}\sim R}\Brack{b(\vp_{[\tau, 1]}) g(\vp_1)\mid \vp_\tau}}{\mathbb{E}_{\vp_{[0, 1]}\sim R}\Brack{f(\vp_0)\mid \vp_\tau} \cdot \mathbb{E}_{\vp_{[0, 1]}\sim R}\Brack{g(\vp_1)\mid \vp_\tau}} 
    \\
    &=\mathbb{E}_{\vp_{[0, 1]}\sim P^*}[a(\vp_{[0, \tau]})\mid \vp_\tau]\cdot \mathbb{E}_{\vp_{[0, 1]}\sim P^*}[b(\vp_{[ \tau, 1]})\mid \vp_\tau],
\end{align*}
where the first and last equalities are general results about conditioning, and the second equality uses the Markov property of $R$. We note that we do not divide by $0$, $P^\star$-almost surely.

The second claim in \eqref{eq:pstar_forms} applies the Markov property of $R$:
\begin{align*}\frac{P^\star_\tau(\vz)}{R_\tau(\vz)} &= \E_{\vp_{[0, 1]} \sim R}\Brack{\frac{P^\star(\vp_{[0, 1]})}{R(\vp_{[0, 1]})} \mid \vp_\tau = \vz} \\
&= \E_{\vp_{[0, 1]} \sim R}\Brack{f(\vp_0)g(\vp_1) \mid \vp_\tau = \vz} \\
&= \E_{\vp_{[0, 1]} \sim R}\Brack{f(\vp_0)\mid\vp_\tau = \vz}\cdot \E_{\vp_{[0, 1]} \sim R}\Brack{g(\vp_1) \mid \vp_\tau = \vz} = f_\tau(\vz) g_\tau(\vz).\end{align*}
Similarly, for any $0 \le \sig \le \tau \le 1$,
\begin{align*}
    \frac{P^\star_{\sigma, \tau}(\vs, \vt)}{R_{\sigma, \tau}(\vs, \vt)}
    &=\mathbb{E}_{\vp_{[0, 1]}\sim R}\Brack{\frac{P^*(\vp_{[0, 1]})}{R(\vp_{[0, 1]})}\mid \vp_{\sigma, \tau}=(\vs, \vt)}\\
    &=\mathbb{E}_{\vp_{[0, 1]}\sim R}\Brack{f(\vp_0)g(\vp_1)\mid \vp_{\sigma, \tau}=(\vs, \vt)}\\
    &=\mathbb{E}_{\vp_{[0, 1]}\sim R}\Brack{f(\vp_0)\mid \vp_\sigma =\vs}\cdot \mathbb{E}_{\vp_{[0, 1]}\sim R}\Brack{g(\vp_1)\mid \vp_\tau =\vt} =f_\sigma(\vs)g_\tau(\vt),
\end{align*}
where the third equality applies that $R$ is Markov.  The last two parts of \eqref{eq:pstar_forms} now follow:
\begin{align*}
    & P^\star_{\sigma\mid \tau}(\vs\mid \vt)=\frac{P^\star_{\sigma, \tau}(\vs, \vt)}{P^\star_{\tau}(\vt)}=\frac{f_\sigma(\vs)}{f_\tau(\vt)}R_{\sigma\mid\tau}(\vs \mid \vt),\\
    & P^\star_{\tau\mid \sigma}(\vt\mid \vs)=\frac{P^\star_{\sigma, \tau}(\vs, \vt)}{P^\star_{\sigma}(\vs)}=\frac{g_\tau(\vt)}{g_\sigma(\vs)}R_{\tau\mid\sigma}(\vt \mid \vs).\qedhere
\end{align*} 
\end{proof}

Our next step is to derive the infinitesimal generator of $P^\star$. Consistent with \eqref{eq:generator_tau}, we define the infinitesimal generator corresponding to a path measure $P \in \calP([0, 1] \times \R^d)$ at time $\tau \in [0, 1]$ by:
\begin{equation}\label{eq:gen_P}
\sfL_\tau^P u(\vz) \defeq \lim_{\eta \searrow 0} \frac{\E_{\vp_{[0, 1]} \sim P}\Brack{u(\vp_{\tau + \eta}) - u(\vp_\tau) \mid \vp_\tau = \vz}}{\eta}, 
\end{equation}
and we extend \eqref{eq:cdc} slightly to the setting where $\{v_\tau\}_{\tau \in [0, 1]}$ are time-inhomogeneous functions:
\begin{equation}\label{eq:cdc_P}
\sfGam_\tau^P(u, v_\tau)(\vz) \defeq \half \lim_{\eta \searrow 0} \frac{\E\Brack{(u(\vp_{\tau + \eta}) - u(\vp_\tau))(v_{\tau + \eta}(\vp_{\tau + \eta}) - v_\tau(\vp_\tau)) \mid \vp_\tau = \vz}}{\eta}.
\end{equation}

\begin{lemma}
Following notation in Lemma~\ref{lem:pstar_markov}, \eqref{eq:gen_P}, and \eqref{eq:cdc_P}, for all $u \in \calF(\R^d)$ and $\vz \in \R^d$,
\begin{equation}\label{eq:P-generator}\sfL_\tau^{P^\star} u(\vz) = \sfL_\tau^R u(\vz) + \frac{2\sfGam_\tau^R(g_\tau, u)(\vz)}{g_\tau(\vz)}. \end{equation}
\end{lemma}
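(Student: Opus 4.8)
The plan is to compute the generator $\sfL_\tau^{P^\star}$ directly from its definition \eqref{eq:gen_P} by expressing the one-step transition of $P^\star$ in terms of that of $R$, using the conditional-density representation from Lemma~\ref{lem:pstar_markov}. Concretely, the Markov property of $P^\star$ gives $P^\star_{\tau + \eta \mid \tau}(\vw \mid \vz) = \frac{g_{\tau + \eta}(\vw)}{g_\tau(\vz)} R_{\tau + \eta \mid \tau}(\vw \mid \vz)$, so that
\begin{equation*}
\E_{\vp_{[0,1]} \sim P^\star}\Brack{u(\vp_{\tau + \eta}) \mid \vp_\tau = \vz} = \frac{1}{g_\tau(\vz)}\E_{\vp_{[0,1]} \sim R}\Brack{g_{\tau + \eta}(\vp_{\tau + \eta}) u(\vp_{\tau + \eta}) \mid \vp_\tau = \vz}.
\end{equation*}
Subtracting $u(\vz) = \frac{1}{g_\tau(\vz)} g_\tau(\vz) u(\vz)$, dividing by $\eta$, and sending $\eta \searrow 0$ should, after inserting and subtracting the cross term $\frac{1}{g_\tau(\vz)}\E_R[g_\tau(\vp_{\tau+\eta}) u(\vp_{\tau+\eta}) \mid \vp_\tau = \vz]$ appropriately, split the limit into three pieces: one producing $\sfL_\tau^R u(\vz)$ (the term where $g$ is frozen at time $\tau$), one producing $\frac{u(\vz)}{g_\tau(\vz)} \sfL_\tau^R g_\tau(\vz)$-type contributions, and a mixed quadratic-variation piece.

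The cleanest way to organize the bookkeeping is to apply the product-rule identity for generators: for a time-homogeneous-looking manipulation one writes $\sfL_\tau^R(g_\tau u) = g_\tau \sfL_\tau^R u + u \sfL_\tau^R g_\tau + 2\sfGam_\tau^R(g_\tau, u)$, which is exactly the second line of \eqref{eq:cdc} extended via \eqref{eq:cdc_P}. Thus I would first argue that
\begin{equation*}
\sfL_\tau^{P^\star} u(\vz) = \frac{1}{g_\tau(\vz)}\Par{\text{the }\eta\text{-derivative at }0\text{ of }\E_R[g_{\tau+\eta}(\vp_{\tau+\eta})u(\vp_{\tau+\eta}) \mid \vp_\tau = \vz]} - \frac{u(\vz)}{g_\tau(\vz)}\Par{\partial_\tau g_\tau(\vz) + \sfL_\tau^R g_\tau(\vz)}\cdot 0 + \dots
\end{equation*}
— more precisely, the subtle point is that the object inside the expectation, $g_{\tau + \eta}$, depends on $\eta$ both through its spatial argument $\vp_{\tau+\eta}$ and through its own time index. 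I would handle this by the splitting $g_{\tau+\eta}(\vp_{\tau+\eta}) = g_\tau(\vp_{\tau+\eta}) + (g_{\tau+\eta} - g_\tau)(\vp_{\tau+\eta})$; the first summand yields $\sfL_\tau^R(g_\tau u)$ after dividing by $\eta$, and the second, combined with $u(\vp_{\tau+\eta}) \to u(\vz)$, yields $u(\vz)\, \partial_\tau g_\tau(\vz)$ in the limit. Using that $g_\tau(\vz) = \E_R[g(\vp_1) \mid \vp_\tau = \vz]$ is a space-time harmonic function for $R$ — i.e. $\partial_\tau g_\tau + \sfL_\tau^R g_\tau = 0$, which is the backward Kolmogorov equation for the conditional expectation — the $u(\vz)\partial_\tau g_\tau(\vz)$ term exactly cancels the $u(\vz)\sfL_\tau^R g_\tau(\vz)$ term coming out of the product rule. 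What remains is $\sfL_\tau^R u(\vz) + \frac{2\sfGam_\tau^R(g_\tau, u)(\vz)}{g_\tau(\vz)}$, as claimed.

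The main obstacle I anticipate is the analytic justification of interchanging the $\eta \searrow 0$ limit with the expectation and of the two-variable Taylor expansion of $(\tau, \vz) \mapsto g_\tau(\vz)$ — in particular establishing $\partial_\tau g_\tau + \sfL_\tau^R g_\tau = 0$ rigorously for the relevant class of reference measures, and checking that $g_\tau$ is smooth enough for $\sfGam^R_\tau(g_\tau, u)$ to make sense. For the survey's purposes I would state this as a formal computation valid for suitably regular $R$ (e.g. when $R$ is the Wiener measure, the needed smoothing is automatic and $g_\tau$ solves the heat equation), cite the semigroup background in \cite{BakryGL13}, and relegate the integrability details to a remark, mirroring how the rest of the section treats such points. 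The algebraic heart — the product rule for $\sfGam$ plus the harmonicity cancellation — is short and is where I would put the emphasis.
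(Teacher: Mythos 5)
Your argument is correct and reaches the stated identity, but it routes through a different decomposition than the paper and pays for it with one extra ingredient. The paper starts from the representation $P^\star = f(\vp_0)g(\vp_1)R$, cancels the $f_\tau$ factors via the Markov property, tower-projects $g(\vp_1) \to g_{\tau+\eta}(\vp_{\tau+\eta})$, and then adds and subtracts the \emph{constant} $g_\tau(\vz)$ inside the integral. Because the $u$-increment $u(\vp_{\tau+\eta}) - u(\vp_\tau)$ is kept intact throughout, the two resulting pieces are, after dividing by $\eta$, \emph{directly} the defining limits of $g_\tau(\vz)\sfL_\tau^R u(\vz)$ and $2\sfGam_\tau^R(g_\tau, u)(\vz)$ (the latter being precisely why the time-dependent carr\'e du champ \eqref{eq:cdc_P} was introduced); no further cancellation is needed. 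You instead start from $P^\star_{\tau+\eta\mid\tau} = \tfrac{g_{\tau+\eta}}{g_\tau}R_{\tau+\eta\mid\tau}$ (a correct reading of \eqref{eq:pstar_forms}), split at $g_\tau(\vp_{\tau+\eta})$ rather than $g_\tau(\vz)$, which produces $\sfL_\tau^R(g_\tau u)$ plus a $u(\vz)\,\partial_\tau g_\tau(\vz)$ term. You then need the product rule for the carr\'e du champ \emph{and} the space-time harmonicity $\partial_\tau g_\tau + \sfL_\tau^R g_\tau = 0$ (i.e., $g_\tau(\vp_\tau)$ is an $R$-martingale, which is true since $g_\tau(\vz) = \E_R[g(\vp_1)\mid\vp_\tau=\vz]$) to cancel the spurious $\tfrac{u}{g_\tau}\sfL_\tau^R g_\tau$ piece. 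Both routes work; yours is more modular and makes the martingale property of $g_\tau$ explicit, but it is slightly longer and asks for a bit more regularity of $g_\tau$ (so that $\sfL_\tau^R g_\tau$ is meaningful) than the paper's split, which only ever differentiates $u$ and treats $g_\tau$ through first-order increments. Your remark that the two-variable Taylor expansion and the harmonicity are the delicate analytic points is accurate; the paper sidesteps exactly those by choosing the other splitting.
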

\begin{proof}
First, by using \eqref{eq:pstar_forms} and the Markov property of $R$,
\begin{align*}
\sfL_\tau^{P^\star} u(\vz) &=  \lim_{\eta \searrow 0} \frac 1 \eta \cdot\frac{\E_{\vp_{[0, 1]} \sim R}\Brack{f(\vp_0)(u(\vp_{\tau + \eta}) - u(\vp_\tau)) g(\vp_1)\mid \vp_\tau = \vz}}{ \E_{\vp_{[0, 1]} \sim R}\Brack{f(\vp_0) g(\vp_1) \mid \vp_\tau = \vz}} \\
&= \lim_{\eta \searrow 0} \frac{f_\tau(\vz)\E_{\vp_{[0, 1]} \sim R}\Brack{(u(\vp_{\tau + \eta}) - u(\vp_\tau)) g(\vp_1)\mid \vp_\tau = \vz}}{\eta f_\tau(\vz) g_\tau(\vz)} \\
&= \frac 1 {g_\tau(\vz)} \lim_{\eta \searrow 0} \frac{\E_{\vp_{[0, 1]} \sim R}\Brack{(u(\vp_{\tau + \eta}) - u(\vp_\tau)) g(\vp_1)\mid \vp_\tau = \vz}}{\eta}.
\end{align*}
However, we also may expand
\begin{align*}
\E_{\vp_{[0, 1]} \sim R}&\Brack{(u(\vp_{\tau + \eta}) - u(\vp_\tau)) g(\vp_1)\mid \vp_\tau = \vz} \\
&= \int (u(\vw) - u(\vz)) g_{\tau + \eta}(\vw) R_{\tau + \eta \mid \tau}(\vw \mid \vz)\,\dd\vw \\
&= \int (u(\vw) - u(\vz)) g_{\tau}(\vz) R_{\tau + \eta \mid \tau}(\vw \mid \vz)\,\dd\vw \\
&\qquad + \int (u(\vw) - u(\vz)) (g_{\tau + \eta}(\vw) - g_\tau(\vz)) R_{\tau + \eta \mid \tau}(\vw \mid \vz)\,\dd\vw \\
&= g_\tau(\vz) \E_{\vp_{[0, 1]} \sim R}\Brack{u\Par{\vp_{\tau + \eta}} - u\Par{\vp_{\tau} }\mid \vp_\tau = \vz} \\
&\qquad + \E_{\vp_{[0, 1]} \sim R}\left[\Par{u\Par{\vp_{\tau + \eta}} - u\Par{\vp_{\tau}} }\Par{g_{\tau + \eta}\Par{\vp_{\tau + \eta}} - g_\tau\Par{\vp_{\tau}}}\mid \vp_\tau = \vz\right]
\end{align*}
and combining the above two displays with the definition \eqref{eq:cdc_P} yields the claim.
\end{proof}

In the remainder of the section, we specialize Perspective~\ref{perspective:bridge} to the setting where $\mu = \delta_{\0_d}$, $\pi=\pi_0$, and $R = W$ is the Wiener path measure on $[0, 1]$ (so that $\mu = R_0$ agree). We recall the well-known identities for when $\vp_{0, 1} \sim W$ follows the heat flow $\dd \vp_\tau = \dd \vW_\tau$: 
\begin{equation}\label{eq:heat_gen}
\sfL_\tau^W u = \half \Delta u,\quad \sfGam_\tau^W(u, v_\tau) = \half \inprod{\nabla u}{\nabla v_\tau}.
\end{equation}
The second identity can be verified using an It\^o--Taylor expansion. We can now relate this special case of the Schr\"odinger bridge to stochastic localization.

\begin{theorem}\label{thm:renormal_bridge}
Let $P^\star$ solve \eqref{eq:schrodinger} when $\mu = \delta_{\0_d}$ and $R = W$ is the Wiener path measure on $[0, 1]$. Then if $\vv_\tau \sim P^\star_\tau$, the processes $\{\vc_t\}_{t \ge 0}$ in \eqref{eq:tilt_def} and $\{\vv_\tau\}_{\tau \in [0, 1]}$ satisfy $\vc_t = \frac 1 {1 - \tau} \vv_\tau$, where $t = \frac \tau {1 - \tau}$, and the induced $\pi_t$ in \eqref{eq:tilt_def} and $P^\star_{1 \mid \tau}(\cdot \mid \vv_\tau)$ are identical under this reparameterization.
\end{theorem}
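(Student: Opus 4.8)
The strategy is to identify the Schr\"odinger bridge $P^\star$ (with $\mu = \delta_{\0_d}$, $R = W$) explicitly and match it to the Polchinski SDE \eqref{eq:polchinski_sde}, at which point Theorem~\ref{thm:tilt_renormal} closes the loop. First I would invoke Lemma~\ref{lem:pstar_markov}: since $W$ is Markov, we have $P^\star(\vp_{[0,1]}) = f(\vp_0)g(\vp_1)W(\vp_{[0,1]})$, and because $\mu = \delta_{\0_d}$ is a point mass at the origin while $R_0 = W_0 = \delta_{\0_d}$ as well, the first Schr\"odinger system equation in \eqref{eq:schro-system} forces $f$ to be (a.s.) constant --- only the value $f(\0_d)$ matters, and it is pinned down by normalization. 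Absorbing this constant, we get $P^\star_\tau(\vz) = g_\tau(\vz) W_\tau(\vz)$ where $W_\tau = \Nor(\0_d, \tau\id_d)$ and $g_\tau(\vz) = \E_{\vp_{[0,1]}\sim W}[g(\vp_1)\mid \vp_\tau = \vz]$. The bridge-of-Brownian-motion conditional law $\vp_1 \mid \vp_\tau = \vz$ under $W$ is $\Nor(\vz, (1-\tau)\id_d)$, so $g_\tau(\vz) = \E_{\vw\sim\Nor(\vz,(1-\tau)\id_d)}[g(\vw)]$. Comparing with \eqref{eq:renorm_def}, if we set $g(\vx) = \exp(-V_1(\vx)) = \pi(\vx)\exp(\half\norm{\vx}_2^2)$, then $g_\tau(\vz) = \exp(-V_\tau(\vz))$ up to the Gaussian normalization factor, and hence $P^\star_\tau(\vz) \propto \exp(-V_\tau(\vz) - \frac{1}{2\tau}\norm{\vz}_2^2) = \nu_\tau(\vz)$.

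Next I would verify the second Schr\"odinger system equation, i.e.\ that this choice of $f,g$ actually produces the correct terminal marginal $P^\star_1 = \pi$: with $f$ constant, $P^\star_1(\vy) = g_1(\vy)W_1(\vy) \propto g(\vy)W_1(\vy) = \pi(\vy)\exp(\half\norm{\vy}_2^2)\cdot\exp(-\half\norm{\vy}_2^2) = \pi(\vy)$, as needed; and $P^\star_0 = \delta_{\0_d} = \mu$ holds trivially since $W_0 = \delta_{\0_d}$. So $(f,g)$ is the valid Schr\"odinger pair, and by uniqueness of the bridge (attainability assumed), $P^\star_\tau = \nu_\tau$. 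Then I would apply \eqref{eq:P-generator} together with the Wiener identities \eqref{eq:heat_gen}: $\sfL_\tau^{P^\star} u = \half\Delta u + \frac{2\cdot\half\inprod{\nabla g_\tau}{\nabla u}}{g_\tau} = \half \Delta u + \inprod{\nabla \log g_\tau}{\nabla u} = \half\Delta u - \inprod{\nabla V_\tau}{\nabla u}$, which is exactly the Polchinski generator $\sfL_\tau$ of \eqref{eq:polchinski_gen_def}. By Lemma~\ref{lem:polchinski_sde_derive}, this generator together with the initial condition $\vv_0 \sim \nu_0 = \delta_{\0_d}$ identifies the law of $\{\vv_\tau\}$ under $P^\star$ with the solution of \eqref{eq:polchinski_sde} (here I would note that $P^\star$ Markov plus matching generator plus matching initial marginal determines the process law). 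Finally, since $P^\star_{1\mid\tau}(\cdot\mid\vv_\tau)$ is the terminal conditional, which by the same conditional-density computation equals $\propto g(\vy)W_{1\mid\tau}(\vy\mid\vv_\tau) \propto \pi(\vy)\exp(-\frac{1}{2(1-\tau)}\norm{\vy - \vv_\tau}_2^2)$, this matches the fluctuation measure $\pi_\tau^{\vv_\tau}$ in \eqref{eq:fluctuation2} (expand the square and absorb $\vv_\tau$-only terms). Invoking Theorem~\ref{thm:tilt_renormal}, which already equates \eqref{eq:polchinski_sde} and \eqref{eq:fluctuation2} with \eqref{eq:tilt_def} under $\vc_t = \frac{1}{1-\tau}\vv_\tau$, $t = \frac{\tau}{1-\tau}$, completes the proof.

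\textbf{Main obstacle.} The delicate point is the degeneracy coming from $\mu = \delta_{\0_d}$: one must argue carefully that the Schr\"odinger potential $f$ collapses to a constant and that the conditional-density manipulations (dividing by $g_\tau$, by $W_1$, etc.) are valid despite $R_0$ being a point mass --- in particular that $P^\star$ is genuinely Markov with the stated marginals and that uniqueness of the bridge lets us conclude $P^\star_\tau = \nu_\tau$ rather than merely exhibiting one feasible candidate. A secondary, more routine obstacle is bookkeeping the normalization constants through the chain $g \to g_\tau \to P^\star_\tau \to \nu_\tau$ and through the reparameterization $t = \frac{\tau}{1-\tau}$, so that the rescaling $\vc_t = \frac{1}{1-\tau}\vv_\tau$ lines up exactly with the exponent $\inprod{\vc_t}{\vx} - \frac t2\norm{\vx}_2^2$ in \eqref{eq:tilt_def}; this is the same computation already done in the proof of Theorem~\ref{thm:tilt_renormal}, so it can be cited rather than repeated.
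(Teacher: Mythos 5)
Your proposal is correct and follows essentially the same route as the paper's proof: identify $f \equiv \text{const}$ and $g = \pi/W_1$ from the degenerate starting marginal, compute $g_\tau \propto \exp(-V_\tau)$ so that $\nabla\log g_\tau = -\nabla V_\tau$, match the generator $\sfL_\tau^{P^\star}$ to the Polchinski generator via \eqref{eq:P-generator} and \eqref{eq:heat_gen}, verify $P^\star_{1\mid\tau}(\cdot\mid\vv_\tau) = \pi_\tau^{\vv_\tau}$, and hand off to Theorem~\ref{thm:tilt_renormal}. The only cosmetic difference is that you derive $f \equiv \text{const}$ from the Schr\"odinger system \eqref{eq:schro-system}, whereas the paper argues directly that the product coupling is the only element of $\Gamma(\delta_{\0_d},\pi)$; both are fine.
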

\begin{proof}
We prove this by appealing to Theorem~\ref{thm:tilt_renormal}. In particular, we show that $P^\star_{1 \mid \tau}(\cdot \mid \vv_\tau)$ as defined in the theorem statement agrees with the induced fluctuation measure $\pi_\tau^{\vv_\tau}$ defined in \eqref{eq:fluctuation2}, and that $\vv_\tau$ defined in the theorem statement follows the SDE \eqref{eq:polchinski_sde}. 

To see the first claim, because $\mu = W_0 = \delta_{\0_d}$, the optimal (and only) coupling of $\mu$ and $\pi$ in \eqref{eq:schro-static} is the product coupling, so that $f \equiv 1$ and $g = \frac \pi {W_1}$ in \eqref{eq:gam-rep}. Now, the last part of \eqref{eq:pstar_forms} gives
\begin{align*}
P^\star_{1 \mid \tau}\Par{\vy \mid \vz} &= \frac{g(\vy)}{g_\tau(\vz)} W_{1 \mid \tau}(\vy \mid \vz) = \frac{\pi_0(\vy)}{g_\tau(\vz)} \frac{W_{1 \mid \tau}(\vy \mid \vz)}{W_1(\vy)} \\
&\propto \exp\Par{-\frac 1 {2(1 - \tau)}\norm{\vy - \vz}_2^2 + \half\norm{\vy}_2^2}\pi_0(\vy) \\
&\propto \exp\Par{\frac 1 {1 - \tau}\inprod{\vy}{\vz} - \frac \tau {2(1-\tau)}\norm{\vy}_2^2} \pi_0(\vy).
\end{align*}
This agrees with our definition of the fluctuation measure in \eqref{eq:fluctuation2}, when $\vv_\tau = \vz$.

To see the second claim, we instead show that $\sfL_\tau^{P^\star}$, the infinitesimal generator of $P^\star_\tau$, agrees with \eqref{eq:polchinski_gen_def}. If we can show this, Lemma~\ref{lem:polchinski_sde_derive} implies the SDE \eqref{eq:polchinski_sde} holds for $\vv_\tau$. By combining \eqref{eq:P-generator} and the specialized formulas for the heat flow \eqref{eq:heat_gen}, we have
\begin{align*}
L_\tau^{P^\star} u = \half \Delta u + \frac 1 {g_\tau} \inprod{\nabla g_\tau}{\nabla u} = \half \Delta u + \inprod{\nabla \log g_\tau}{\nabla u}.
\end{align*}
This is exactly the same as \eqref{eq:polchinski_gen_def}, as long as we can show $\nabla \log g_\tau = -\nabla V_\tau$. Applying \eqref{eq:factor_tau},
\[g_\tau(\vz) = \E_{\vp_{[0, 1]} \sim W}\Brack{g(\vp_1) \mid \vp_\tau = \vz} =  \E_{\vp_{[0, 1]} \sim W}\Brack{\frac{\pi(\vp)}{W_1(\vp)} \mid \vp_\tau = \vz}.\]
Further, $\frac \pi {W_1} = \frac 1 Z \exp(-V_1)$ as defined in Perspective~\ref{perspective:renorm} for some normalizing constant $Z$. Moreover, conditioned on $\vp_\tau = \vz$, we know that $\vp_1 \sim \Nor(\vz, (1 - \tau)\id_d)$ under $W$. Thus,
\begin{align*}g_\tau(\vz) &= \frac 1 Z \E_{\vp_1 \sim \Nor(\vz, (1 - \tau)\id_d)}\Brack{\exp\Par{-V_1(\vp_1)}} \\
&= \frac 1 Z \E_{\vg \sim \Nor(\0_d, (1 - \tau)\id_d)}\Brack{\exp\Par{-V_1(\vz + \vg)}} = \frac 1 Z \exp\Par{-V_\tau(\vz)}, \end{align*}
at which point it is clear that $\nabla \log g_\tau = -\nabla V_\tau$ pointwise.
\end{proof}

The special case of the Schr\"odinger bridge to a Wiener reference measure in Theorem~\ref{thm:renormal_bridge} is extremely well-studied. In particular, the transition kernels in \eqref{eq:pstar_forms} are an instance of Doob's $h$-transform \cite{Doob01}, and the resulting optimal path measure $P^\star$ is induced by the \emph{F\"ollmer drift} \cite{Foll1985, Foll1986}; see further discussion in Section~\ref{sec:dynamic}, as well as \cite{Lehec13} for applications of this construction.

We include a remark demonstrating the broader applicability of these techniques.

\begin{remark}
    Consider the path measure $R$, defined as follows: $R_0 = \delta_{\0_d}$, and $\vv_\tau \sim R_\tau$ satisfies the SDE $\D \vv_\tau=\vm_\tau(\vv_\tau)\,\D \tau+\msig_\tau(\vv_\tau)\,\D \vW_\tau$.  In general, it is difficult to calculate the transition densities for $P^\star$, the optimizer to \eqref{eq:schrodinger} with the stated $R$ and $\mu = \delta_{\0_d}$.  Nevertheless, using the techniques from this section, we can derive an SDE for $\vv_{[0, 1]} \sim P^\star$: defining $g \defeq \frac{P^\star_1}{R_1}$ and $g_\tau$ as in \eqref{eq:factor_tau},
\[\D\vv_\tau=\left(\vm_\tau(\vv_\tau)+\msig_\tau(\vv_\tau)\msig_\tau(\vv_\tau)^\top \nabla \log g_\tau(\vv_\tau)\right)\, \D \tau+\msig_\tau(\vv_\tau)\,\D \vW_\tau.\]
    For a complete derivation, see Section 3.3 of \cite{Chewi25}.
\end{remark}

\section{Dynamic Schr\"odinger bridge}\label{sec:dynamic}

We present the first of two alternative perspectives on the static Schr\"odinger bridge problem: a dynamic reformulation framed as computing an optimal drift with a target end measure. 

In this section, let $L^2([0, 1]; \R^d) \defeq \{\mathbf u: [0, 1] \to \R^d, \int_0^1 \norm{\mathbf u_\tau}_2^2\,\dd\tau < \infty\}$, let $\{\vW_\tau\}_{\tau \in [0, 1]}$ be a Wiener process, and let $\{\msc F_\tau\}_{\tau \in [0, 1]}$ be the filtration generated by $\{\vW_\tau\}_{\tau \in [0, 1]}$.

\begin{tcolorbox}[colback=blue!10, colframe=blue!50!black, boxrule=0.5pt, arc=2mm]
\begin{perspective}\label{perspective:dynamic-eot}
    Let $\mu, \pi \in \calP(\R^d)$, and define 
    \[\calU \defeq \{\vu \in L^2([0, 1], \R^d)\mid \vu \text{ is adapted to } \{\msc F_\tau\}_{\tau \in [0, 1]}\}.\]
    The induced \emph{dynamic Schr\"odinger bridge} problem is:
\begin{equation}\label{eq:dynamic-version-1}\tag{$\msf{DSB}$}
\begin{gathered}
    \inf_{\mathbf u \in \mc U} \E\Bigl[\frac{1}{2} \int_0^1 \norm{\mathbf{u_\tau}}_2^2 \, \D \tau \Bigr], \\ 
    \text{such that } \D \mathbf x_\tau = \mathbf u_\tau \, \D \tau + \D \vW_\tau ,\quad \operatorname{law}(\mathbf x_0)=\mu, \quad \operatorname{law}(\mathbf x_1)=\pi.
\end{gathered}
\end{equation}
\end{perspective}
\end{tcolorbox}

We make use of Girsanov's theorem, which is a classic change-of-measure principle on the space $L^2([0, 1]; \R^d)$. We note that the specialization of Girsanov's theorem to determinsitic drifts was already introduced as the Cameron-Martin theorem (Lemma~\ref{lem:cameron_martin}).

\begin{lemma}[{Girsanov's theorem; adapted from~\cite[Theorem 3.2.8, Theorem 4.4.1]{Chewi25}}]\label{lem:girsanov}
    Suppose $\{\mathbf x_\tau\}_{\tau \in [0, 1]}$ is simultaneously driven by the following SDEs, for $\{\mathbf u_\tau\}_{\tau \in [0, 1]}$ adapted to $\{\msc F_\tau\}_{\tau \in [0, 1]}$:
    \begin{align*}
        \D \mathbf x_\tau = \mathbf u_\tau \, \D \tau + \D \vW_\tau, \qquad \D \mbf x_\tau = \D \widetilde \vW_\tau,
    \end{align*}
    with initial condition $\mathbf x_0 = \mathbf a$. Let $P_{\mbf a}$ be the probability measure under which $\{\vW_\tau\}_{\tau \in [0, 1]}$ is a standard Brownian motion, and similarly let $W_{\mbf a}$ be the probability measure under which $\{\widetilde \vW_\tau\}_{\tau \in [0, 1]}$ is Brownian. 
    Then,\footnote{Formally, Girsanov's theorem requires that a technical condition called \emph{Novikov's condition}. This condition can be ignored for our application, as for the $\mathrm{KL}$ divergence, it suffices to apply a standard localization argument combined with lower semicontinuity. See~\cite[Section 5.2]{ChenCLLSZ23} for a similar argument.} $P_{\mbf a}$ is absolutely continuous with respect to $W_{\mathbf a} \in \mc P([0, 1] \times \R^d)$, and
    \begin{align*}
       \E_{P_{\mbf a}}\Bigl[ \log \frac{P_{\mbf a}}{ W_{\mbf a}}\Bigr] = \frac{1}{2} \int_0^1 \E_{P_{\mbf a}} \Bigl[\norm{\mbf u_\tau}_2^2\Bigr] \, \D \tau.
    \end{align*}
\end{lemma}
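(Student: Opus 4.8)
The plan is to prove Girsanov's theorem in the standard way: exhibit the Radon--Nikodym derivative as a Dol\'eans-Dade exponential, identify the transformed measure via L\'evy's characterization, and then take a logarithm and an expectation under $P_{\mbf a}$. Throughout I work under $P_{\mbf a}$, so that $\{\vW_\tau\}_{\tau \in [0,1]}$ is a Brownian motion and $\mbf x_\tau = \mbf a + \int_0^\tau \mbf u_s\,\dd s + \vW_\tau$; the second SDE (with the usual convention $\widetilde\vW_0 = \0_d$) then forces $\widetilde\vW_\tau = \mbf x_\tau - \mbf a = \vW_\tau + \int_0^\tau \mbf u_s\,\dd s$. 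Define
\[Z_\tau \defeq \exp\Par{-\int_0^\tau \inprod{\mbf u_s}{\dd \vW_s} - \half \int_0^\tau \norm{\mbf u_s}_2^2\,\dd s},\]
so that It\^o's lemma gives $\dd Z_\tau = -Z_\tau \inprod{\mbf u_\tau}{\dd\vW_\tau}$; hence $Z$ is a nonnegative $P_{\mbf a}$-local martingale, in particular a supermartingale with $\E_{P_{\mbf a}}[Z_1]\le 1$. Assuming Novikov's condition (or, in the general case, passing through the localization-plus-lower-semicontinuity argument referenced in the footnote, which only ever needs the final quantity as a limit of valid $\mathrm{KL}$ divergences), $Z$ is a genuine martingale with $\E_{P_{\mbf a}}[Z_1]=1$, so $\dd Q \defeq Z_1\,\dd P_{\mbf a}$ is a probability measure on $\msc F_1$.

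Next I would show $Q = W_{\mbf a}$ on $\msc F_1$. Since $\widetilde\vW$ differs from the continuous local martingale $\vW$ by a finite-variation process, its quadratic covariation matrix is still $\tau\,\id_d$, so by L\'evy's characterization it suffices to check that $\widetilde\vW$ is a $Q$-local martingale. Using the $P_{\mbf a}$-dynamics and the product rule, $\dd\Par{Z_\tau\widetilde\vW_\tau} = \widetilde\vW_\tau\,\dd Z_\tau + Z_\tau\,\dd\widetilde\vW_\tau + \dd\langle Z,\widetilde\vW\rangle_\tau$ with $\dd\langle Z,\widetilde\vW\rangle_\tau = -Z_\tau\mbf u_\tau\,\dd\tau$, and since $\dd\widetilde\vW_\tau = \mbf u_\tau\,\dd\tau + \dd\vW_\tau$, the finite-variation parts cancel and $Z_\tau\widetilde\vW_\tau$ is a $P_{\mbf a}$-local martingale; a standard conditional-expectation (Bayes) computation then transfers this to $\widetilde\vW$ being a $Q$-local martingale. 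Therefore $\widetilde\vW$ is a $Q$-Brownian motion, so $Q$ and $W_{\mbf a}$ agree on $\msc F_1$; that is, $\frac{\dd W_{\mbf a}}{\dd P_{\mbf a}} = Z_1$. Because $Z_1 > 0$ identically, inverting gives $P_{\mbf a} \ll W_{\mbf a}$ with
\[\log\frac{\dd P_{\mbf a}}{\dd W_{\mbf a}} = -\log Z_1 = \int_0^1 \inprod{\mbf u_s}{\dd \vW_s} + \half\int_0^1 \norm{\mbf u_s}_2^2\,\dd s.\]

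Finally I take $\E_{P_{\mbf a}}$ of the last display. Under $P_{\mbf a}$, the process $\vW$ is a Brownian motion and $\mbf u$ is adapted to $\{\msc F_\tau\}$, so $\int_0^1 \inprod{\mbf u_s}{\dd\vW_s}$ is a mean-zero (local) martingale; the same integrability input as above makes its expectation vanish, leaving $\E_{P_{\mbf a}}[\log(\dd P_{\mbf a}/\dd W_{\mbf a})] = \half\int_0^1 \E_{P_{\mbf a}}[\norm{\mbf u_s}_2^2]\,\dd s$, which is the claim. The only real obstacle is this integrability bookkeeping --- establishing that $Z$ is a genuine (not merely local) martingale, and that the It\^o integral has zero expectation --- and this is precisely the technical point the footnote defers to a standard localization argument combined with lower semicontinuity of the $\mathrm{KL}$ divergence, exactly as in \cite[Section 5.2]{ChenCLLSZ23}.
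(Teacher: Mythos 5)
The paper does not prove this lemma; it cites \cite[Theorems 3.2.8, 4.4.1]{Chewi25} and relies on the footnote to defer the Novikov issue. Your argument is therefore supplying a proof where the paper has none, and it is the standard one: you form the Dol\'eans--Dade exponential $Z_\tau$, verify $\dd Z_\tau = -Z_\tau\inprod{\vu_\tau}{\dd\vW_\tau}$, show $Z\widetilde\vW$ is a $P_{\va}$-local martingale so that L\'evy's characterization identifies the tilted measure $Q = Z_1\,P_{\va}$ with $W_{\va}$, invert (using strict positivity of $Z_1$) to get $P_{\va} \ll W_{\va}$ with $\log\frac{\dd P_{\va}}{\dd W_{\va}} = \int_0^1\inprod{\vu_s}{\dd\vW_s} + \half\int_0^1\norm{\vu_s}_2^2\,\dd s$, and take $\E_{P_{\va}}$ to kill the It\^o integral. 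The direction of the change of measure, the sign in the exponent, and the quadratic-covariation cancellation in the product rule are all handled correctly. The integrability caveats (that $Z$ be a true martingale and the stochastic integral have zero mean) are genuine, but you correctly flag them as exactly the technicality the paper's footnote defers to the localization-plus-lower-semicontinuity argument of \cite[Section 5.2]{ChenCLLSZ23}, so no gap.
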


As a last helper definition, when $\mu$ is a probability measure on $\R^d$, we define the \emph{generalized Wiener measure} $W_\mu \in \calP([0, 1] \times \R^d)$ as $W_\mu \defeq \E_{\va \sim \mu} W_{\va}$. Equivalently, $\vp_{[0, 1]} \sim W_\mu$ follows
\[\vp_0 \sim \mu,\quad \vp_{(0, 1] \mid 0} \sim W_{\vp_0}.\]

\begin{theorem}\label{thm:dynamic_static}
    The path measure $P = \operatorname{law}(\{\mathbf x_\tau\}_{\tau \in [0, 1]})$ induced by the optimal solution to~\eqref{eq:dynamic-version-1} is the optimal solution to~\eqref{eq:schrodinger} with $R = W_\mu$.
\end{theorem}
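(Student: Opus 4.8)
The plan is to set up a feasibility- and value-preserving correspondence between the admissible controls $\mathbf{u}$ for \eqref{eq:dynamic-version-1} and the admissible path measures $P$ for \eqref{eq:schrodinger} with $R = W_\mu$, conclude that the two optimal values coincide, and then invoke uniqueness of the \eqref{eq:schrodinger}-optimizer to identify the law of the optimally-controlled diffusion with $P^\star$. The uniqueness I would use is that $P \mapsto \KL{P}{W_\mu}$ is strictly convex on the convex feasible set $\{P : P_0 = \mu,\ P_1 = \pi\}$, with attainment handled as in \cite[Appendix A]{Leonard14}.

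\textbf{Controls $\to$ measures.} First I would take an arbitrary admissible $\mathbf{u} \in \calU$ for \eqref{eq:dynamic-version-1}, with induced path law $P \defeq \operatorname{law}(\{\mathbf{x}_\tau\}_{\tau \in [0,1]})$. Feasibility for \eqref{eq:schrodinger} is immediate from the endpoint constraints $P_0 = \mu$, $P_1 = \pi$. For the objective, I would disintegrate over the starting point: since $W_\mu = \E_{\va \sim \mu}W_\va$ has initial marginal $\mu = P_0$, the chain rule for KL divergence gives
\[\KL{P}{W_\mu} = \KL{P_0}{\mu} + \E_{\va \sim \mu}\Brack{\KL{P^\va}{W_\va}} = \E_{\va \sim \mu}\Brack{\KL{P^\va}{W_\va}},\]
where $P^\va$ denotes the law of the path conditioned on $\mathbf{x}_0 = \va$. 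For each $\va$, $P^\va$ is exactly the law of the SDE $\dd\mathbf{x}_\tau = \mathbf{u}_\tau\,\dd\tau + \dd\vW_\tau$ started at $\va$, so Lemma~\ref{lem:girsanov} gives $\KL{P^\va}{W_\va} = \tfrac12\int_0^1 \E[\norm{\mathbf{u}_\tau}_2^2 \mid \mathbf{x}_0 = \va]\,\dd\tau$; averaging over $\va \sim \mu$ and using Fubini yields $\KL{P}{W_\mu} = \tfrac12 \int_0^1 \E[\norm{\mathbf{u}_\tau}_2^2]\,\dd\tau$, which is precisely the \eqref{eq:dynamic-version-1} objective at $\mathbf{u}$. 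Hence the optimal value of \eqref{eq:schrodinger} is at most that of \eqref{eq:dynamic-version-1}.

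\textbf{Measures $\to$ controls.} For the reverse inequality I would take $P$ admissible for \eqref{eq:schrodinger} with $\KL{P}{W_\mu} < \infty$, form the $W_\mu$-martingale $M_\tau \defeq \E_{W_\mu}[\tfrac{\dd P}{\dd W_\mu} \mid \msc{F}_\tau]$, and apply the martingale representation theorem to write $\dd M_\tau = M_\tau \inprod{\mathbf{u}_\tau}{\dd\vW_\tau}$ for an adapted $\mathbf{u}$. The converse direction of Girsanov's theorem --- the same change-of-measure computation as in Lemma~\ref{lem:girsanov} run in reverse --- then shows that under $P$ the canonical process satisfies $\dd\mathbf{x}_\tau = \mathbf{u}_\tau\,\dd\tau + \dd\widetilde\vW_\tau$ for a $P$-Brownian motion $\widetilde\vW$, and rerunning the previous paragraph's computation identifies $\KL{P}{W_\mu} = \tfrac12\int_0^1 \E_P[\norm{\mathbf{u}_\tau}_2^2]\,\dd\tau < \infty$, whence $\mathbf{u} \in L^2([0,1];\R^d)$, i.e.\ $\mathbf{u} \in \calU$. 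Since $P_0 = \mu$ and $P_1 = \pi$, this $\mathbf{u}$ is admissible for \eqref{eq:dynamic-version-1} with matching objective, so the optimal value of \eqref{eq:dynamic-version-1} is at most that of \eqref{eq:schrodinger}.

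\textbf{Conclusion.} The two optimal values are therefore equal. If $\mathbf{u}^\star$ solves \eqref{eq:dynamic-version-1} optimally, its induced $P$ is admissible for \eqref{eq:schrodinger} and attains the common optimal value, so $P = P^\star$ by uniqueness. The hard part will be the converse-Girsanov step in the third paragraph: it needs the martingale representation theorem on $\{\msc F_\tau\}$ together with some care about integrability of $\mathbf{u}$ (Novikov's condition) before Lemma~\ref{lem:girsanov} applies literally; as the footnote to that lemma already records, this can be circumvented by a localization argument combined with lower semicontinuity of relative entropy. Everything else is a routine combination of the KL chain rule with Lemma~\ref{lem:girsanov}.
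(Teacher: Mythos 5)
Your proof is correct in outline and is substantially more complete than the paper's. The paper's proof establishes only the one-sided inequality $\KL{P}{W_\mu} \le \tfrac{1}{2}\E_P[\int_0^1 \norm{\vu_\tau}_2^2\,\dd\tau]$ for the $P$ induced by an arbitrary admissible control (via the $\mathrm{KL}$ chain rule, Girsanov, and a data processing step), which gives $\mathrm{OPT}_{\mathsf{SSB}} \le \mathrm{OPT}_{\mathsf{DSB}}$ but does not, on its own, show that the path law of the optimally-controlled diffusion equals the \eqref{eq:schrodinger} optimizer. You correctly identify that this requires both the reverse inequality (your ``measures $\to$ controls'' step, via martingale representation and converse Girsanov) and the strict convexity/uniqueness of the \eqref{eq:schrodinger} optimizer to conclude. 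So your overall architecture --- match optimal values in both directions, then invoke uniqueness --- is the honest way to prove the stated claim, and fills a genuine gap in the paper's argument.

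One local overclaim worth flagging: in your first step you assert equality $\KL{P^\va}{W_\va} = \tfrac{1}{2}\int_0^1 \E[\norm{\vu_\tau}_2^2\mid \vx_0 = \va]\,\dd\tau$ directly from Lemma~\ref{lem:girsanov}. As stated, that lemma computes the relative entropy between the underlying probability measures (equivalently, between the joint laws of $(\vW, \vu)$). The path law of $\vx$ alone is a pushforward of this, so the $\mathrm{KL}$ between the path laws $P^\va, W_\va$ is only bounded above by the right-hand side in general, via the data processing inequality --- exactly the step the paper invokes. Equality holds if $\vu$ is a feedback control adapted to the filtration of $\vx$, but an arbitrary $\vu \in \calU$ is only required to be $\{\msc F_\tau^{\vW}\}$-adapted. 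Since only ``$\le$'' is needed in this direction (the reverse inequality is supplied by your second step), the conclusion stands; you should just replace the $=$ with $\le$ and cite data processing, as the paper does.
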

\begin{proof}
    Starting from~\eqref{eq:schrodinger} and using the chain rule for $\mathrm{KL}$, we have
    \begin{align*}
        \KL{P}{R_\mu} &= \KL{P_0}{\mu} + \E_{\mathbf x_0 \sim \mu}\Brack{ \KL{P_{(0, 1] \mid 0}(\cdot \mid \mathbf x_0)}{W_{\mathbf x_0}}} \\
        &= \E_{\mathbf x_0 \sim \mu}\Brack{ \KL{P_{(0, 1] \mid 0}(\cdot \mid \mathbf x_0)}{W_{\mathbf x_0}}} \le \frac{1}{2} \E_{P} \Bigl[\int_0^1 \norm{\mathbf u_\tau}_2^2 \, \D t\Bigr].
    \end{align*}
    The second line used $P_0 = \mu$. We then applied Lemma~\ref{lem:girsanov}: $\{\vx_\tau\}_{\tau \in [0, 1]}$ is adapted to the underlying Brownian path measures, so the KL divergence is bounded via the data processing inequality.
\end{proof}

When $\mu = \delta_{\0_d}$ in Theorem~\ref{thm:dynamic_static}, the measure $W_\mu$ is just the standard Wiener measure. In this special case (the setting of Theorem~\ref{thm:renormal_bridge}), the optimal drift $\{\vu_\tau\}_{\tau \in [0, 1]}$ in \eqref{eq:dynamic-version-1} is called the F\"ollmer drift. 
The full discussion around the equivalence of these two areas (and generalizations thereof) contains many subtle points and is a fertile area of mathematical research.
\section{Entropic optimal transport}\label{sec:eot}

We conclude with a second alternative perspective on the static Schr\"odinger bridge problem: a connection to entropic optimal transport, following \cite{Peyre2019l, ChewiNilesWeed2025}. 

\begin{tcolorbox}[colback=blue!10, colframe=blue!50!black, boxrule=0.5pt, arc=2mm]
\begin{perspective}\label{perspective:eot}
Let $\mu, \pi \in \calP(\R^d)$, and let $\varepsilon > 0$. The induced \emph{entropic optimal transport} problem is:
\begin{align}\label{eq:EOT}\tag{$\mathsf{EOT}$}
    \underset{{\gamma \in \Gamma(\mu, \pi)}}{\min} \biggl\{\int \half\norm{\mbf x - \mbf y}_2^2 \,\dd \gamma(\mbf x, \mbf y) + \varepsilon \KL{\gamma}{\mu \otimes \pi} \biggr\}\,.
\end{align}
\end{perspective}
\end{tcolorbox}
Note that the solution to \eqref{eq:EOT} exists and is unique, by strict convexity of $\text{KL}$ in its first argument, and compactness of the set $\Gamma(\mu, \pi)$ (defined in Section~\ref{sec:schrodinger}), the set of all couplings of $\mu$ and $\pi$. Here, $\mu \otimes \pi$ refers to the trivial coupling (product measure) between $\mu$ and $\pi$.

When $\eps \searrow 0$, \eqref{eq:EOT} recovers the classical optimal transport problem~\cite{ChewiNilesWeed2025}. One motivation for studying \eqref{eq:EOT} is that while solving classical optimal transport can often be computationally intractable,~\eqref{eq:EOT} can be efficiently solved via methods such as Sinkhorn's algorithm~\cite{Sink1967}.

We now relate \eqref{eq:EOT} to \eqref{eq:schrodinger} in the case of generalized Wiener reference measures.

\begin{theorem}
    The optimization problems in~\eqref{eq:EOT} (with $\varepsilon=1$) and~\eqref{eq:schrodinger} (with $R = W_\mu$) are the same, up to an additive constant shift (depending only on $\mu, \nu$).
\end{theorem}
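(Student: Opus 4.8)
The plan is to reduce the path-space problem \eqref{eq:schrodinger} to its static form \eqref{eq:schro-static} via Lemma~\ref{lem:endpoints_suffice}, and then match the static objective against the \eqref{eq:EOT} objective at $\varepsilon = 1$ up to an additive constant. Concretely, by Lemma~\ref{lem:endpoints_suffice} with $R = W_\mu$ (so that $R_0 = \mu$ and the constraint $P_0 = \mu$ is consistent), the optimal value of \eqref{eq:schrodinger} equals $\inf_{\gamma \in \Gamma(\mu,\pi)} \KL{\gamma}{(W_\mu)_{01}}$, and the optimal path measure is recovered from the optimal coupling by bridging with the Wiener conditional $R_{(0,1) \mid (\vx,\vy)}$. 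So it suffices to compare $\KL{\gamma}{(W_\mu)_{01}}$ with the \eqref{eq:EOT} objective over all $\gamma \in \Gamma(\mu, \pi)$.

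First I would identify the endpoint marginal of $W_\mu$: since $\vp_{[0,1]} \sim W_\mu$ has $\vp_0 \sim \mu$ and $\vp_{(0,1] \mid 0} \sim W_{\vp_0}$, a Wiener path over unit time gives $\vp_1 = \vp_0 + \vW_1$ with $\vW_1 \sim \Nor(\0_d, \id_d)$ independent of $\vp_0$, so
\[(W_\mu)_{01}(\vx, \vy) = \mu(\vx)\,(2\pi)^{-\frac d 2}\exp\Par{-\half\norm{\vx - \vy}_2^2}.\]
Then for any $\gamma \in \Gamma(\mu,\pi)$, expanding the definition of the KL divergence and substituting the display above,
\[\KL{\gamma}{(W_\mu)_{01}} = \int \gamma(\vx,\vy)\log\gamma(\vx,\vy)\,\dd\vx\,\dd\vy - \int\gamma(\vx,\vy)\log\mu(\vx)\,\dd\vx\,\dd\vy + \frac d 2\log(2\pi) + \int \half\norm{\vx-\vy}_2^2\,\dd\gamma(\vx,\vy).\]
Using $\KL{\gamma}{\mu\otimes\pi} = \int\gamma\log\gamma - \int\gamma\log\mu(\vx) - \int\gamma\log\pi(\vy)$, the first two terms on the right become $\KL{\gamma}{\mu\otimes\pi} + \int\gamma(\vx,\vy)\log\pi(\vy)\,\dd\vx\,\dd\vy$, and the marginal constraint $\gamma_1 = \pi$ collapses the last integral to $\int\pi(\vy)\log\pi(\vy)\,\dd\vy$, which does not depend on $\gamma$. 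This yields the identity
\[\KL{\gamma}{(W_\mu)_{01}} = \Par{\int\half\norm{\vx-\vy}_2^2\,\dd\gamma(\vx,\vy) + \KL{\gamma}{\mu\otimes\pi}} + C, \qquad C \defeq \frac d 2\log(2\pi) + \int\pi(\vy)\log\pi(\vy)\,\dd\vy,\]
valid (in $[0, +\infty]$) for every $\gamma \in \Gamma(\mu, \pi)$, with $C$ depending only on $\pi$ (hence only on $\mu, \pi$).

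To conclude, the identity above shows that \eqref{eq:schro-static} (with $R = W_\mu$) and \eqref{eq:EOT} (with $\varepsilon = 1$) have the same feasible set $\Gamma(\mu,\pi)$, the same minimizers, and optimal values differing exactly by $C$; combined with the reduction of \eqref{eq:schrodinger} to \eqref{eq:schro-static} furnished by Lemma~\ref{lem:endpoints_suffice}, this proves the claim. I do not expect any substantive obstacle here: the only care required is the constant bookkeeping — correctly tracking the Gaussian normalizer $(2\pi)^{-d/2}$ and invoking the marginal constraint $\gamma_1 = \pi$ to turn $\int \gamma\log\pi$ into a $\gamma$-independent constant — and noting that the displayed relation holds as an identity of extended reals, so finiteness of one objective is equivalent to finiteness of the other.
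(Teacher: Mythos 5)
Your proof is correct and follows essentially the same route as the paper: reduce \eqref{eq:schrodinger} to the static problem \eqref{eq:schro-static} via Lemma~\ref{lem:endpoints_suffice}, write out the KL objective, and absorb the $\gamma$-independent pieces into a constant using the marginal constraints. Your version is in fact slightly more careful than the paper's: you explicitly track the $\log\mu(\vx)$ factor in $(W_\mu)_{01}$ and show that $\int \gamma \log\mu$ collapses by the constraint $\gamma_0 = \mu$, whereas the paper loosely states $\log R_{0,1}(\vx,\vy) = -\tfrac 1 2 \norm{\vx-\vy}_2^2 + C$ ``for a constant $C$,'' which is only constant after integrating against a coupling with the prescribed marginals.
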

\begin{proof}
We start from~\eqref{eq:schrodinger}. As the initial condition $R_0 = \mu$ is fixed,
\begin{align*}
    \underset{{\gamma \in \Gamma(\mu, \pi)}}{\min} \;  \KL{\gamma}{R_{0, 1}}
    =&\underset{{\gamma \in \Gamma(\mu, \pi)}}{\min}\;  \mathbb{E}_{\mathbf{x}\sim \mu}[\KL{\gamma(\mathbf{y}\mid \mathbf{x})}{R_{1\mid 0}(\mathbf{y}\mid \mathbf{x})}]+\KL{\mu(\mathbf{x})}{R_0(\mathbf{x})}\\
    =&\underset{{\gamma \in \Gamma(\mu, \pi)}}{\min}\; \int -\log R_{0, 1}(\mbf{x}, \mbf{y})\gamma(\mbf{x}, \mbf{y})\, \D \mbf{x}\, \D\mbf{y}+\int \gamma(\mbf{x}, \mbf{y})\log \gamma(\mbf{x}, \mbf{y})\, \D\mbf{x}\, \D\mbf{y}
\end{align*}
The first line used Lemma~\ref{lem:endpoints_suffice}.
Now, as $R = W_\mu$ is a generalized Wiener measure, using the explicit form of $\log R_{0,1}(\vx, \vy) = -\half \norm{\vx - \vy}_2^2 + C$ for a constant $C$, and 
\[\int \gamma(\mbf{x}, \mbf{y})\log \gamma(\mbf{x}, \mbf{y})\, \D\mbf{x}\, \D\mbf{y} - \KL{\gamma}{\mu \otimes \pi} = \int \mu(\vx)\log \mu(\vx)\,\dd\vx + \int \nu(\vy)\log \nu(\vy) \,\dd\vy,\]
where the right-hand side is independent of $\gamma$, reduces the problem to~\eqref{eq:EOT}.
\end{proof}

\section*{Acknowledgements}
MSZ was supported by an NSERC CGS-D fellowship. 

We thank Sinho Chewi and Ronen Eldan for helpful discussions.

\newpage

\printbibliography

\appendix
\section{Anisotropic stochastic localization}\label{app:anisotropic}

In this section we give a more general variant of \eqref{eq:tilt_def} based on its presentation in \cite{EldanMZ20}, that allows for non-identity ``control matrices'' to drive the process.

Let $\{\mathbf{C}_t\}_{t \ge 0}$ be an $\R^{d \times d}$-valued process that is adapted to the filtration $\{\msc F_t\}_{t \ge 0}$ generated by a Wiener process $\{\vW_t\}_{t \ge 0}$, and let $\pi_0 \in \calP(\R^d)$ be such that $\vm_0 \defeq \E_{\vx \sim \pi_0}[\vx]$ exists. We define the following tilt-valued process, and $\norm{\vx}_{\msig}^2 \defeq \vx^\top \msig \vx$ for positive semidefinite $\msig$:
\begin{equation}\label{eq:tilt_def_anisotropic}\tag{$\msf{ASL}$-$\msf{I}$}
\begin{gathered}\vc_0 = \0_d,\quad \dd \vc_t = \mmc_t \mmc_t^\top \mathbf{m}_t \,\dd t + \mathbf{C}_t\,\dd \vW_t,\\
\text{where } \mathbf{m}_t \defeq \E_{\vx \sim \pi_t}[\vx],\quad \pi_t(\vx) \propto \exp\Par{\inprod{\vc_t}{\vx} - \frac{1}{2} \norm{\vx}_{\bs\Sigma_t}^2}\pi_0(\vx),\quad \dd\msig_t = \mmc_t \mmc_t^\top \,\dd t.\end{gathered}
\end{equation}
Note that when $\mmc_t = \id_d$ for all $t \ge 0$, \eqref{eq:tilt_def_anisotropic} reduces to \eqref{eq:tilt_def}. Similarly to the duality between Perspectives~\ref{perspective:tilt} and~\ref{perspective:density}, there is an equivalent anisotropic measure-valued process to \eqref{eq:tilt_def_anisotropic}:
\begin{equation}\label{eq:measure_def_anisotropic}\tag{$\msf{ASL}$-$\msf{II}$}
\dd \pi_t(\vx) = \inprod{\vx - \vm_t}{\mmc_t \,\dd \vW_t} \pi_t(\vx), \text{ where } \vm_t \defeq \E_{\vx \sim \pi_t}[\vx].
\end{equation}

\begin{theorem}
    The dynamics of $\pi_t$ given by \eqref{eq:tilt_def_anisotropic} and \eqref{eq:measure_def_anisotropic} are the same.
\end{theorem}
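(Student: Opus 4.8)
The plan is to mirror the proof of Theorem~\ref{thm:tilt_density}, carrying out the direction from \eqref{eq:tilt_def_anisotropic} to \eqref{eq:measure_def_anisotropic} (the isotropic case of which was left as the ``instructive exercise'' in Section~\ref{sec:measure}); the reverse direction follows from the same It\^o computation used to prove Theorem~\ref{thm:tilt_density}, with $\norm{\cdot}_2^2$ replaced by $\norm{\cdot}_{\msig_t}^2$ and $\dd\vW_t$ replaced by $\mmc_t\,\dd\vW_t$, so I will only indicate it. First I would write $\pi_t(\vx) = Z_t^{-1}\exp(h_t(\vx))\pi_0(\vx)$ with $h_t(\vx) \defeq \inprod{\vc_t}{\vx} - \half\norm{\vx}_{\msig_t}^2$ and $Z_t \defeq \int \exp(h_t(\vx))\pi_0(\vx)\,\dd \vx$, so that $\dd \log\pi_t(\vx) = \dd h_t(\vx) - \dd \log Z_t$. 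Since $\dd\vc_t = \mmc_t\mmc_t^\top\vm_t\,\dd t + \mmc_t\,\dd\vW_t$ and $\dd\msig_t = \mmc_t\mmc_t^\top\,\dd t$ are (finite-variation, martingale) increments, this gives $\dd h_t(\vx) = \inprod{\vx}{\mmc_t\mmc_t^\top\vm_t}\,\dd t - \half\norm{\vx}_{\mmc_t\mmc_t^\top}^2\,\dd t + \inprod{\vx}{\mmc_t\,\dd\vW_t}$, whose only martingale part is $\inprod{\vx}{\mmc_t\,\dd\vW_t}$.

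Next I would compute $\dd \log Z_t$ by two applications of It\^o's lemma (Lemma~\ref{lem:ito}). Differentiating $Z_t$ under the integral sign, the drift $-\half\norm{\vx}_{\mmc_t\mmc_t^\top}^2\,\dd t$ of $h_t$ is cancelled by the quadratic-variation contribution $+\half\norm{\vx}_{\mmc_t\mmc_t^\top}^2\,\dd t$ of its martingale part, leaving $\dd Z_t = \int \inprod{\vx}{\dd\vc_t}\exp(h_t(\vx))\pi_0(\vx)\,\dd\vx = Z_t\bigl(\inprod{\vm_t}{\mmc_t\mmc_t^\top\vm_t}\,\dd t + \inprod{\vm_t}{\mmc_t\,\dd\vW_t}\bigr)$, and then a second application of It\^o yields $\dd\log Z_t = \inprod{\vm_t}{\dd\vc_t} - \half\norm{\vm_t}_{\mmc_t\mmc_t^\top}^2\,\dd t$. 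As in Lemma~\ref{lem:pit_density}, one should also note that $Z_t$ is the correct normalization because $\dd\bigl(\int \pi_t(\vx)\,\dd\vx\bigr) = \inprod{\vm_t - \vm_t}{\mmc_t\,\dd\vW_t} = 0$ along \eqref{eq:measure_def_anisotropic}.

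Subtracting, $\dd \log\pi_t(\vx) = \inprod{\vx - \vm_t}{\dd\vc_t} + \half\bigl(\norm{\vm_t}_{\mmc_t\mmc_t^\top}^2 - \norm{\vx}_{\mmc_t\mmc_t^\top}^2\bigr)\,\dd t$; substituting $\dd\vc_t = \mmc_t\mmc_t^\top\vm_t\,\dd t + \mmc_t\,\dd\vW_t$ and completing the square collapses the drift to $-\half\norm{\vx - \vm_t}_{\mmc_t\mmc_t^\top}^2\,\dd t$, giving
\[
\dd \log\pi_t(\vx) = \inprod{\mmc_t^\top(\vx - \vm_t)}{\dd\vW_t} - \half\norm{\mmc_t^\top(\vx - \vm_t)}_2^2\,\dd t.
\]
A final application of It\^o's lemma to $\pi_t(\vx) = \exp(\log\pi_t(\vx))$ cancels this drift against the quadratic variation $\half\norm{\mmc_t^\top(\vx - \vm_t)}_2^2\,\dd t$, producing $\dd\pi_t(\vx) = \inprod{\vx - \vm_t}{\mmc_t\,\dd\vW_t}\pi_t(\vx)$, which is precisely \eqref{eq:measure_def_anisotropic}.

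I do not expect a genuine conceptual obstacle here: the entire proof is bookkeeping with the anisotropic quadratic forms $\norm{\cdot}_{\mmc_t\mmc_t^\top}^2$, tracking which terms are $\vx$-dependent versus absorbed into $\log Z_t$, and verifying the repeated cancellations between drift terms and quadratic-variation corrections at each It\^o step. The only points that merit a word of care are the justification of differentiation under the integral sign and of applying It\^o's lemma to the measure-valued process pointwise in $\vx$; these are handled exactly as in the isotropic setting of Section~\ref{sec:measure}, under the regularity caveats noted in the Remark of Section~\ref{sec:intro}.
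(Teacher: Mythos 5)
Your proof is correct and follows the same route as the paper's: both carry out the direction from \eqref{eq:tilt_def_anisotropic} to \eqref{eq:measure_def_anisotropic} by writing $\pi_t = Z_t^{-1}\exp(h_t)\pi_0$, applying It\^o's lemma to $Z_t$ (with the drift of $\dd h_t$ cancelling against its quadratic-variation correction), deriving $\dd\log Z_t = \inprod{\vm_t}{\dd\vc_t} - \half\norm{\vm_t}_{\mmc_t\mmc_t^\top}^2\,\dd t$, completing the square in $\dd\log\pi_t$, and exponentiating. The intermediate expressions and final cancellations match the paper's exactly.
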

\begin{proof}
The direction that starts from \eqref{eq:measure_def_anisotropic} and derives \eqref{eq:tilt_def_anisotropic} is a straightforward generalization of the proofs of Lemma~\ref{lem:pit_density} and Theorem~\ref{thm:tilt_density}. For convenience to the reader, in this proof we provide the opposite direction for the more general processes \eqref{eq:tilt_def_anisotropic}, \eqref{eq:measure_def_anisotropic}.

Let us begin with \eqref{eq:tilt_def_anisotropic}. Define the normalization constant and its renormalizing potential:
\[Z_t \defeq \int \exp\Par{\inprod{\vc_t}{\vx} - \half\norm{\vx}_{\msig_t}^2}\pi_0(\vx)\,\dd\vx,\quad h_t(\vx) \defeq \inprod{\vc_t}{\vx} - \half\norm{\vx}_{\msig_t}^2.\]
Then by applying It\^o's lemma (Lemma~\ref{lem:ito}) to $Z_t = \int \exp(h_t(\vx)) \pi_0(\vx) \,\dd\vx$, where we note that the diffusion term in $\dd h_t(\vx)$ is $\inprod{\mmc_t^\top\vx}{\dd\vW_t}$, we have
\begin{align*}
\dd Z_t &= \int \dd \exp(h_t(\vx))\pi_0(\vx) \, \dd \vx\\
&= \int \Par{-\half \norm{\vx}_{\mmc_t \mmc_t^\top}^2 \,\dd t + \half \norm{\vx}_{\mmc_t \mmc_t^\top}^2 \,\dd t} \exp\Par{h_t(\vx)}\pi_0(\vx) \,\dd\vx \\
&\qquad+ \int \inprod{\vx}{\dd \vc_t}\exp\Par{h_t(\vx)}\pi_0(\vx)\,\dd\vx = \int \inprod{\vx}{\dd \vc_t}\exp\Par{h_t(\vx)}\pi_0(\vx)\,\dd\vx.
\end{align*}
Thus, applying It\^o's lemma again,
\begin{align*}
\dd \log Z_t &= \frac 1 {Z_t}\Par{\int \inprod{\vx}{\dd \vc_t}\exp\Par{h_t(\vx)}\pi_0(\vx)\,\dd\vx} - \frac 1 {2Z_t^2}\norm{\int \vx \exp\Par{h_t(\vx)}\pi_0(\vx)\,\dd\vx}_{\mmc_t\mmc_t^\top}^2 \dd t \\
&= \inprod{\vm_t}{\dd \vc_t} - \half\norm{\vm_t}_{\mmc_t \mmc_t^\top}^2 \,\dd t.
\end{align*}
Now because $\log \pi_t(\vx) = \inprod{\vc_t}{\vx} - \half\norm{\vx}_{\msig_t}^2 + \log(\pi_0(\vx)) - \log Z_t$,
\begin{align*}
\dd \log \pi_t(\vx) &= \inprod{\vx}{\dd \vc_t} - \half\norm{\vx}_{\mmc_t \mmc_t^\top}^2 \,\dd t - \dd \log Z_t \\
&= \inprod{\vx - \vm_t}{\dd \vc_t} + \Par{\half\norm{\vm_t}_{\mmc_t\mmc_t}^2 - \half\norm{\vx}_{\mmc_t\mmc_t}^2} \dd t \\
&= \inprod{\vx - \vm_t}{\dd \vc_t - \mmc_t\mmc_t^\top \vm_t \,\dd t} - \half\norm{\vx - \vm_t}_{\mmc_t \mmc_t^\top}^2 \,\dd t \\
&= \inprod{\vx - \vm_t}{\mmc_t \,\dd \vW_t} - \half\norm{\vx - \vm_t}_{\mmc_t \mmc_t^\top}^2 \,\dd t.
\end{align*}
Applying It\^o's lemma one final time then yields the SDE \eqref{eq:measure_def_anisotropic} as claimed.
\end{proof}
\section{Restricted Gaussian dynamics}\label{app:rgo}

\subsection{The perspective from entropic stability}
In this appendix, we discuss an algorithmic application of stochastic localization: the restricted Gaussian dynamics \cite{LeeST21}. Our presentation follows the \emph{localization schemes} analysis framework of \cite{ChenE22}, and in particular, this appendix serves to replicate the results of \cite{ChenE22} that are specialized to the restricted Gaussian dynamics. We organize this appendix as follows.

\begin{enumerate}[label=(\arabic*)]
    \item Appendix~\ref{app:schemes} introduces the concept of a localization process and describes its induced localization dynamics Markov chain. Specializing this framework to the stochastic localization process \eqref{eq:tilt_def} gives rise to the restricted Gaussian dynamics \eqref{eq:rgd}.

    \item Appendix~\ref{app:stability} develops tools for proving that localization processes conserve entropy. We describe the notion of entropic stability (Definition~\ref{def:ent_stable}) from \cite{ChenE22}, give a sufficient condition for this notion (Lemma~\ref{lem:tilt-stability}), and show how it implies entropy conservation (Lemma~\ref{lem:conservation}).

    \item Appendix~\ref{app:lsi} shows how conservation of entropy, in the sense of Lemma~\ref{lem:conservation}, implies a modified log-Sobolev inequality. We then apply this machinery to the restricted Gaussian dynamics \eqref{eq:rgd} to derive a mixing time bound for strongly log-concave stationary measures.
\end{enumerate}

\subsubsection{Localization schemes}\label{app:schemes}

The restricted Gaussian dynamics is an instance of the localization schemes framework of \cite{ChenE22}, specialized to the stochastic localization process \eqref{eq:tilt_def}. We first require a general definition.

\begin{definition}\label{def:loc-proc}
   A \emph{localization process} is a measure-valued stochastic process $\{\pi_t\}_{t \geq 0}$ on a state space $\Omega$, which satisfies the following conditions.
    \begin{enumerate}[label=(\arabic*)]
        \item $\pi_t$ is a probability measure over $\Omega$ for all $t$ almost surely.

        \item For all $A \subseteq \Omega$, $t \mapsto \pi_t(A)$ is a martingale.

        \item For all $A \subseteq \Omega$, $\lim_{t \to \infty} \pi_t(A) \in \{0, 1\}$ almost surely.
    \end{enumerate}
    A \emph{localization scheme} maps $\pi$, a measure over $\Omega$, to a localization process $\{\pi_t\}_{t \geq 0}$ with initial condition $\pi_0 = \pi$.     Lastly, the \emph{localization dynamics} associated with the localization process and some fixed time $T > 0$ is the Markov chain with transition kernel defined as
    \begin{equation}\label{eq:kernel_loc}\begin{aligned}
        \msf P_{T}^\pi(A \mid \omega) = \E \Brack{\frac{\pi_T(\omega) \pi_T(A)}{\pi(\omega)} },\text{ for all } \omega \in \Omega,\; A \subseteq \Omega.
    \end{aligned}\end{equation}
\end{definition}

We recall a basic fact about localization dynamics.
\begin{lemma}\label{lem:pi_stationary}
For any $T > 0$, $\pi$ is stationary for the transition kernel $\sfP_T^\pi$.
\end{lemma}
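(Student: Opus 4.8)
The plan is to verify the defining identity of stationarity directly: for every measurable $A \subseteq \Omega$, we must show $\int_\Omega \sfP_T^\pi(A \mid \omega)\,\pi(\dd\omega) = \pi(A)$. Substituting the definition \eqref{eq:kernel_loc} of the transition kernel, the left-hand side becomes
\[\int_\Omega \E\Brack{\frac{\pi_T(\omega)\pi_T(A)}{\pi(\omega)}}\pi(\dd\omega).\]
Since all quantities involved are nonnegative (densities and measures of sets), Tonelli's theorem permits exchanging the $\omega$-integral with the expectation, yielding
\[\E\Brack{\pi_T(A)\int_\Omega \frac{\pi_T(\omega)}{\pi(\omega)}\,\pi(\dd\omega)}.\]

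The next step is to evaluate the inner integral. Because $\pi_t(A)$ is a martingale with $\pi_0 = \pi$ (conditions (2) and the initial condition in Definition~\ref{def:loc-proc}), we have $\E[\pi_T(B)] = \pi(B)$ for all measurable $B$; in particular $\pi_T$ is $\pi$-almost surely absolutely continuous with respect to $\pi$, so the ratio $\pi_T(\omega)/\pi(\omega)$ is the Radon--Nikodym density of $\pi_T$ with respect to $\pi$, well-defined $\pi$-a.e. Hence $\int_\Omega \frac{\pi_T(\omega)}{\pi(\omega)}\,\pi(\dd\omega) = \int_\Omega \pi_T(\dd\omega) = 1$, using that $\pi_T$ is almost surely a probability measure (condition (1)). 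This reduces the expression to $\E[\pi_T(A)]$, which equals $\pi_0(A) = \pi(A)$ by the martingale property once more, completing the proof.

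I do not expect any substantive obstacle here: the only point requiring a word of care is the interchange of integration and expectation, which is immediate from nonnegativity via Tonelli, and the minor measure-theoretic remark that the ratio $\pi_T/\pi$ is meaningful $\pi$-almost everywhere (a consequence of the martingale property applied to $\pi$-null sets). Everything else is a one-line computation.
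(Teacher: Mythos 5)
Your proof is correct and follows the same route as the paper: substitute the definition of the kernel, swap integral and expectation, cancel $\pi$, use that $\pi_T$ integrates to $1$, and finish with the martingale property. The only differences are cosmetic — you invoke Tonelli (citing nonnegativity) rather than Fubini and add a remark on a.e.\ absolute continuity, which is a slightly more careful justification of the interchange but not a different argument.
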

\begin{proof}
By Fubini's theorem,
\begin{align*}
    \int_\Omega \msf P_T^\pi(A \mid \omega) \pi(\omega)\,\dd\omega &= \int_\Omega \E \Brack{\frac{\pi_T(\omega) \pi_T(A)}{\pi(\omega)} } \pi(\omega)\,\dd\omega =  \E \Brack{\int_\Omega \frac{\pi_T(\omega) \pi_T(A)}{\pi(\omega)}  \pi(\omega)\,\dd\omega} \\
    &= \E\Brack{ \pi_T(A)\int_\Omega \pi_T(\omega)\,\dd\omega } = \pi(A),
\end{align*}
where we used properties (1) and (2) in Definition~\ref{def:loc-proc}.
\end{proof}

We have seen a localization scheme for measures $\pi$ on $\Omega \defeq \R^d$: the stochastic localization process \eqref{eq:tilt_def}, \eqref{eq:density_def}. Condition (1) of Definition~\ref{def:loc-proc} is Lemma~\ref{lem:pit_density}, condition (2) is immediate from the dynamics \eqref{eq:density_def}, and condition (3) follows from the form of $\pi_t$ in \eqref{eq:tilt_def}.

We next define the restricted Gaussian dynamics \cite{LeeST21}, the focus of the rest of the section. For a measure $\pi$ on $\R^d$, the restricted Gaussian dynamics have transitions $\vx \to \vx'$ given by
\begin{equation}\label{eq:rgd}\tag{$\msf{RGD}$}
\begin{aligned}
\vy \sim \Nor(\vx, \eta \id_d),\quad \text{Law}(\vx') \propto \exp\Par{-\frac 1{2\eta}\norm{\vx' - \vy}_2^2}\pi(\vx'),
\end{aligned}
\end{equation}
for some $\eta > 0$. We observe that \eqref{eq:rgd} is an instance of localization dynamics, following \cite{ChenE22}. 
\begin{theorem}\label{thm:rgd_is_localization}
The Markov chain with transition kernel defined by \eqref{eq:rgd} is the same as the localization dynamics associated with the localization scheme given by \eqref{eq:tilt_def}, \eqref{eq:density_def}, with $T = \frac 1 \eta$.
\end{theorem}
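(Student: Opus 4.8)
The goal is to show that the transition kernel of \eqref{eq:rgd} (with step size $\eta$) coincides with the localization dynamics kernel $\sfP_T^\pi$ from \eqref{eq:kernel_loc} applied to the stochastic localization process, taking $T = \tfrac1\eta$. The plan is to unfold both kernels into explicit integrals over $\R^d$ and match them. First I would compute the \eqref{eq:rgd} kernel directly: integrating out the intermediate point $\vy$, the transition density from $\vx$ to $\vx'$ is proportional to $\int \exp(-\tfrac1{2\eta}\norm{\vx-\vy}_2^2)\exp(-\tfrac1{2\eta}\norm{\vx'-\vy}_2^2)\pi(\vx')\,\dd\vy$, with the normalization in $\vx'$ being $\int \exp(-\tfrac1{2\eta}\norm{\vx'-\vy}_2^2)\pi(\vx')\,\dd\vx'$ inside the $\vy$-integral. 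Completing the square in $\vy$ shows the Gaussian convolution contributes a factor depending only on $\norm{\vx-\vx'}_2^2$ (specifically $\propto \exp(-\tfrac1{4\eta}\norm{\vx-\vx'}_2^2)$) times a $\vx'$-reweighting, so that the \eqref{eq:rgd} kernel has an explicit closed form.

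Next I would compute the localization dynamics kernel. By Lemma~\ref{lem:pit_density} and Theorem~\ref{thm:tilt_density}, $\pi_T(\vx) \propto \exp(\inprod{\vc_T}{\vx} - \tfrac T2 \norm{\vx}_2^2)\pi(\vx)$ with normalizing constant $Z_T \defeq \int \exp(\inprod{\vc_T}{\vx}-\tfrac T2\norm{\vx}_2^2)\pi(\vx)\,\dd\vx$. Substituting into \eqref{eq:kernel_loc}, $\sfP_T^\pi(\vx' \mid \vx) = \E\big[\tfrac{1}{Z_T^2}\exp(\inprod{\vc_T}{\vx}-\tfrac T2\norm{\vx}_2^2)\exp(\inprod{\vc_T}{\vx'}-\tfrac T2\norm{\vx'}_2^2)\big]$, where the expectation is over the law of $\vc_T$. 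By Theorem~\ref{thm:tilt_posterior}, $\vc_T = T\vz + \vB_T$ for $\vz \sim \pi$ and an independent Wiener process, so $\vc_T$ has marginal law $\E_{\vz\sim\pi}[\Nor(T\vz, T\id_d)]$; equivalently $\vc_T/T \sim \Nor(\vz, \tfrac1T \id_d)$ with $\vz\sim\pi$, which (using $T = 1/\eta$) is exactly the law of the intermediate variable $\vy$ in \eqref{eq:rgd} when $\vx'$ is drawn from $\pi$ — but I must be careful here since in \eqref{eq:rgd} we condition on $\vx$, not draw it from $\pi$. The cleaner route is to just carry out the $\vc_T$-expectation directly using its density: writing $q(\vc)$ for the density of $\vc_T$, the key observation is that $\tfrac1{Z_T}\exp(\inprod{\vc}{\vx'} - \tfrac T2\norm{\vx'}_2^2)\pi(\vx') \cdot q(\vc)$ is, by Bayes, the joint density of $(\vx', \vc_T)$ when $\vx'\sim\pi$; so $\tfrac1{Z_T}q(\vc) = \pi_0(\vx'\mid \vc_T=\vc)^{-1}\cdot(\text{joint}) / \exp(\dots)$... — concretely, $\E[\tfrac1{Z_T^2}\exp(\inprod{\vc_T}{\vx})\exp(\inprod{\vc_T}{\vx'})\exp(-\tfrac T2(\norm{\vx}_2^2+\norm{\vx'}_2^2))] = \pi(\vx')\int q(\vc)\tfrac1{Z_T}\exp(\inprod{\vc}{\vx}-\tfrac T2\norm{\vx}_2^2)\,\dd\vc$ after pulling the $\vx'$-posterior identity through. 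Then I substitute the Gaussian-mixture form of $q$ and reduce to the same Gaussian integral as in the first step.

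The main obstacle I anticipate is bookkeeping the normalization constants and the change of variables $\vc_T \leftrightarrow \vy$ (the scaling $\vc_T = \vy/\eta$, or $\vy = \eta \vc_T$, together with the Jacobian $\eta^{-d}$ and the relation $T = 1/\eta$) so that both kernels land on literally the same closed form $\propto \exp(-\tfrac1{4\eta}\norm{\vx-\vx'}_2^2)\,\pi(\vx')$ up to a $\vx'$-independent reweighting, i.e.\ matching both the $\vx'$-dependent factors and confirming the $\vx$-dependent normalizers agree. A slick way to sidestep most of this: show directly that \eqref{eq:rgd} has the alternative description ``$\vx' \sim \pi_0(\cdot \mid \vc_T)$ where $\vc_T$ is drawn from its posterior $\vc_T \mid \vx$ under the joint model of Perspective~\ref{perspective:posterior} conditioned on the first coordinate being $\vx$'' — but since Perspective~\ref{perspective:posterior} treats $\vx$ as the latent and $\vc_T$ as the observation, and \eqref{eq:rgd} fixes $\vx$ and resamples, the honest cleanest path is the explicit integral computation above; it is routine but requires care. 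I would present it by computing the \eqref{eq:rgd} kernel, then the localization kernel, and remarking that both equal $C(\vx)\exp(-\tfrac1{4\eta}\norm{\vx-\vx'}_2^2)\pi(\vx')$ for the same normalizer $C(\vx)$, hence they coincide.
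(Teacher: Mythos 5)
Your proposed approach has a concrete gap, and the ``slick'' route you dismissed at the end is in fact the paper's actual (and correct) argument.

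The error in your main plan is the claimed closed form for the \eqref{eq:rgd} kernel. After integrating out the intermediate $\vy$, that kernel is
\[
\sfP(\vx'\mid\vx) \propto \pi(\vx')\int \frac{\exp\Par{-\frac{1}{2\eta}\norm{\vx-\vy}_2^2}\exp\Par{-\frac{1}{2\eta}\norm{\vx'-\vy}_2^2}}{Z(\vy)}\,\dd\vy,
\quad Z(\vy) \defeq \int \exp\Par{-\tfrac{1}{2\eta}\norm{\vz-\vy}_2^2}\pi(\vz)\,\dd\vz.
\]
Completing the square in $\vy$ factors the numerator as $\exp(-\tfrac{1}{4\eta}\norm{\vx-\vx'}_2^2)\exp(-\tfrac{1}{\eta}\norm{\vy - \tfrac{\vx+\vx'}{2}}_2^2)$, but the $\vy$-dependent normalizer $Z(\vy)$ remains inside the integral, so the leftover $\vy$-integral depends on the midpoint $\tfrac{\vx+\vx'}{2}$ --- not just on $\vx$ and $\norm{\vx-\vx'}_2$. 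For example with $\pi = \Nor(\0_d,\id_d)$ the one-step law of $\vx'\mid\vx$ is $\Nor(\tfrac{1}{\eta+1}\vx,\ \tfrac{\eta(\eta+2)}{(\eta+1)^2}\id_d)$, which is \emph{not} proportional to $\exp(-\tfrac{1}{4\eta}\norm{\vx-\vx'}_2^2)\pi(\vx')$ for any $\vx$-dependent prefactor $C(\vx)$. So ``matching both to $C(\vx)\exp(-\tfrac{1}{4\eta}\norm{\vx-\vx'}_2^2)\pi(\vx')$'' cannot succeed, and the brute-force integral comparison as you sketched it would not close.

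What you should do instead is exactly the ``alternative description'' you flagged and then set aside: it is not blocked by Perspective~\ref{perspective:posterior}'s asymmetry, and it is the paper's proof. By Theorem~\ref{thm:tilt_posterior}, $\pi_T$ has the same law as the posterior $\pi(\cdot\mid\vc)$ for the joint $\rho(\vw,\vc)$ of $\vw\sim\pi$, $\vc\sim\Nor(T\vw,T\id_d)$, with $\vc$-marginal $\mu$. Then two applications of Bayes turn \eqref{eq:kernel_loc} into
\[
\sfP_T^\pi(\vx'\mid\vx) = \E_{\vc\sim\mu}\Brack{\frac{\pi(\vx\mid\vc)\,\pi(\vx'\mid\vc)}{\pi(\vx)}} = \int \mu(\vc\mid\vx)\,\pi(\vx'\mid\vc)\,\dd\vc,
\]
i.e.\ sample $\vc\sim\mu(\cdot\mid\vx)=\Nor(T\vx,T\id_d)$, then $\vx'\sim\pi(\cdot\mid\vc)$. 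Setting $\vy=\vc/T$ and $\eta=1/T$ gives $\vy\sim\Nor(\vx,\eta\id_d)$ and $\vx'\propto\exp(-\tfrac{1}{2\eta}\norm{\vx'-\vy}_2^2)\pi(\vx')$, which is \eqref{eq:rgd} verbatim. There is no need to compute a closed form for either kernel --- the whole point of the Bayes manipulation is to exhibit the two-stage sampling description, bypassing the Gaussian-mixture normalizer entirely.
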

\begin{proof}
Theorem~\ref{thm:tilt_posterior} shows that $\pi_T$ in \eqref{eq:tilt_def}, \eqref{eq:density_def} can be equivalently described as follows: it is the posterior distribution $\pi(\cdot \mid \vc)$, where $\vw \sim \pi$ and $\vc \sim \Nor(T\vw, T\id_d)$. For notational convenience in this proof, let $\rho$ denote the joint distribution of $(\vw, \vc)$, let $\mu$ denote the $\vc$-marginal of $\rho$, and let $\pi(\cdot \mid \vc)$, $\mu(\cdot \mid \vw)$ denote the conditional laws of one marginal of $\rho$ given the other.

We now compute the transition density from $\vx$ to $\vx'$ according to \eqref{eq:kernel_loc}:
\begin{align*}
\sfP_T^\pi(\vx' \mid \vx) &= \E_{(\vw, \vc) \sim \rho}\Brack{\frac{\pi(\vx \mid \vc)\pi(\vx'\mid \vc)}{\pi(\vx)}} = \E_{\vc \sim \mu}\Brack{\frac{\rho(\vx, \vc)\pi(\vx'\mid \vc)}{\mu(\vc)\pi(\vx)}} \\
&= \int \frac{\rho(\vx, \vc)\pi(\vx'\mid \vc)}{\pi(\vx)} \,\dd \vc = \int \mu(\vc \mid \vx)\pi(\vx' \mid \vc)\,\dd\vc.
\end{align*}
Thus, an equivalent way to perform this transition is to sample $\vc \sim \mu(\cdot \mid \vx) = \Nor(T\vx, T\id_d)$ and then $\vx' \sim \pi(\cdot \mid \vc)$.  Under the variable transformation $\vy \defeq \frac 1 T \vc$ and $\eta \defeq \frac 1 T$, it is equivalent to sample $\vy \sim \Nor(\vx, \eta \id_d)$ and then $\vx' \sim \pi(\cdot \mid T\vy)$. By Bayes' theorem,
\begin{align*}
\pi(\vx' \mid T\vy) &\propto \rho(\vx', T\vy) \propto \exp\Par{-\frac{1}{2T}\norm{T\vx' - T\vy}_2^2}\pi(\vx') = \exp\Par{-\frac{1}{2\eta}\norm{\vx'-\vy}_2^2}\pi(\vx').
\end{align*}
This equivalent sampling process is exactly as described in \eqref{eq:rgd}.
\end{proof}

The rest of the section establishes a mixing time estimate on the dynamics \eqref{eq:rgd}.

\subsubsection{Entropy conservation from entropic stability}\label{app:stability}

In this section, we use the notion of \emph{entropic stability} to establish that entropy is conserved, in a precise sense, along the localization process $\{\pi_t\}_{t \ge 0}$ given by \eqref{eq:tilt_def}.
Before proceeding, we introduce a suite of definitions, which are adapted from~\cite[Section 3.2.1]{ChenE22}.
\begin{definition}[Barycentres]
    Denote the \emph{barycentre} $\mbf b: \mc P(\R^d) \to \R^d$ of a measure as $\mbf b(\pi) = \E_\pi[\mbf x]$.
\end{definition}

\begin{definition}[Exponential tilt]
    Define the \emph{exponential tilt operator} $\mc T: \R^d \times \mc P(\R^d) \to \mc P(\R^d)$ as:
    \begin{align*}
        \mc T_{\mbf y} \pi(\mbf x) \propto \exp(\langle\mbf y, \mbf x\rangle) \pi(\mbf x),
    \end{align*}
    whenever $\exp(\inprod{\vy}{\cdot})\pi$ is integrable over $\R^d$.
\end{definition}

\begin{definition}[Entropic stability]\label{def:ent_stable}
We say that    $\pi \in \mc P(\R^n)$ is $\alpha$-entropically stable if
    \begin{align*}
        \half\norm{\mbf b(\mc T_{\mbf y} \pi) - \mbf b(\pi)}_2^2 \leq \alpha \KL{\mc T_{\mbf y} \pi}{ \pi}, \text{ for all } \mbf y \in \R^d.
    \end{align*}
\end{definition}

We remark that Definition~\ref{def:ent_stable} is used in greater generality in \cite{ChenE22}, with alternative distance functions on the left-hand side. For example, a variant where $\half\norm{\cdot - \cdot}_2^2$ is replaced with a modified KL divergence is used to study the Glauber dynamics on the hypercube.

Let us record an auxiliary lemma. In its statement, we denote the covariance of $\pi \in \calP(\R^d)$ by
\[\Cov(\pi) \defeq \E_{\vx \sim \pi}\Brack{(\vx - \vb(\pi))(\vx - \vb(\pi))^\top}. \]

\begin{lemma}[{\cite[Lemma 1]{BubeckE19}}]\label{lem:kl-deriv}
    Let $\pi \in \mc P(\R^d)$ be a measure such that $\Cov(\pi)$ exists and is everywhere invertible, and that the cumulant generating function (a.k.a.\ log-Laplace transform)
    \begin{align*}
        \chi(\vy) = \log \E_{\pi}[\exp(\langle \vy, \cdot \rangle)]
    \end{align*}
    exists.
    There exists a unique function $\vy: \calK \coloneqq \operatorname{int}\bigl(\operatorname{conv} (\operatorname{supp}(\pi))\bigr) \to \R^d$ such that for all $\vx \in \calK$,
    \begin{align*}
        \mbf b(\mc T_{\vy(\vx)} \pi) = \mbf x,
    \end{align*}
    and furthermore if $\Phi(\vx) \defeq \KL{\mc T_{\vy(\vx)} \pi}{\pi}$, $\Phi$ and $\chi$ are convex conjugates, and on $\calK$,
    \begin{align*}
        \nabla \Phi (\vx) = \vy(\vx), \quad \nabla^2 \Phi(\mbf x) =  \Cov(\mc T_{\vy(\vx)} \pi)^{-1}.
    \end{align*}
\end{lemma}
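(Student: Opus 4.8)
The plan is to recognize this as the classical Legendre duality between the log-Laplace transform $\chi$ of $\pi$ and the rate function $\Phi$ of the associated exponential family $\{\mc T_{\vy}\pi\}_{\vy}$, and to read off the two gradient identities from the differentiable structure of conjugate convex functions. Throughout, write $\Theta \defeq \operatorname{int}(\operatorname{dom}\chi)$, which contains $\0_d$ and is nonempty by hypothesis.

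\textbf{Step 1 (analytic properties of $\chi$).} Differentiating under the integral sign on $\Theta$ (justified by local uniform integrability of $\exp\langle\vy,\cdot\rangle$ and its derivatives on compact subsets of $\Theta$) gives, for $\vy \in \Theta$,
\[\nabla\chi(\vy) = \frac{\E_\pi[\vx\exp\langle\vy,\vx\rangle]}{\E_\pi[\exp\langle\vy,\vx\rangle]} = \mbf b(\mc T_{\vy}\pi),\qquad \nabla^2\chi(\vy) = \Cov(\mc T_{\vy}\pi).\]
Since $\exp\langle\vy,\cdot\rangle>0$, the support of $\mc T_{\vy}\pi$ equals $\supp(\pi)$, so the affine hull of the support is unchanged by tilting; hence $\Cov(\pi)$ invertible implies $\Cov(\mc T_{\vy}\pi)$ is invertible for every $\vy\in\Theta$, so $\nabla^2\chi\succ\0$ and $\chi$ is strictly convex and $C^\infty$ on $\Theta$.

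\textbf{Step 2 (Fenchel identity for $\Phi$).} Since $(\mc T_{\vy}\pi/\pi)(\vz) = \exp(\langle\vy,\vz\rangle - \chi(\vy))$ pointwise, taking $\log$ and integrating against $\mc T_{\vy}\pi$ gives, for all $\vy\in\Theta$,
\[\KL{\mc T_{\vy}\pi}{\pi} = \langle\vy,\mbf b(\mc T_{\vy}\pi)\rangle - \chi(\vy).\]
\textbf{Step 3 (the map $\vy(\cdot)$).} By Step 1, $\nabla\chi:\Theta\to\R^d$ is injective (strict convexity) and, since $\nabla\chi(\vy)$ is the barycentre of a measure supported on $\supp(\pi)$, its image lies in $\operatorname{conv}(\supp(\pi))$; a short argument (no direction $\vu$ can be a recession-like direction of $\mc T_{\vy}\pi$) shows the image lies in $\calK$. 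Conversely, fix $\vx\in\calK$ and consider $F(\vy)\defeq\langle\vx,\vy\rangle-\chi(\vy)$ on $\Theta$: because $\vx$ is interior to $\operatorname{conv}(\supp\pi)$, $\pi$ places positive mass strictly on both sides of every hyperplane through $\vx$, which forces $F(\vy)\to-\infty$ as $\vy$ tends to $\infty$ within $\Theta$ or to $\partial(\operatorname{dom}\chi)$; hence $F$ attains an interior maximum at some $\vy^\star\in\Theta$ with $\nabla\chi(\vy^\star)=\vx$. Thus $\nabla\chi$ is a $C^1$ bijection $\Theta\to\calK$ with positive-definite derivative, so it is a diffeomorphism; define $\vy(\vx)\defeq(\nabla\chi)^{-1}(\vx)$, the unique element of $\R^d$ with $\mbf b(\mc T_{\vy(\vx)}\pi)=\vx$.

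\textbf{Step 4 (conclusion).} The attained supremum gives $\chi^\star(\vx) = \langle\vx,\vy(\vx)\rangle - \chi(\vy(\vx))$ for $\vx\in\calK$, which by Step 2 and $\mbf b(\mc T_{\vy(\vx)}\pi)=\vx$ equals $\KL{\mc T_{\vy(\vx)}\pi}{\pi} = \Phi(\vx)$; so $\Phi = \chi^\star$ on $\calK$, i.e.\ $\Phi$ and $\chi$ are convex conjugates. Differentiating $\Phi(\vx) = \langle\vx,\vy(\vx)\rangle - \chi(\vy(\vx))$ and using $\nabla\chi(\vy(\vx))=\vx$ to cancel the $D\vy(\vx)$ terms yields $\nabla\Phi(\vx) = \vy(\vx)$, and then the inverse function theorem gives $\nabla^2\Phi(\vx) = D\vy(\vx) = [\nabla^2\chi(\vy(\vx))]^{-1} = \Cov(\mc T_{\vy(\vx)}\pi)^{-1}$.

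The main obstacle is Step 3, specifically surjectivity of $\nabla\chi$ onto $\calK$: one must confirm that the Fenchel problem $\sup_{\vy}F(\vy)$ has an \emph{interior} maximizer exactly when $\vx\in\calK$, which requires the convex-geometry input that an interior point of $\operatorname{conv}(\supp\pi)$ sees mass on both sides of every hyperplane, together with care about the behavior of $\chi$ approaching $\partial(\operatorname{dom}\chi)$. All other steps are routine differentiation. Alternatively, one may invoke the standard theory of minimal/steep exponential families (e.g.\ Barndorff-Nielsen, or Brown's monograph) for Step 3 and cite Steps 1, 2, 4 as the explicit specialization.
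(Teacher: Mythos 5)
Your proof is correct and takes essentially the same approach as the paper's (both compute $\nabla\chi=\vb(\mc T_{\vy}\pi)$, $\nabla^2\chi=\Cov(\mc T_{\vy}\pi)$, verify $\Phi=\chi^\star$ via the Fenchel identity, and then invoke standard duality for the gradient and Hessian formulas). The only substantive difference is that the paper explicitly defers existence and uniqueness of the map $\vy(\cdot)$ to \cite{BubeckE19}, whereas your Step~3 supplies that argument directly (steepness of $\chi$ at the boundary of $\calK$ via the mass-on-both-sides-of-a-hyperplane observation), making your version self-contained.
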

\begin{proof}
We provide a brief sketch here, deferring more details to \cite{BubeckE19}. The key observation is 
\begin{align*}
\nabla \chi(\vy) = \vb(\tilt_{\vy}\pi),\quad \nabla^2 \chi(\vy) = \Cov\Par{\tilt_{\vy}\pi}.  
\end{align*}
which follows from a direct computation (and because the mean and covariance are the first two cumulants). Now, for any $\vy \in \R^d$, first-order optimality shows
\begin{align*}
\vb(\tilt_{\vy}\pi) \in \arg\max_{\vx \in \R^d} \inprod{\vx}{\vy} - \chi(\vy),
\end{align*}
and we can check that
\begin{align*}
\chi^*(\vb(\tilt_{\vy}\pi)) &= \inprod{\vy}{\vb(\tilt_{\vy}\pi)} - \chi(\vy) \\
&= \int \inprod{\vy}{\vx} \tilt_{\vy} \pi(\vx) \,\dd\vx - \chi(\vy) \\
 &= \int \tilt_{\vy} \pi(\vx) \log\Par{\frac{\tilt_{\vy} \pi(\vx)}{\pi(\vx)}}\,\dd\vx = \KL{\tilt_{\vy} \pi}{\pi}.
\end{align*}
This proves the conjugacy of $\chi$ and $\Phi$. The fact that $\nabla \Phi$ is the inverse mapping of $\vy \to \vb(\tilt_{\vy} \pi) = \nabla \chi(\vy)$, and that $\nabla^2 \Phi(\vx)$ is the inverse of $\nabla^2 \chi(\vy(\vx))$, then follow from standard properties of convex conjugates. We defer the proof of uniqueness of $\vy$ to \cite{BubeckE19}.
\end{proof}

Using Lemma~\ref{lem:kl-deriv}, we give a sufficient condition for $\alpha$-entropic stability.

\begin{lemma}[{\cite[Lemma 40]{ChenE22}}]\label{lem:tilt-stability}
If $\Cov(\calT_{\vy} \pi) \preceq \alpha \id_d$ for all $\vy \in \R^d$, 
    $\pi$ is $\alpha$-entropically stable.
\end{lemma}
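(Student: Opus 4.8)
The plan is to combine the convexity structure from Lemma~\ref{lem:kl-deriv} with the hypothesis that all tilts of $\pi$ have covariance bounded above by $\alpha \id_d$. Recall that we want to show $\half\norm{\vb(\calT_{\vy}\pi) - \vb(\pi)}_2^2 \le \alpha \KL{\calT_{\vy}\pi}{\pi}$ for every $\vy \in \R^d$. Fix such a $\vy$, and abbreviate $\vx_1 \defeq \vb(\pi) = \vb(\calT_{\0_d}\pi)$ and $\vx_2 \defeq \vb(\calT_{\vy}\pi)$; by Lemma~\ref{lem:kl-deriv}, these are the points in $\calK$ for which $\vy(\vx_1) = \0_d$ and $\vy(\vx_2) = \vy$, and $\Phi(\vx) \defeq \KL{\calT_{\vy(\vx)}\pi}{\pi}$ satisfies $\nabla\Phi(\vx_1) = \0_d$, $\nabla\Phi(\vx_2) = \vy$, and $\nabla^2\Phi(\vx) = \Cov(\calT_{\vy(\vx)}\pi)^{-1} \succeq \frac 1 \alpha \id_d$ on $\calK$ (the last inequality is exactly the hypothesis $\Cov(\calT_{\vy}\pi) \preceq \alpha \id_d$, inverted). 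So $\Phi$ is $\frac 1 \alpha$-strongly convex on $\calK$, and $\vx_1$ is its global minimizer since $\nabla\Phi(\vx_1) = \0_d$; in particular $\Phi(\vx_1) = \KL{\pi}{\pi} = 0$.

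The key step is then a standard strong-convexity estimate. Since $\Phi$ is $\frac 1 \alpha$-strongly convex and $\nabla\Phi(\vx_1) = \0_d$, we have for the minimizer $\vx_1$ the growth bound
\[
\Phi(\vx_2) \ge \Phi(\vx_1) + \inprod{\nabla\Phi(\vx_1)}{\vx_2 - \vx_1} + \frac 1 {2\alpha}\norm{\vx_2 - \vx_1}_2^2 = \frac 1 {2\alpha}\norm{\vx_2 - \vx_1}_2^2.
\]
Unwinding the definitions, $\Phi(\vx_2) = \KL{\calT_{\vy}\pi}{\pi}$ and $\norm{\vx_2 - \vx_1}_2^2 = \norm{\vb(\calT_{\vy}\pi) - \vb(\pi)}_2^2$, so rearranging gives exactly $\half\norm{\vb(\calT_{\vy}\pi) - \vb(\pi)}_2^2 \le \alpha\KL{\calT_{\vy}\pi}{\pi}$, which is the definition of $\alpha$-entropic stability.

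The one place to be slightly careful — and the main (minor) obstacle — is the domain and regularity bookkeeping needed to invoke Lemma~\ref{lem:kl-deriv} and to integrate the Hessian lower bound: we need $\Cov(\pi)$ to exist and be invertible and the log-Laplace transform $\chi$ to be finite (so that $\Phi$ and its conjugacy with $\chi$ are well-defined on $\calK$), and we need $\vb(\pi), \vb(\calT_{\vy}\pi) \in \calK$, which holds because barycentres of (tilts of) $\pi$ lie in the interior of the convex hull of the support. If $\Cov(\pi)$ is degenerate, one restricts to the affine hull of the support, or argues by a limiting/approximation argument; in the intended application ($\pi$ arising from strongly log-concave measures) these conditions are automatic. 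Given these, the strong-convexity inequality $\Phi(\vx_2) \ge \Phi(\vx_1) + \frac1{2\alpha}\norm{\vx_2-\vx_1}_2^2$ follows by integrating $\nabla^2\Phi \succeq \frac1\alpha\id_d$ along the segment from $\vx_1$ to $\vx_2$ (which lies in the convex set $\calK$), completing the proof.
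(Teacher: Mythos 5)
Your proof is correct and follows essentially the same route as the paper's: both rely on Lemma~\ref{lem:kl-deriv} to identify $\nabla^2\Phi = \Cov(\calT_{\vy(\cdot)}\pi)^{-1} \succeq \frac{1}{\alpha}\id_d$ together with $\nabla\Phi(\vb(\pi)) = \0_d$ and $\Phi(\vb(\pi)) = 0$, then conclude $\Phi(\vx) \ge \frac{1}{2\alpha}\norm{\vx - \vb(\pi)}_2^2$ by integrating the Hessian along the segment. The paper phrases this as a double-integral comparison against the quadratic $\widetilde\Phi(\vx) = \frac{1}{2\alpha}\norm{\vx - \vb(\pi)}_2^2$, whereas you cite the standard strong-convexity growth bound, but these are the same computation.
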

\begin{proof}
    Let $\vy(\vx)$ and $\Phi(\vx) \defeq \KL{\mc T_{\vy(\mbf x)}\pi}{\pi}$ be as in Lemma~\ref{lem:kl-deriv}. Then,
    \begin{align*}
        \nabla^2 \Phi(\mbf x) = \Cov(\mc T_{\vy(\mbf x)} \nu)^{-1} \succeq \frac{1}{\alpha} \mbf I_d\,,
    \end{align*}
    where the first equality is Lemma~\ref{lem:kl-deriv}, whereas the second is by assumption. Now, define $\widetilde \Phi(\mbf x) = \frac{1}{2\alpha} \norm{\mbf x - \mbf b(\pi)}_2^2$. The lemma statement asks to show $\Phi \ge \wt \Phi$ pointwise. Observe that
    \[\nabla \Phi(\vb(\pi)) = \nabla \widetilde \Phi(\mbf b(\pi)) = \0_d,\quad \nabla^2 \Phi \succeq \nabla^2 \wt \Phi = \frac 1 \alpha \id_d.\]
    Thus, as $\Phi$ and $\wt \Phi$ agree up to first order around $\mbf b(\pi)$, the second order term in $\Phi$ dominates that of $\wt \Phi$, and $\wt \Phi$ is a quadratic, it follows that $\Phi(\mbf x) \geq \wt \Phi(\mbf x)$ everywhere, since
    \begin{align*}
        \Phi(\mbf x) - \wt \Phi(\mbf x) = \int_0^1 \int_0^t (\mbf x - \mbf b(\pi))^\top \nabla^2 (\Phi - \wt \Phi)(\mbf b(\pi) + s(\mbf x - \mbf b(\pi))) \cdot (\mbf x - \mbf b(\pi)) \, \D s \, \D t \geq 0.
    \end{align*}
\end{proof}

We next use the \emph{maximum entropy principle} to piggyback off Definition~\ref{def:ent_stable} and show that entropic stability holds not just for exponential tilts, but all absolutely continuous measures.

\begin{lemma}\label{lem:entropic-stable-equiv}
    Suppose $\pi$ is $\alpha$-entropically stable, and that $\mbf b(\mc T_{\mbf y} \pi)$ exists for every $\mbf y \in \R^d$. For every measure $\nu$ absolutely continuous with respect to $\pi$ where $\vb(\nu)$ exists, we have
    \begin{align*}
        \half\norm{\mbf b(\nu) - \mbf b(\pi)}_2^2 \leq \alpha \KL{\nu}{\pi}.
    \end{align*}
\end{lemma}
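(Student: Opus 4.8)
The plan is to reduce the general case to the exponential-tilt case (Definition~\ref{def:ent_stable}) via the maximum entropy principle. The key point is that among all measures $\nu$ absolutely continuous with respect to $\pi$ with a \emph{prescribed barycentre} $\vb(\nu) = \vx$, the one that minimizes $\KL{\nu}{\pi}$ is an exponential tilt $\calT_{\vy}\pi$ for the $\vy$ with $\vb(\calT_{\vy}\pi) = \vx$ (this is exactly the function $\vy(\vx)$ and the value $\Phi(\vx) = \KL{\calT_{\vy(\vx)}\pi}{\pi}$ appearing in Lemma~\ref{lem:kl-deriv}). Then for any absolutely continuous $\nu$ with $\vb(\nu) = \vx$,
\[
\alpha \KL{\nu}{\pi} \ge \alpha \KL{\calT_{\vy(\vx)}\pi}{\pi} = \alpha \Phi(\vx) \ge \half \norm{\vb(\calT_{\vy(\vx)}\pi) - \vb(\pi)}_2^2 = \half \norm{\vb(\nu) - \vb(\pi)}_2^2,
\]
where the middle inequality is $\alpha$-entropic stability applied to the tilt $\calT_{\vy(\vx)}\pi$, and the two outer equalities use $\vb(\calT_{\vy(\vx)}\pi) = \vx = \vb(\nu)$.

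Concretely I would carry this out in the following steps. First, fix $\nu \ll \pi$ with $\vb(\nu) = \vx \in \calK \defeq \operatorname{int}(\operatorname{conv}(\operatorname{supp}(\pi)))$ (if $\vx \notin \calK$ there is nothing to prove, or one handles it by a limiting argument / convexity; I expect $\vx \in \calK$ generically and would note the boundary case briefly). Second, invoke the maximum entropy inequality: for $\nu^\star \defeq \calT_{\vy(\vx)}\pi$, we have $\KL{\nu}{\pi} = \KL{\nu}{\nu^\star} + \KL{\nu}{\pi} - \KL{\nu}{\nu^\star}$, and a direct computation shows $\KL{\nu}{\pi} - \KL{\nu}{\nu^\star} = \E_\nu[\log(\nu^\star/\pi)] = \inprod{\vy(\vx)}{\vb(\nu)} - \chi(\vy(\vx)) = \Phi(\vx) = \KL{\nu^\star}{\pi}$, using that $\log(\nu^\star(\vz)/\pi(\vz)) = \inprod{\vy(\vx)}{\vz} - \chi(\vy(\vx))$ is affine in $\vz$ and that $\vb(\nu) = \vb(\nu^\star) = \vx$ (this last chain of identities is essentially the conjugacy computation already spelled out in the proof of Lemma~\ref{lem:kl-deriv}). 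Hence $\KL{\nu}{\pi} = \KL{\nu}{\nu^\star} + \KL{\nu^\star}{\pi} \ge \KL{\nu^\star}{\pi}$, since KL divergence is nonnegative. Third, apply Definition~\ref{def:ent_stable} to $\nu^\star = \calT_{\vy(\vx)}\pi$ and use $\vb(\nu^\star) = \vb(\nu)$ to conclude.

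The main obstacle — really the only nontrivial point — is the maximum entropy / Pythagorean identity $\KL{\nu}{\pi} = \KL{\nu}{\nu^\star} + \KL{\nu^\star}{\pi}$, which hinges on $\log(\D\nu^\star/\D\pi)$ being an affine function of the coordinate and on matching barycentres; I would present this as a one-line computation. Minor care is needed to ensure the relevant quantities are finite (e.g. $\chi(\vy(\vx)) < \infty$, which holds since $\vx \in \calK$, and $\KL{\nu}{\pi} < \infty$, else the claim is trivial) and to dispatch the case $\vb(\nu) \notin \calK$, but none of this is substantive.
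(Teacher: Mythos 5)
Your proof is correct and follows essentially the same route as the paper: both reduce the general case to exponential tilts via the maximum entropy principle and then apply Definition~\ref{def:ent_stable}. The only difference is that you justify the max-entropy step via the Pythagorean identity $\KL{\nu}{\pi} = \KL{\nu}{\nu^\star} + \KL{\nu^\star}{\pi}$ (using that $\log(\nu^\star/\pi)$ is affine and barycentres match), whereas the paper derives the tilt form of the optimizer from first-order Lagrangian conditions; these are two standard presentations of the same argument.
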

\begin{proof}
We recall the maximum entropy principle: among all absolutely continuous measures to $\pi$ with a given mean, the one closest to $\pi$ in KL divergence is an exponential tilt of $\pi$. 
    First, define 
    \[\nu_{\mbf w}^* \deq \arg\min_{\substack{\nu \in \mc P(\R^d) \\ \mbf b(\nu) = \mbf w}}\KL{\nu}{\pi},\]
    and note that by standard variational calculus, defining the Lagrangian $\mc L(\nu, \lambda) = \KL{\nu}{\pi} + \langle \bs \lambda_1, \mbf b(\nu) - \mbf w \rangle + \lambda_2 (\E_{\nu}[1]- 1)$ for Lagrange multipliers $\bs \lambda_1 \in \R^d, \lambda_2 \in \R$, we have
    \begin{align*}
        \log \frac{\nu_{\mbf w}^*}{\pi}(\mbf x) + \lambda_2 + \langle \bs \lambda_1, \mbf x \rangle = 0\,.
    \end{align*}
    This tells us that
    \begin{align}\label{eq:tilt-optimality}
        \D \nu_{\vw}^*(\mbf x) \propto \exp(-\langle \bs \lambda_1, \mbf x \rangle) \D \pi(\mbf x),
    \end{align}
    for some vector $\bs \lambda_1$ fixing the barycentre, or in other words, that $\nu^\star_{\vw}$ is a linear tilt of $\pi$.

    Then, we note that for any function $g: \R^d \to \R_+$,
    \[
        \inf_{\mbf y \in \R^d} \frac{\KL{\mc T_{\mbf y} \pi}{\pi}}{g(\mbf b(\mc T_{\mbf y} \pi))}  \overset{\text{(i)}}{=} \inf_{\mbf w \in \R^d} \inf_{\substack{\nu \in \mc P(\R^d) \\ \mbf b(\nu) = \mbf w}} \frac{\KL{\nu}{\pi}}{g(\mbf w)} =\inf_{\nu \in \mc P(\R^d)} \frac{\KL{\nu}{\pi}}{g(\mbf b(\nu))},
    \]
    where (i) uses~\eqref{eq:tilt-optimality}.\footnote{Here, we need to restrict to measures with finite barycentres.}
    Applying this to $g(\vx) \defeq \half\norm{\mbf x - \mbf b(\pi)}_2^2$ concludes the proof.
\end{proof}

We are now ready to state the main result of this section, which shows that on average over an entropically stable localization scheme, the entropy (defined below) is conserved:
\begin{align*}
\Ent_\pi[f] \defeq \E_{\pi}[f \log f] - \E_\pi[f] \log\E_\pi[f], \text{ for } \pi \in \calP(\R^d),\; f: \R^d \to \R_{+}.
\end{align*}
We will use the following identity: for $\nu \propto \pi f$,
\begin{equation}\label{eq:ent_kl}
\KL{\nu}{\pi} = \E_\nu\Brack{\log \frac{f}{\E_\pi f}} = \E_\nu[\log f] - \log \E_\pi[f] = \frac{\Ent_\pi[f]}{\E_\pi[f]}.
\end{equation}

\begin{lemma}[{\cite[Proposition 39]{ChenE22}}]\label{lem:conservation}
Let $\{\pi_t\}_{t \ge 0}$ be a localization process for $\pi \in \calP(\R^d)$.
    Fix $T > 0$, and suppose that $\pi_t$ is almost surely $\alpha_t$-entropically stable for all $t \in [0, T]$. Then, 
    \begin{align*}
        \E\Brack{\operatorname{Ent}_{\pi_T}[f]} \geq \exp\Par{-\int_0^T \alpha_t \, \D t} \operatorname{Ent}_\pi[f].
    \end{align*}
\end{lemma}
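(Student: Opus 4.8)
The plan is to track the evolution of $\E[\operatorname{Ent}_{\pi_t}[f]]$ as $t$ increases from $0$ to $T$, and show it decreases at a controlled rate governed by $\alpha_t$. To set this up, I would first recall from \eqref{eq:density_def} (equivalently Perspective~\ref{perspective:density}, which applies to the stochastic localization process; more generally one should invoke the analogous martingale dynamics for a general localization process) that $\pi_t(\vx)$ is a martingale in $t$ pointwise, with $\dd \pi_t(\vx) = \inprod{\vx - \vm_t}{\dd \vW_t}\pi_t(\vx)$. The quantity of interest is $\operatorname{Ent}_{\pi_t}[f] = \E_{\pi_t}[f \log f] - \E_{\pi_t}[f]\log\E_{\pi_t}[f]$, so I would compute its It\^o differential. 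Writing $g(t) \defeq \E_{\pi_t}[f]$, note $g(t)$ is itself a martingale (integrate the pointwise martingale against $f$), and $\E_{\pi_t}[f\log f]$ is also a martingale. The only source of drift in $\operatorname{Ent}_{\pi_t}[f]$ comes from the nonlinear term $-g(t)\log g(t)$: by It\^o's lemma applied to the concave function $s \mapsto -s\log s$, the drift of $-g(t)\log g(t)$ is $+\tfrac{1}{2g(t)}\,\dd\langle g\rangle_t$, where $\dd\langle g\rangle_t$ is the quadratic variation of $g$. Hence
\[
\dd \E[\operatorname{Ent}_{\pi_t}[f]] = \tfrac{1}{2}\,\E\Brack{\frac{1}{\E_{\pi_t}[f]}\,\frac{\dd\langle g\rangle_t}{\dd t}}\,\dd t,
\]
which is manifestly nonpositive — so the entropy decreases in expectation, and the whole point is to lower bound the rate of decrease.

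Next I would identify $\dd\langle g\rangle_t$ explicitly. From $\dd g(t) = \dd \E_{\pi_t}[f] = \int f(\vx)\inprod{\vx - \vm_t}{\dd\vW_t}\pi_t(\vx)\,\dd\vx = \inprod{\E_{\pi_t}[f(\vx)(\vx - \vm_t)]}{\dd\vW_t}$, we get $\dd\langle g\rangle_t/\dd t = \norm{\E_{\pi_t}[f(\vx)(\vx-\vm_t)]}_2^2$. Now observe that if $\nu_t \propto \pi_t f$ denotes the tilted-by-$f$ measure, then $\E_{\pi_t}[f(\vx)(\vx-\vm_t)] = \E_{\pi_t}[f]\cdot(\vb(\nu_t) - \vb(\pi_t))$, since $\vb(\nu_t) = \E_{\pi_t}[f\vx]/\E_{\pi_t}[f]$ and $\vm_t = \vb(\pi_t)$. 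Therefore
\[
\dd \E[\operatorname{Ent}_{\pi_t}[f]] = \tfrac{1}{2}\,\E\Brack{\E_{\pi_t}[f]\cdot\norm{\vb(\nu_t) - \vb(\pi_t)}_2^2}\,\dd t.
\]
This is where entropic stability enters: Lemma~\ref{lem:entropic-stable-equiv} (the upgrade of Definition~\ref{def:ent_stable} from tilts to all absolutely continuous measures, which applies to $\nu_t \ll \pi_t$) gives $\tfrac{1}{2}\norm{\vb(\nu_t) - \vb(\pi_t)}_2^2 \le \alpha_t\,\KL{\nu_t}{\pi_t}$ almost surely. Combining with the identity \eqref{eq:ent_kl}, namely $\KL{\nu_t}{\pi_t} = \operatorname{Ent}_{\pi_t}[f]/\E_{\pi_t}[f]$, we obtain
\[
\tfrac{1}{2}\,\E_{\pi_t}[f]\cdot\norm{\vb(\nu_t) - \vb(\pi_t)}_2^2 \le \alpha_t\,\operatorname{Ent}_{\pi_t}[f].
\]

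Putting it together, $\frac{\dd}{\dd t}\E[\operatorname{Ent}_{\pi_t}[f]] \ge -\alpha_t\,\E[\operatorname{Ent}_{\pi_t}[f]]$ (the expectation passes through since the bound is pointwise). Gr\"onwall's inequality then yields $\E[\operatorname{Ent}_{\pi_T}[f]] \ge \exp(-\int_0^T \alpha_t\,\dd t)\,\operatorname{Ent}_{\pi_0}[f]$, and $\pi_0 = \pi$ gives the claim. The main obstacle I anticipate is purely a matter of care with the stochastic calculus: justifying that $\operatorname{Ent}_{\pi_t}[f]$ has no martingale-part drift beyond the $-s\log s$ term (i.e., that differentiating under the expectation/integral signs is legitimate, and that the local martingale parts are true martingales so they vanish in expectation) — this requires the integrability assumptions implicit in ``$\pi_t$ is a localization process'' and $f$ being a nice test function. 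A secondary subtlety is ensuring Lemma~\ref{lem:entropic-stable-equiv} is applicable to $\nu_t \propto \pi_t f$, which needs $\vb(\nu_t)$ to exist and $\vb(\calT_\vy \pi_t)$ to exist for all $\vy$; both hold in the strongly log-concave setting of interest, but should be flagged. Everything else is routine once the It\^o computation is laid out.
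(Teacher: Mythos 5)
Your proof takes exactly the paper's route: the drift of $\operatorname{Ent}_{\pi_t}[f]$ comes entirely from It\^o applied to the nonlinear term $s \mapsto s\log s$ evaluated at the martingale $M_t = \E_{\pi_t}[f]$, the quadratic variation is identified as $M_t^2 \norm{\vb(\nu_t) - \vb(\pi_t)}_2^2$, and entropic stability (upgraded via Lemma~\ref{lem:entropic-stable-equiv}) plus \eqref{eq:ent_kl} closes the Gr\"onwall argument. The only flaw is a sign slip in the two intermediate displays: since $h(s) = -s\log s$ has $h''(s) = -1/s$, the drift of $-g\log g$ is $-\tfrac{1}{2g}\,\dd\langle g\rangle_t$, not $+\tfrac{1}{2g}\,\dd\langle g\rangle_t$; as written, the display for $\dd\,\E[\operatorname{Ent}_{\pi_t}[f]]$ is nonnegative while your prose says ``manifestly nonpositive.'' Your final inequality $\tfrac{\dd}{\dd t}\E[\operatorname{Ent}_{\pi_t}[f]] \geq -\alpha_t\,\E[\operatorname{Ent}_{\pi_t}[f]]$ is correct, so the slip does not propagate, but the displayed formulas should carry the minus sign.
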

\begin{proof}
Fix a measurable test function $f: \R^d \to \mathbb R_+$. Define for $t \geq 0$
\begin{align*}
    \nu_t(\mbf x) \propto f(\mbf x) \pi_t(\mbf x).
\end{align*}
If we consider the process $M_t \coloneqq \E_{\pi_t}[f]$, then~\eqref{eq:density_def} gives
\begin{align*}
    \D M_t = \E_{\pi_t}[f \langle\mbf x - \bs \mu_t, \D \bs W_t\rangle] = M_t \langle\mbf b(\nu_t) - \mbf b(\pi_t), \D \vW_t \rangle,
\end{align*}
and so $M_t$ is a martingale.
It\^o's lemma then tells us that
\begin{align*}
    \D (M_t \log M_t) = \frac{1}{2} M_t \norm{\mbf b(\nu_t)  - \mbf b(\pi_t)}_2^2 \, \D t + \text{martingale}.
\end{align*}
Thus, we can compute the entropy differential,
\begin{align*}
    \D \operatorname{Ent}_{\pi_t}[f] &= \D \E_{\pi_t}[f \log f] - \D (M_t \log M_t) \\
    &= -\frac{1}{2} M_t \norm{\mbf b(\nu_t)  - \mbf b(\pi_t)}_2^2 \, \D t + \text{martingale},
\end{align*}
where $\D \E_{\pi_t}[f \log f]$ is a martingale because $\pi_t$ is a martingale pointwise. Lemma~\ref{lem:entropic-stable-equiv} and \eqref{eq:ent_kl} imply
\begin{align*}
    \D \operatorname{Ent}_{\pi_t}[f] &\geq -\alpha_t M_t \KL{\nu_t}{\pi_t}\, \D t + \text{martingale} = -\alpha_t \Ent_{\pi_t}[f] + \text{martingale.}
\end{align*}
Taking expectations, and applying Gr\"onwall's inequality, we conclude that
\[
    \E\Brack{\operatorname{Ent}_{\pi_T}[f] } \geq \exp\Par{-\int_0^T \alpha_t \, \D t}\operatorname{Ent}_{\pi}[f]. \qedhere
\]
\end{proof}

\subsubsection{Log-Sobolev inequality from entropy conservation}\label{app:lsi}

We now show how to use the entropy conservation bound from Lemma~\ref{lem:conservation} to establish rapid mixing of \eqref{eq:rgd}. Recall that a transition kernel $\msf P$ with stationary measure $\pi$  satisfies a (modified) \emph{log-Sobolev inequality} (LSI) with constant $C_{\operatorname{LS}}$ if the following inequality holds:
\begin{align*}
    C_{\operatorname{LS}}(\msf P) \coloneqq 1 - \sup_{f: \Omega \to \mathbb R_+} \frac{\operatorname{Ent}_\pi[\msf P f]}{\operatorname{Ent}_\pi f}.
\end{align*}
Lower bounding the LSI constant is useful because it implies rapid mixing.
Specifically, a Markov chain with transition kernel $\sfP$, initialized at a distribution $\mu$ with $\frac \mu \pi \le \beta$ pointwise, mixes to within $\eps$ in total variation of $\pi$ in $O(\frac 1 {C_{\text{LS}}(\sfP)}\log(\frac{\log \beta}{\eps}))$ steps; see e.g., Lemma 2.4 in \cite{BlancaCPSV21}. This is because $\KL{\cdot}{\pi}$ improves by a $1 - C_{\text{LS}}(\sfP)$ factor in each step.

We next reproduce a key observation of \cite{ChenE22}: entropy conservation of a localization process, as in Lemma~\ref{lem:conservation}, lower bounds the LSI constant of the associated localization dynamics.

\begin{lemma}[{\cite[Proposition 19]{ChenE22}}]\label{lem:ent-difference}
    Assume that $\{\pi_t\}_{t \geq 0}$ is a localization process associated to $\pi$, and that $\msf P_T^\pi$ is the transition kernel \eqref{eq:kernel_loc} associated to its localization dynamics. Then,
    \begin{align*}
        C_{\operatorname{LS}}(\msf P_T^\pi) \geq \inf_{f: \Omega \to \mathbb R_+} \frac{\E[\operatorname{Ent}_{\pi_T}[f]]}{\operatorname{Ent}_\pi[f]}\,.
    \end{align*}
\end{lemma}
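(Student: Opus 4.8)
The goal is to lower-bound the log-Sobolev constant $C_{\operatorname{LS}}(\msf P_T^\pi)$ by the worst-case ratio $\E[\operatorname{Ent}_{\pi_T}[f]]/\operatorname{Ent}_\pi[f]$. Recalling the definition $C_{\operatorname{LS}}(\msf P_T^\pi) = 1 - \sup_f \operatorname{Ent}_\pi[\msf P_T^\pi f]/\operatorname{Ent}_\pi[f]$, it is equivalent to show that for every $f: \Omega \to \R_+$,
\[\operatorname{Ent}_\pi[f] - \operatorname{Ent}_\pi[\msf P_T^\pi f] \ge \E\Brack{\operatorname{Ent}_{\pi_T}[f]}.\]
So the plan is to establish this single pointwise (in $f$) inequality, which then yields the claim after dividing by $\operatorname{Ent}_\pi[f]$ and taking infima/suprema.

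\textbf{Key steps.} First I would expand the left-hand side using the definition of entropy and the fact that $\pi$ is stationary for $\msf P_T^\pi$ (Lemma~\ref{lem:pi_stationary}), which gives $\E_\pi[\msf P_T^\pi f] = \E_\pi[f] =: M$, so the $-M\log M$ terms cancel and we are left with $\E_\pi[f\log f] - \E_\pi[(\msf P_T^\pi f)\log(\msf P_T^\pi f)]$. Second, I would unpack $\msf P_T^\pi f(\omega) = \E[\pi_T(\omega)\pi_T(f)/\pi(\omega)]$ where $\pi_T(f) := \int f \,\dd\pi_T = M_T$ in the notation of Lemma~\ref{lem:conservation}; the key structural observation (following \cite{ChenE22}) is that $\msf P_T^\pi$ is the adjoint/one-step map of a two-stage sampling procedure, and more usefully that the martingale property (Definition~\ref{def:loc-proc}(2)) gives $\E[\pi_T(\omega)] = \pi(\omega)$, so $\msf P_T^\pi f$ is a genuine $\pi$-conditional expectation of $f$ against the coupling $\E[\pi_T \otimes \pi_T/\pi]$. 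Third, I would invoke the tower/convexity property of entropy: writing $\operatorname{Ent}_\pi[f]$ and peeling off the contribution of the localization process, one shows $\operatorname{Ent}_\pi[f] = \E[\operatorname{Ent}_{\pi_T}[f]] + (\text{an entropy term in the ``averaged'' direction})$, and that this latter term is exactly $\operatorname{Ent}_\pi[\msf P_T^\pi f]$ — this is the decomposition of entropy along the two-step kernel, using that $\pi_T$ is a martingale so that $\E_{\pi}[\cdot] = \E[\E_{\pi_T}[\cdot]]$. Concretely, the cleanest route is: $\E_\pi[f \log f] = \E[\E_{\pi_T}[f\log f]]$ (since $\pi_T$ is a pointwise martingale with $\E[\pi_T] = \pi$), then $\E_{\pi_T}[f\log f] = \operatorname{Ent}_{\pi_T}[f] + M_T \log M_T$ with $M_T = \pi_T(f)$, so summing, $\E_\pi[f\log f] = \E[\operatorname{Ent}_{\pi_T}[f]] + \E[M_T \log M_T]$. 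It then remains to show $\E[M_T \log M_T] \ge \E_\pi[(\msf P_T^\pi f)\log(\msf P_T^\pi f)]$, which is Jensen's inequality applied to the convex function $t \mapsto t\log t$ after recognizing that $\msf P_T^\pi f(\omega) = \E[M_T \cdot \pi_T(\omega)/\pi(\omega)]$ is a particular averaging of $M_T$ against the probability kernel $\dd\omega \mapsto \pi_T(\omega)\pi(\dd\omega)^{-1}$-weighted version of the law of $\pi_T$ — more precisely, $\E_\pi[(\msf P_T^\pi f)\log(\msf P_T^\pi f)] = \E_\pi\big[ \Phi\big(\E[M_T \pi_T/\pi \mid \omega]\big)\big] \le \E_\pi\big[\E[\Phi(M_T) \pi_T/\pi \mid \omega]\big] = \E[\Phi(M_T)\int \pi_T(\omega)\dd\omega] = \E[M_T\log M_T]$ where $\Phi(t) = t\log t$, using the martingale property to integrate out $\omega$ in the last step.

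\textbf{Main obstacle.} The conceptual crux — and the step most prone to hidden regularity/measurability issues — is the correct bookkeeping in the Jensen step: identifying $\msf P_T^\pi f(\omega)$ as a conditional expectation of the random variable $M_T$ under the appropriate (random-measure-valued) conditioning, so that $t \mapsto t \log t$ convexity applies cleanly and the residual $\omega$-integral collapses via $\int \pi_T(\omega)\,\dd\omega = 1$ and $\E[\pi_T(\omega)] = \pi(\omega)$. Once that decomposition $\operatorname{Ent}_\pi[f] = \E[\operatorname{Ent}_{\pi_T}[f]] + \E[M_T\log M_T]$ and the inequality $\E[M_T\log M_T] \ge \operatorname{Ent}_\pi[\msf P_T^\pi f] + M\log M - $ wait, more carefully $\E[M_T \log M_T] \ge \E_\pi[\msf P_T^\pi f \log \msf P_T^\pi f]$, are in hand, rearranging and dividing by $\operatorname{Ent}_\pi[f]$ finishes the proof. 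I expect everything after the Jensen identification to be routine; the nontrivial part is setting up the probabilistic framework (the joint law of $\omega \sim \pi$ and the random measure $\pi_T$) so that these manipulations are rigorous.
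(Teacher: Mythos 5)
Your proposal is correct and follows essentially the same route as the paper: the key step in both is Jensen's inequality for $\Phi(t)=t\log t$ applied to the random variable $\E_{\pi_T}[f]$ against the probability weight $\pi_T(\omega)/\pi(\omega)$ (whose expectation is $1$ by the martingale property), followed by integration over $\omega\sim\pi$, Fubini, and the cancellation $\E_\pi[\sfP_T^\pi f]=\E_\pi[f]$ from stationarity. The only difference is cosmetic: you first decompose $\operatorname{Ent}_\pi[f]=\E[\operatorname{Ent}_{\pi_T}[f]]+\E[M_T\log M_T]$ and then apply Jensen, whereas the paper applies Jensen first and rearranges at the end.
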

\begin{proof}
Throughout this proof, $\E$ with no subscript implies that the expectation is over the randomness used in defining $\pi_T$ (i.e., the localization process), and we let $\sfP \defeq \sfP_T^\pi$ for short.

    Let $\vx \in \R^d$ be arbitrary. By Jensen's inequality applied to the convex function $c \to c\log c$, with random variable $\E_{\pi_T}[f]$ and measure $\E[\frac{\pi_T(\vx)}{\pi(\vx)} \cdot]$ (as $\E[\frac{\pi_T(\vx)}{\pi(\vx)}] = 1$), we have that
    \begin{align*}
        \E\Bigl[\frac{\pi_T(\vx)}{\pi(\vx)} \E_{\pi_T}[f] \Bigr] \log \Par{\E\Bigl[\frac{\pi_T(\vx)}{\pi(\vx)} \E_{\pi_T}[f] \Bigr]} \leq \E\Bigl[\frac{\pi_T(\vx)}{\pi(\vx)} \Bigl(\E_{\pi_T}[f] \log \E_{\pi_T}[f]\Bigr)\Bigr].
    \end{align*}
    Integrating over $\vx \sim \pi$, this implies via Fubini's theorem that
    \begin{align*}
        \E_\pi[\msf P f \log \msf P f] &= \E_\pi \biggl[\E\Bigl[\frac{\pi_T}{\pi} \E_{\pi_T}[f] \Bigr]\log \E\Bigl[\frac{\pi_T}{\pi} \E_{\pi_T}[f] \Bigr]\biggr] \\
        &\leq \E_\pi\Bigl[\E\Bigl[\frac{\pi_T}{\pi} \Bigl(\E_{\pi_T}[f] \log \E_{\pi_T}[f]\Bigr)\Bigr]\Bigr] \\
        &= \E \biggl[\E_\pi\Bigl[\frac{\pi_T}{\pi} \E_{\pi_T}[f] \log \E_{\pi_T}[f]\Bigr]\biggr] =\E\Bigl[\E_{\pi_T}[f] \log \E_{\pi_T}[f]\Bigr].  
    \end{align*}
    Finally,
    \begin{align*}
        \frac{\E[\operatorname{Ent}_{\pi_T}[f]]}{\operatorname{Ent}_\pi[f]} &= \frac{\E[\E_{\pi_T}[f \log f]] - \E[\E_{\pi_T}f \log \E_{\pi_T}f]}{\operatorname{Ent}_\pi[f]} \\
        &\leq \frac{\E[\E_{\pi_T}[f \log f]] - \E_\pi[\msf P f \log \msf P f]}{\operatorname{Ent}_\pi[f]} \\
        &=\frac{\Par{\E_\pi[f \log f] - \E_\pi[f] \log \E_\pi[f]} - \Par{\E_\pi[\msf P f \log \msf P f] - \E_\pi[\sfP f] \log \E_\pi[\sfP f]}}{\operatorname{Ent}_\pi[f]} \\
        &= 1-\frac{\operatorname{Ent}_\pi[\msf Pf]}{\operatorname{Ent}_\pi[f]},
    \end{align*}
    where we used that $\pi$ is stationary for $\sfP$, as well as $\E \pi_T = \pi$, in the third line.
    This concludes the proof upon infimizing over $f$ on both sides.
\end{proof}

Our final result applies the development thus far to the localization dynamics \eqref{eq:rgd}.

\begin{theorem}[{\cite[Theorem 58]{ChenE22}}]\label{thm:ce-bound}
    If $\pi$ is $\alpha$-strongly log-concave (i.e., $-\nabla^2 \log \pi \succeq \alpha \id_d$ pointwise on $\R^d$), then the Markov chain with transition kernel $\sfP$ defined by \eqref{eq:rgd} satisfies 
    $C_{\operatorname{LS}}(\msf P) \geq \frac{\alpha}{\alpha + \eta^{-1}}$.
\end{theorem}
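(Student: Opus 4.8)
The plan is to chain together the three main results of this appendix. By Theorem~\ref{thm:rgd_is_localization}, the chain \eqref{eq:rgd} with step size $\eta$ coincides with the localization dynamics $\sfP_T^\pi$ of the stochastic localization process \eqref{eq:tilt_def}, \eqref{eq:density_def} at time $T = \eta^{-1}$; so it suffices to lower bound $C_{\operatorname{LS}}(\sfP_T^\pi)$. Lemma~\ref{lem:ent-difference} reduces this to lower bounding $\inf_{f} \E[\Ent_{\pi_T}[f]]/\Ent_\pi[f]$, and Lemma~\ref{lem:conservation} in turn reduces that to exhibiting functions $\{\alpha_t\}_{t \in [0, T]}$ for which $\pi_t$ is almost surely $\alpha_t$-entropically stable for every $t \in [0, T]$: granting this, we obtain $C_{\operatorname{LS}}(\sfP_T^\pi) \ge \exp(-\int_0^T \alpha_t\,\dd t)$.

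The crux is therefore to control the entropic stability of $\pi_t$ under the hypothesis $-\nabla^2 \log \pi \succeq \alpha \id_d$. By Lemma~\ref{lem:tilt-stability}, it suffices to bound $\Cov(\calT_{\vy}\pi_t) \preceq \alpha_t \id_d$ uniformly over $\vy \in \R^d$. Recall from \eqref{eq:tilt_def} that $\pi_t(\vx) \propto \exp(\inprod{\vc_t}{\vx} - \tfrac t2\norm{\vx}_2^2)\pi(\vx)$, so $\calT_{\vy}\pi_t(\vx) \propto \exp(\inprod{\vy + \vc_t}{\vx} - \tfrac t2\norm{\vx}_2^2)\pi(\vx)$ is a single linear exponential tilt of the Gaussian-regularized measure $\propto \exp(-\tfrac t2 \norm{\vx}_2^2)\pi(\vx)$. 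Since a linear tilt leaves the Hessian of the log-density unchanged, and $-\nabla^2\bigl(\tfrac t2\norm{\vx}_2^2 - \log\pi(\vx)\bigr) = t\id_d - \nabla^2\log\pi(\vx) \succeq (\alpha + t)\id_d$, the measure $\calT_{\vy}\pi_t$ is $(\alpha+t)$-strongly log-concave for every realization of $\vc_t$ and every $\vy$. The Brascamp--Lieb inequality (equivalently, the Bakry--\'Emery Poincar\'e inequality applied to linear test functions, as in the opening Remark) then gives $\Cov(\calT_{\vy}\pi_t) \preceq \tfrac{1}{\alpha + t}\id_d$ deterministically. Hence $\alpha_t \defeq \tfrac{1}{\alpha + t}$ is a valid choice, and the required entropic stability holds almost surely precisely because this bound is independent of the driving noise.

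It remains to evaluate the resulting constant: $\int_0^T \alpha_t\,\dd t = \int_0^T \tfrac{\dd t}{\alpha + t} = \log\tfrac{\alpha + T}{\alpha}$, so with $T = \eta^{-1}$ we get $\exp(-\int_0^T \alpha_t\,\dd t) = \tfrac{\alpha}{\alpha + T} = \tfrac{\alpha}{\alpha + \eta^{-1}}$. Combining with the chain of inequalities from Lemma~\ref{lem:ent-difference} and Lemma~\ref{lem:conservation} yields $C_{\operatorname{LS}}(\sfP) \ge \tfrac{\alpha}{\alpha + \eta^{-1}}$, as claimed.

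The only genuinely substantive step is the uniform covariance bound for the tilted, Gaussian-regularized measure; everything else is bookkeeping through the preceding lemmas. I do not anticipate real obstacles, since strong log-concavity is preserved under Gaussian regularization (which only improves the constant, from $\alpha$ to $\alpha + t$) and under linear tilting (which preserves it), and the covariance-from-log-concavity estimate is classical. The one point to handle with care is noting that this estimate is deterministic in $\vc_t$, which is exactly what licenses the ``almost surely $\alpha_t$-entropically stable'' hypothesis of Lemma~\ref{lem:conservation}.
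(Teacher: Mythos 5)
Your proposal is correct and follows essentially the same route as the paper's proof: strong log-concavity is preserved under Gaussian regularization and linear tilting (giving $(\alpha+t)$-strong log-concavity of tilts of $\pi_t$), Brascamp--Lieb yields the covariance bound needed for Lemma~\ref{lem:tilt-stability}, and the integral $\int_0^T (\alpha+t)^{-1}\,\dd t$ is evaluated exactly as in the paper. You make the invocation of Theorem~\ref{thm:rgd_is_localization} explicit, which the paper leaves implicit, but the substance is identical.
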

\begin{proof}
Throughout this proof, let $T \defeq \frac 1 \eta$.
    From the definition of the localization process \eqref{eq:tilt_def}, the localized measure $\pi_t$ is always of the form
    \begin{align*}
        \pi_t \propto \exp\Par{\inprod{\vc_t}{\cdot} - \frac t 2 \norm{\cdot}_2^2}\pi,
    \end{align*}
    and therefore $
        \nabla^2 \log \pi_t \preceq -(\alpha + t) \mbf I_d$ pointwise,
    as linear tilts do not affect the second derivative matrix.
    The famous Brascamp-Lieb inequality \cite{BrascampL76} then states that for all $\vy \in \R^d$,
    \begin{align*}
        \normop{\Cov(\mc T_{\mbf \vy} \pi_t)} \leq \frac{1}{\alpha + t}.
    \end{align*}
    Lemma~\ref{lem:tilt-stability} now implies that $\pi_t$ is $\frac{1}{\alpha + t}$-entropically stable. Then, applying Lemma~\ref{lem:conservation} gives
    \begin{align*}
        \E[\operatorname{Ent}_{\pi_T}[f]] \geq \exp\Bigl( -\int_0^T \frac{1}{\alpha + t} \, \D t\Bigr)\operatorname{Ent}_\pi[f] = \frac{\alpha}{\alpha + T} \operatorname{Ent}_\pi[f],
    \end{align*}
    for all $f: \R^d \to \R_+$. Finally, Lemma~\ref{lem:ent-difference} tells us that $C_{\operatorname{LS}}(\msf P) \geq \frac{\alpha}{\alpha + T}$,
    as desired.
\end{proof}

\subsection{The perspective from renormalization}

In this appendix, we show how the Polchinski semigroup perspective from Section~\ref{sec:renormalization} can be used to derive log-Sobolev inequalities and entropic stability estimates for the localization scheme (Definition~\ref{def:loc-proc}) induced by the stochastic localization process (Perspective~\ref{perspective:tilt}).

For consistency with Section~\ref{sec:renormalization}, in this section we let $\tau\in [0, 1]$. Recall that under the time change $t \gets \frac \tau {1 - \tau}$, Theorem~\ref{thm:tilt_renormal} shows that the renormalized measure $\pi^{\vv_\tau}_\tau$ defined in \eqref{eq:fluctuation2}, \eqref{eq:polchinski_sde} is identical in distribution to the localized measure $\pi_t$ used to define the stochastic localization scheme.

We first derive functional inequalities for the renormalized measures in Perspective~\ref{perspective:renorm}. Recall that we say a measure $\pi$ satisfies a \emph{Poincar\'e inequality} with constant $\alpha$ if for all $f \in \calF(\R^d)$,
\begin{equation}\label{eq:pi_def}\Var_\pi[f] \le \frac 1 \alpha \E_\pi\Brack{\norm{\nabla f}_2^2},\end{equation}
and similarly, $\pi$ satisfies a \emph{log-Sobolev inequality}\footnote{This is related to the modified log-Sobolev inequality in \S\ref{app:lsi}.} with constant $\alpha$ if for all $f \in \calF(\R^d)$,
\begin{equation}\label{eq:lsi_def}\Ent_\pi\Brack{f} \le \frac 2 \alpha \E_\pi\Brack{\norm{\nabla \sqrt f}_2^2}.\end{equation}
We remark that a standard linearization argument shows that a log-Sobolev inequality implies a Poincar\'e inequality with the same constant~\cite[Exercise 1.7]{Chewi25}.

\begin{proposition}\label{prop:nu-functional}
    Suppose $\pi$ satisfies a Poincar\'e (resp.\ log-Sobolev) inequality with constant $\alpha$.  Then the renormalized measure $\nu_\tau$ satisfies a Poincar\'e (resp.\ log-Sobolev) inequality with constant
    \[\frac{\alpha}{\tau(\alpha+\tau-\alpha\tau)}.\]
\end{proposition}
\begin{proof}
    From Perspective \ref{perspective:renorm}, $\nu_\tau(\vx)\propto \exp(-V_\tau(\vx)-\frac{1}{2\tau}\norm{\vx}_2^2)$, with 
    \[V_\tau(\vx)=-\log \mathbb{E}_{\vz\sim \mathcal{N}(\mathbf{0}_d, (1-\tau)\id)}[\exp(-V_1(\vx+\vz)].\]
    Then explicitly, 
    \begin{align*}
        \nu_\tau(\vx)
        &\propto \int \exp\Par{-V_1(\vx+\vz)-\frac{1}{2\tau}\norm{\vx}_2^2-\frac{1}{2(1-\tau)}\norm{\vz}_2^2}\,\dd\vz\\
        &=\int \exp\Par{\frac{1}{2}\norm{\vx+\vz}_2^2-\frac{1}{2\tau}\norm{\vx}_2^2-\frac{1}{2(1-\tau)}\norm{\vz}_2^2}\pi(\vx+\vz)\,\dd\vz\\
        &=\int \exp\Par{\frac{1}{2}\norm{\vy}_2^2-\frac{1}{2\tau}\norm{\vx}_2^2-\frac{1}{2(1-\tau)}\norm{\vx-\vy}_2^2}\pi(\vy)\,\dd\vy\\
        &=\int \exp\Par{\frac{1}{1-\tau}\inprod{\vx}{\vy}-\frac{\tau}{2(1-\tau)}\norm{\vy}_2^2-\frac{1}{2\tau(1-\tau)}\norm{\vx}_2^2}\pi(\vy)\,\dd\vy\\
        &=\int \exp\Par{-\frac{\tau}{2(1-\tau)}\norm{\frac{\vx}{\tau}-\vy}^2}\pi(\vy)\,\dd\vy.
    \end{align*}
    This is almost of the form of a Gaussian convolution.  We first define
    \[\nu_\tau'(\vx)\propto \int \exp\Par{-\frac{\tau}{2(1-\tau)}\norm{\vx-\vy}^2}\pi(\vy)\,\dd\vy.\]
    Then this is a convolution of two densities that satisfy Poincar\'e (log-Sobolev) inequalities with constants $\alpha$ and $\frac \tau {1-\tau}$, respectively.  Therefore~\cite[Proposition 2.3.8]{Chewi25}, $\nu'_\tau$ satisfies a Poincar\'e (log-Sobolev) inequality with constant
    \[\frac{1}{\frac{1}{\alpha}+\frac{1-\tau}{\tau}}=\frac{\alpha\tau}{(1-\tau)\alpha+\tau}.\]
    Now $\nu'_\tau$ and $\nu_\tau$ are equivalent up to a Lipschitz transformation of $\frac 1 \tau$; thus~\cite[Proposition 2.3.3]{Chewi25}, $\nu_\tau$ satisfies a Poincar\'e (log-Sobolev) inequality with constant 
    \[\frac{\alpha}{\tau((1-\tau)\alpha+\tau)},\]
    as desired.
\end{proof}

We now discuss what we call $\Phi$-conservation of $\pi$.  Let $\Phi: \R \to \R$ be a convex function and $f: \Omega \to \R$ an arbitrary function, and let $\Psi$ be a functional such that
\begin{equation}\label{eq:psi_def}\Psi_{\nu}[f]:=\mathbb{E}_{\nu}[\Phi(f)]-\Phi(\mathbb{E}_{\nu}[f]).\end{equation}
For example, when $\Phi(x) = x\log x$, then $\Psi = \operatorname{Ent}$, and when $\Phi(x) = x^2$, then $\Psi = \operatorname{Var}$.
From this definition and the fact that $\pi = \nu_1$ in Perspective~\ref{perspective:renorm}, we derive the decomposition
\begin{equation}\label{eq:psi-decomp}
\begin{aligned}
    \Psi_{\pi}[f] &= \E_{\nu_1}\Brack{\Phi(f)} - \Phi\Par{\E_{\nu_1}[f]}
    \\ 
    &= \E_{\nu_\tau}\Brack{\sfP_{\tau, 1} \Phi(f)} -\E_{\nu_\tau}\Brack{\Phi\Par{\sfP_{\tau, 1} f}} + \E_{\nu_\tau}\Brack{\Phi\Par{\sfP_{\tau, 1} f}} - \Phi\Par{\E_{\nu_\tau}\Brack{\sfP_{\tau, 1} f}} \\
    &= \E_{\nu_\tau}\Brack{\sfP_{\tau, 1} \Phi(f) - \Phi\Par{\sfP_{\tau, 1} f}} + \Psi_{\nu_\tau}\Brack{\sfP_{\tau, 1} f}
    \\ &=\mathbb{E}_{\nu_\tau}\left[\Psi_{\pi_\tau^{\vv}}[f]\right] + \Psi_{\nu_\tau}[\sfP_{\tau, 1}f] 
    \end{aligned}
\end{equation}
for any $\tau \in [0, 1]$, where we used Lemma~\ref{lem:semigroup} in the second line, and \eqref{eq:renorm_expect} twice in the last.
\begin{proposition}\label{prop:func_ineq_renorm}
Let $\Phi$ be a convex function, and define $\Psi$ as in \eqref{eq:psi_def}.
Following notation in Perspective~\ref{perspective:renorm}, 
assume that for all $\tau \in [0, 1]$, $\nu_\tau$ satisfies the following functional $\Psi$-inequality:
    \begin{equation}\label{eq:psi_ineq}\Psi_{\nu_\tau}[f]\leq \frac{1}{2\gamma_{\tau}}\mathbb{E}_{\nu_\tau}\left[ \Phi''(f)\norm{\nabla f}_2^2\right],\end{equation}
    for some $\{\gamma_{\tau}\}_{\tau \in [0, 1]}$.
    Then $\pi$ satisfies $\Psi$-conservation, meaning that
    \[\Par{1-\exp\Par{-\int_\tau^1 \gamma_{ \sigma}\, \D\sigma}}\Psi_\pi[f]\leq \mathbb{E}_{\nu_\tau}\left[\Psi_{\pi_\tau^{\vv}}[f]]\right.\]
\end{proposition}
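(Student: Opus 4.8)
The plan is to mirror the argument used for \eqref{eq:lsi_renorm}, but now tracking the general quantity $\Psi$ along the Polchinski semigroup flow, and then combine it with the exact decomposition \eqref{eq:psi-decomp}. The starting point is to define $g(\tau) \defeq \Psi_{\nu_\tau}[\sfP_{\tau, 1} f]$, which by \eqref{eq:psi-decomp} satisfies $\Psi_\pi[f] = \E_{\nu_\tau}[\Psi_{\pi_\tau^{\vv}}[f]] + g(\tau)$; note $g(1) = \Psi_\pi[f]$ since $\nu_1 = \pi$ and $\sfP_{1,1}$ is the identity, while $g(0) = 0$ by weak continuity ($\nu_0 = \delta_{\0_d}$, so $\Psi_{\nu_0}$ vanishes on any function). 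So it suffices to show $g$ grows at least at rate $\gamma_\tau g$, i.e.\ $g'(\tau) \ge \gamma_\tau g(\tau)$; Gr\"onwall's inequality applied backward from $\tau=1$ then yields $g(\tau) \le \exp(-\int_\tau^1 \gamma_\sigma\,\D\sigma)\,g(1) = \exp(-\int_\tau^1\gamma_\sigma\,\D\sigma)\Psi_\pi[f]$, and substituting back into the decomposition gives exactly the claim.

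First I would compute $g'(\tau) = \partial_\tau\Par{\E_{\nu_\tau}[\Phi(\sfP_{\tau, 1}f)] - \Phi(\E_{\nu_\tau}[\sfP_{\tau, 1}f])}$. For the second term, $\E_{\nu_\tau}[\sfP_{\tau, 1}f] = \E_\pi[f]$ is constant in $\tau$ (again by $\sfP^*_{\tau,1}\nu_\tau = \nu_1 = \pi$ from Lemma~\ref{lem:semigroup}), so that term contributes nothing. For the first term, I would use the same calculation as in the proof of \eqref{eq:lsi_renorm}: applying $\partial_\tau\E_{\nu_\tau}[\cdot] = \E_{\nu_\tau}[\sfL_\tau(\cdot)]$ from Lemma~\ref{lem:polchinski_gen} together with the chain rule and $\partial_\tau \sfP_{\tau, 1}f = \sfP_{\tau,1}\sfL_\tau f$ — wait, that is not quite the sign that appears; the relevant identity here is $\partial_\tau\sfP_{\sig,\tau}f = \sfP_{\sig,\tau}\sfL_\tau f$ and $\partial_\sig\sfP_{\sig,\tau}f = -\sfL_\sig\sfP_{\sig,\tau}f$, so with the semigroup variable being the first index we have $\partial_\tau\sfP_{\tau,1}f = -\sfL_\tau\sfP_{\tau,1}f$. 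Combining these as in the earlier proof, the $\sfL_\tau$ terms cancel and one is left with $g'(\tau) = \tfrac12\E_{\nu_\tau}[\Phi''(\sfP_{\tau,1}f)\,\norm{\nabla \sfP_{\tau,1}f}_2^2]$. (This is the direct generalization of the computation that produced $2\E_{\nu_\sig}[\norm{\nabla\sqrt{\sfP_{\sig,\tau}f}}_2^2]$ in the special case $\Phi(x) = x\log x$, using $\tfrac12\Phi''(y)\norm{\nabla y}^2 = 2\norm{\nabla\sqrt y}^2$ there.)

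Then I would apply the hypothesis \eqref{eq:psi_ineq} at time $\tau$, with the test function $\sfP_{\tau, 1}f$ in place of $f$: this gives $\Psi_{\nu_\tau}[\sfP_{\tau,1}f] \le \tfrac{1}{2\gamma_\tau}\E_{\nu_\tau}[\Phi''(\sfP_{\tau,1}f)\norm{\nabla\sfP_{\tau,1}f}_2^2]$, i.e.\ $g(\tau) \le \tfrac{1}{\gamma_\tau}g'(\tau)$, which is the differential inequality we wanted. The remaining bookkeeping is just the Gr\"onwall step and plugging back into \eqref{eq:psi-decomp}; I would also note that \eqref{eq:stability_renorm} of Proposition~\ref{prop:renorm-bakry-emery} is the special case $\Phi(x) = x\log x$, whose hypothesis \eqref{eq:psi_ineq} is precisely \eqref{eq:lsi_renorm} already established. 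The main obstacle — really the only subtle point — is getting the signs and index conventions in the Kolmogorov equations right so that the $\sfL_\tau$-terms genuinely cancel in $g'(\tau)$, and confirming that $\E_{\nu_\tau}[\sfP_{\tau,1}f]$ is $\tau$-independent so the $\Phi(\E[\cdot])$ term drops; both follow cleanly from Lemma~\ref{lem:semigroup} and Lemma~\ref{lem:polchinski_gen}, but they must be invoked with care. I also need the mild regularity (compact support of $f$, smoothness of $\sfP_{\tau,1}f$) to justify differentiating under the expectation and the boundary evaluation $g(0)=0$, exactly as in the proof of \eqref{eq:lsi_renorm}.

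\begin{proof}[Proof of Proposition~\ref{prop:func_ineq_renorm}]
Fix $f \in \calF(\R^d)$ and define $g(\tau) \defeq \Psi_{\nu_\tau}[\sfP_{\tau, 1} f]$ for $\tau \in [0, 1]$. By the decomposition \eqref{eq:psi-decomp}, for every $\tau \in [0, 1]$,
\[\Psi_\pi[f] = \mathbb{E}_{\nu_\tau}\left[\Psi_{\pi_\tau^{\vv}}[f]\right] + g(\tau).\]
Since $\nu_1 = \pi$ and $\sfP_{1, 1}$ is the identity, $g(1) = \Psi_\pi[f]$; since $\nu_0 = \delta_{\0_d}$, we have $\Psi_{\nu_0}[\cdot] = \mathbb{E}_{\delta_{\0_d}}[\Phi(\cdot)] - \Phi(\mathbb{E}_{\delta_{\0_d}}[\cdot]) = 0$, so $g(0) = 0$.

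We compute $g'(\tau)$. First, $\mathbb{E}_{\nu_\tau}[\sfP_{\tau, 1} f] = \mathbb{E}_{\nu_1}[f] = \mathbb{E}_\pi[f]$ by Lemma~\ref{lem:semigroup} (as $\sfP^*_{\tau, 1}\nu_\tau = \nu_1$), which is independent of $\tau$, so the term $\Phi(\mathbb{E}_{\nu_\tau}[\sfP_{\tau, 1}f])$ does not contribute to the derivative. For the remaining term, using the last identity in \eqref{eq:ltau_polchinski}, the chain rule, $\partial_\tau \sfP_{\tau, 1} f = -\sfL_\tau \sfP_{\tau, 1} f$ from \eqref{eq:ltau_polchinski}, and the carr\'e du champ identity \eqref{eq:cdc} applied to the Polchinski generator (for which $\sfGam_\tau(h, h) = \half\norm{\nabla h}_2^2$, consistent with \eqref{eq:polchinski_gen_def}),
\begin{align*}
g'(\tau) &= \partial_\tau \mathbb{E}_{\nu_\tau}\Brack{\Phi(\sfP_{\tau, 1} f)} = \mathbb{E}_{\nu_\tau}\Brack{\sfL_\tau\Par{\Phi(\sfP_{\tau, 1}f)} + \Phi'(\sfP_{\tau, 1}f)\,\partial_\tau \sfP_{\tau, 1} f} \\
&= \mathbb{E}_{\nu_\tau}\Brack{\Phi'(\sfP_{\tau, 1}f)\sfL_\tau \sfP_{\tau, 1}f + \half\Phi''(\sfP_{\tau, 1}f)\norm{\nabla \sfP_{\tau, 1}f}_2^2 - \Phi'(\sfP_{\tau, 1}f)\sfL_\tau \sfP_{\tau, 1}f} \\
&= \half\mathbb{E}_{\nu_\tau}\Brack{\Phi''(\sfP_{\tau, 1}f)\norm{\nabla \sfP_{\tau, 1}f}_2^2}.
\end{align*}
Applying the hypothesis \eqref{eq:psi_ineq} with the test function $\sfP_{\tau, 1}f$ in place of $f$ gives
\[g(\tau) = \Psi_{\nu_\tau}[\sfP_{\tau, 1}f] \le \frac{1}{2\gamma_\tau}\mathbb{E}_{\nu_\tau}\Brack{\Phi''(\sfP_{\tau, 1}f)\norm{\nabla \sfP_{\tau, 1}f}_2^2} = \frac{1}{\gamma_\tau}g'(\tau).\]
Hence $g'(\tau) \ge \gamma_\tau g(\tau)$ for all $\tau \in [0, 1]$, and $g \ge 0$ throughout. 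Gr\"onwall's inequality, integrated from $\tau$ to $1$, then yields
\[g(\tau) \le \exp\Par{-\int_\tau^1 \gamma_\sigma\,\D\sigma}g(1) = \exp\Par{-\int_\tau^1 \gamma_\sigma\,\D\sigma}\Psi_\pi[f].\]
Substituting this into the decomposition $\Psi_\pi[f] = \mathbb{E}_{\nu_\tau}[\Psi_{\pi_\tau^{\vv}}[f]] + g(\tau)$ gives
\[\Par{1 - \exp\Par{-\int_\tau^1 \gamma_\sigma\,\D\sigma}}\Psi_\pi[f] \le \mathbb{E}_{\nu_\tau}\Brack{\Psi_{\pi_\tau^{\vv}}[f]},\]
as claimed.
\end{proof}
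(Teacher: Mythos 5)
Your proof is correct and follows essentially the same route as the paper's: defining $E(\tau)=\Psi_{\nu_\tau}[\sfP_{\tau,1}f]$, using the constancy of $\mathbb{E}_{\nu_\tau}[\sfP_{\tau,1}f]=\mathbb{E}_\pi[f]$ so that only the $\mathbb{E}_{\nu_\tau}[\Phi(\sfP_{\tau,1}f)]$ term contributes to $E'$, invoking the diffusion chain rule together with the backward Kolmogorov equation so the $\sfL_\tau$ terms cancel, applying \eqref{eq:psi_ineq}, and running Gr\"onwall from $\tau$ to $1$ before substituting into \eqref{eq:psi-decomp}. The only inessential addition is the observation $g(0)=0$, which the argument never uses.
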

\begin{proof}
    First, note that for all $\tau \in [0, 1]$,
    \begin{align*}
        \Psi_{\nu_\tau}[\sfP_{\tau, 1}f] &=\mathbb{E}_{\nu_\tau}[\Phi(\sfP_{\tau, 1}f)]-\Phi(\mathbb{E}_{\nu_\tau}[\sfP_{\tau, 1}f])\\
        &=\mathbb{E}_{\nu_\tau}[\Phi(\sfP_{\tau, 1}f)]-\Phi(\mathbb{E}_{\pi}[f])\\
        \implies  \partial_\tau \Psi_{\nu_\tau}[\sfP_{\tau, 1}f] &=\partial_\tau \mathbb{E}_{\nu_\tau}[\Phi(\sfP_{\tau, 1}f)].
    \end{align*}
    By first applying the chain rule, and then recalling the action of $\sfL_{\sig}$ given in Lemma \ref{lem:polchinski_gen},
    \begin{align*}
        \partial_\tau\mathbb{E}_{\nu_\tau}[\Phi(\sfP_{\tau, 1}f)]
        &=\mathbb{E}_{\nu_\tau}\left[\sfL_\tau(\Phi(\sfP_{\tau, 1}f))+\Phi'(\sfP_{\tau, 1} f)\frac{\partial}{\partial \tau}\sfP_{\tau, 1}f\right]\\
        &=\mathbb{E}_{\nu_\tau}\left[ \frac{1}{2}\Phi''(\sfP_{\tau, 1}f)\norm{\nabla \sfP_{\tau, 1}f}_2^2 + \Phi'(\sfP_{\tau, 1}f)\sfL_\tau \sfP_{\tau, 1}f+\Phi'(\sfP_{\tau, 1}f)\frac{\partial}{\partial \tau}\sfP_{\tau, 1}f \right]\\
        &=\frac{1}{2}\mathbb{E}_{\nu_\tau}\left[\Phi''(\sfP_{\tau, 1}f) \norm{\nabla \sfP_{\tau, 1}f}_2^2\right].
    \end{align*}
    This is also a consequence of the \textit{diffusion chain rule}~\cite[Definition 2.2.13]{Chewi25}, as the carr\'e du champ of $\sfP_{\sigma, \tau}$ is $\frac{1}{2}\inprod{\nabla}{\nabla }$.  Then, the $\Psi$-inequality \eqref{eq:psi_ineq} implies 
    \[\partial_\tau \Psi_{\nu_\tau}[\sfP_{\tau, 1}f] =\frac{1}{2}\mathbb{E}_{\nu_\tau}\left[\Phi''(\sfP_{\tau, 1}f)\norm{\nabla \sfP_{\tau, 1}f}_2^2 \right]\geq \gamma_{ \tau}\Psi_{\nu_\tau}[\sfP_{\tau, 1}f] .\]
    Defining $E(\tau):=\Psi_{\nu_\tau}[\sfP_{\tau, 1}f]$, we have
    \[\log E(1)-\log E(\tau)=\int_{\tau}^1 \frac{E'(\sigma)}{E(\sigma)}\D\sigma\geq \int_\tau^1 \gamma_\sigma \D \sigma.\]
    Rearranging, 
    \[\Psi_{\nu_\tau}[\sfP_{\tau, 1}f]\leq \exp\Par{\int_1^\tau \gamma_{ \sigma}\D\sigma}\Psi_{\nu_1}[f].\]
    Now using $\nu_1=\pi$ and \eqref{eq:psi-decomp} we obtain the conclusion.
\end{proof}

We now derive the consequences of Propositions~\ref{prop:nu-functional} and~\ref{prop:func_ineq_renorm} for $\pi$ satisfying a suitable functional inequality.

\begin{corollary}\label{cor:renorm-constants}
    Suppose $\pi$ satisfies a Poincar\'e inequality with constant $\alpha$.  Then for all $\tau \in [0, 1]$, we have the variance conservation bound
    \[
        \frac{\alpha}{\alpha + \frac \tau {1 - \tau}}\Var_\pi[f]\leq \mathbb{E}_{\nu_\tau}\left[\Var_{\pi_\tau^{\vv}}[f]\right].
    \]
    Similarly, suppose $\pi$ satisfies a log-Sobolev inequality with constant $\alpha$.  Then for all $\tau \in [0, 1]$, we have the entropy conservation bound
\[\frac{\alpha}{\alpha + \frac \tau {1 - \tau}}\Ent_\pi[f]\leq \mathbb{E}_{\nu_\tau}\left[\Ent_{\pi_\tau^{\vv}}[f]\right].\]
\end{corollary}
\begin{proof}
We prove the statement for variance conservation; the proof for entropy conservation follows analogously.  If $\pi$ satisfies a Poincar\'e inequality with constant $\alpha$, then Proposition \ref{prop:nu-functional} establishes that $\nu_\tau$ satisfies a Poincar\'e inequality with constant 
\[\gamma_\tau:=\frac{\alpha}{\tau(\alpha+\tau-\alpha\tau)}.\]
By \eqref{eq:pi_def} $\nu_\tau$ satisfies \eqref{eq:psi_ineq} with this $\gamma_\tau$ and $\Psi=\operatorname{Var}$.  Applying Proposition \ref{prop:func_ineq_renorm} the statement is proven.
\end{proof}

Notably, under the reparameterization in Theorem~\ref{thm:tilt_renormal}, i.e., the time-change $t \gets \frac \tau {1 - \tau}$ and equivalence of localized measures, Theorem~\ref{thm:ce-bound} and Corollary~\ref{cor:renorm-constants} yield the same bound. The generality of Proposition~\ref{prop:func_ineq_renorm} further  yields a contraction $\chi^2(\cdot \| \pi)$ along the dynamics \eqref{eq:rgd}, similar to our derivation in Appendix~\ref{app:lsi} for $\KL{\cdot}{\pi}$. We defer a formal proof that variance conservation implies $\chi^2$ mixing to Proposition 19, \cite{ChenE22}.

\subsection{The persective from time reversal}\label{sec:perspective-adjointheat}

In this appendix, we show how our perspective in Section~\ref{sec:diffusion} of the stochastic localization process as a backwards heat flow also yields quantitative estimates on how quickly the dynamics \eqref{eq:rgd} converge. The computations here are replicated from~\cite{Klartag2021Spectral, ChenCSW22}. 

Formally, Section~\ref{sec:diffusion} considers a time reversal of the OU process, $\dd \vx_t = -\vx_t \dd t + \sqrt 2 \, \dd \vB_t$. Here, we instead reverse the \emph{heat flow}, which advances $\dd \vx_t = \dd \vB_t$, which behaves the same under reparameterization. If $\vx_0 \sim \pi \propto \exp(-V)$, we then have that $\vx_t = \pi \ast \gamma_t$, where $\gamma_t$ is the density of the Gaussian $\Nor(\0_d, t \id_d)$. We write this convolved measure as $\pi \msf Q_t$. 

The forwards and backwards processes we consider in this appendix are as follows, defined in a time range $t \in [0, \eta]$ for some $\eta \in (0, \infty)$:
\begin{align}
    \D \mathbf x_t &=  \D \vB_t \label{eq:forward-heat}\\
    \D \backvx_t &= \nabla \log \pi \msf Q_{\eta-t}(\backvx_t) \, \D t + \, \D \vW_t. \label{eq:backward-heat}
\end{align}
Here, the form of \eqref{eq:backward-heat} follows directly from an analogous calculation as used in Lemma~\ref{lem:timechange}.

The reason we are interested in the dynamics \eqref{eq:forward-heat}, \eqref{eq:backward-heat} is because they exactly capture the restricted Gaussian dynamics \eqref{eq:rgd}. Indeed, note that advancing the forward process \eqref{eq:forward-heat} for time $\eta$ from $\vx_0 \gets \vx$ implements the first step of \eqref{eq:rgd}. Similarly, advancing the backwards process \eqref{eq:backward-heat} for time $\eta$ from $\backvx_0 \gets \vx_\eta = \vy$ implements posterior sampling, which gives the second step of \eqref{eq:rgd} by using a similar calculation as in Theorem~\ref{thm:tilt_posterior}.

We summarize some key facts about \eqref{eq:forward-heat}, \eqref{eq:backward-heat}: the proofs are routine applications of Lemma~\ref{lem:fokker_planck} and integration by parts to compute generators as adjoints of time evolutions.

\begin{lemma}\label{lem:fokker-planck-rgo}
The generators and carr\'e du champs of \eqref{eq:forward-heat} and \eqref{eq:backward-heat} are as follows.
    \begin{enumerate}[label=(\roman*)]
        \item If $\vx_0 \sim \mu_0$ and $\mu_t \defeq \textup{Law}(\vx_t)$ where $\vx_t$ follows \eqref{eq:forward-heat}, then 
        \[\partial_t \mu_t = \half \Delta \mu_t.\]
        The generator $\msf L f = \frac{1}{2} \Delta f$ is time-invariant, and the carr\'e du champ is $\msf \Gamma(f,g) = \frac{1}{2}\langle \nabla f, \nabla g\rangle$.

        \item If $\backvx_0 \sim \backmu_0$ and $\backmu_t \defeq \textup{Law}(\backvx_t)$ where $\backvx_t$ follows \eqref{eq:backward-heat}, then 
        \[\partial_t \backmu_t = \half \Delta \backmu_t - \nabla \cdot(\backmu_t \nabla \log \pi \sfQ_{\eta - t}).\] The generator $\msf L_t^\leftarrow f = \frac{1}{2} \Delta f + \langle \nabla \log \pi \msf Q_{\eta-t}, \nabla f\rangle$ is time-dependent, and the carr\'e du champ $\msf \Gamma_t(f,g) \equiv \msf \Gamma(f,g) = \frac{1}{2}\langle \nabla f, \nabla g\rangle$ is time-invariant.    \end{enumerate}
\end{lemma}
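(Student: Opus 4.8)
The plan is to prove Lemma~\ref{lem:fokker-planck-rgo} by two routine applications of the Fokker-Planck equation (Lemma~\ref{lem:fokker_planck}) followed by integration-by-parts identifications of the generator as the adjoint of the time-evolution operator, and an It\^o--Taylor expansion for the carr\'e du champ. For part (i), the SDE \eqref{eq:forward-heat} has drift $\vm_t \equiv \0_d$ and diffusion coefficient $\msig_t \equiv \id_d$, so Lemma~\ref{lem:fokker_planck} gives $\partial_t \mu_t(\vx) = \half\sum_{i}\partial_{\vx_i}^2 \mu_t(\vx) = \half\Delta\mu_t(\vx)$ directly. To read off the generator, I would use that for a test function $f \in \calF(\R^d)$, $\partial_t \E_{\mu_t}[f] = \E_{\mu_t}[\msf L f]$ must hold; integrating $\partial_t\mu_t = \half\Delta\mu_t$ against $f$ and integrating by parts twice (the boundary terms vanish by compact support) yields $\int f \cdot \half\Delta\mu_t = \int (\half\Delta f)\mu_t$, so $\msf L f = \half\Delta f$. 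Alternatively, and more directly, one applies It\^o's lemma (Lemma~\ref{lem:ito}) to $f(\vx_t)$: $\dd f(\vx_t) = \half\Delta f(\vx_t)\,\dd t + \inprod{\nabla f(\vx_t)}{\dd \vB_t}$, whence $\msf L f = \half\Delta f$, manifestly time-invariant. For the carr\'e du champ, I plug $\msf L f = \half\Delta f$ into the algebraic identity \eqref{eq:cdc}, $\msf\Gamma(f,g) = \half(\msf L(fg) - f\msf L g - g\msf L f)$, and use the product rule $\Delta(fg) = f\Delta g + g\Delta f + 2\inprod{\nabla f}{\nabla g}$ to get $\msf\Gamma(f,g) = \half\inprod{\nabla f}{\nabla g}$.

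For part (ii), the backwards SDE \eqref{eq:backward-heat} has drift $\vm_t(\vx) = \nabla\log\pi\sfQ_{\eta-t}(\vx)$ and the same unit diffusion coefficient, so Lemma~\ref{lem:fokker_planck} immediately gives
\[\partial_t \backmu_t(\vx) = -\nabla\cdot\Par{\nabla\log\pi\sfQ_{\eta-t}(\vx)\,\backmu_t(\vx)} + \half\Delta\backmu_t(\vx).\]
For the generator, It\^o's lemma applied to $f(\backvx_t)$ gives the drift part $\inprod{\nabla f(\backvx_t)}{\nabla\log\pi\sfQ_{\eta-t}(\backvx_t)}\,\dd t$ plus the $\half\Delta f(\backvx_t)\,\dd t$ from the diffusion term, so $\msf L_t^{\leftarrow}f = \half\Delta f + \inprod{\nabla\log\pi\sfQ_{\eta-t}}{\nabla f}$, which is time-dependent through the convolution argument. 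The carr\'e du champ only sees the diffusion coefficient: plugging $\msf L_t^{\leftarrow}$ into \eqref{eq:cdc}, the first-order terms $\inprod{\nabla\log\pi\sfQ_{\eta-t}}{\nabla(fg)} - f\inprod{\nabla\log\pi\sfQ_{\eta-t}}{\nabla g} - g\inprod{\nabla\log\pi\sfQ_{\eta-t}}{\nabla f}$ cancel identically by the product rule, leaving only the Laplacian contribution, so $\msf\Gamma_t(f,g) = \half\inprod{\nabla f}{\nabla g}$, time-invariant.

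There is really no serious obstacle here — the computation is entirely mechanical — but the one point requiring a word of care is the well-posedness/regularity needed to justify differentiating under the integral sign and discarding boundary terms: $\pi\sfQ_{\eta-t}$ is a Gaussian convolution, hence smooth and strictly positive for $\eta - t > 0$, so $\nabla\log\pi\sfQ_{\eta-t}$ is smooth, and on the class $\calF(\R^d)$ of compactly supported smooth test functions all the integration-by-parts steps are valid. I would remark that at $t = \eta$ the drift degenerates (one is reading off the score of $\pi$ itself, which may be singular), but since the statement only asserts the form of the generator and carr\'e du champ for $t \in [0,\eta)$ in the regime where the convolution is nondegenerate, this causes no issue; I would simply note this in passing rather than belabor it.
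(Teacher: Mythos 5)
Your proof is correct and follows exactly the route the paper sketches in the sentence preceding the lemma: Fokker--Planck (Lemma~\ref{lem:fokker_planck}) for the PDEs, It\^o's lemma (or equivalently integration by parts against test functions) for the generators, and the algebraic identity \eqref{eq:cdc} together with the Leibniz rule to see the first-order drift terms cancel in the carr\'e du champ, leaving only the $\half\inprod{\nabla f}{\nabla g}$ from the unit diffusion coefficient. Your closing remark on regularity of $\pi\sfQ_{\eta-t}$ for $t<\eta$ is a sensible caveat and does not change the substance.
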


We also require a technical lemma about the time change in an $f$-divergence between two densities going simultaneous evolution. We defer a proof to the excellent reference \cite{Chewi25}.

\begin{lemma}[{Simultaneous heat flow; adapted from~\cite[Theorem 8.3.1]{Chewi25}}]\label{lem:simultaneous-heat}
    Consider two measures co-evolving according the same dynamics,
    \begin{align*}
        \partial_t \mu_t = (\msf L_t)^* \mu_t\,, \qquad \partial_t \nu_t = (\msf L_t)^* \nu_t.
    \end{align*}
    Then, assuming $(\msf L_t)^*$ satisfies a technical condition\footnote{This is again the diffusion chain rule~\cite[Definition 2.2.13]{Chewi25} discussed in the proof of Proposition~\ref{prop:func_ineq_renorm}, which is true for our applications of interest (and carr\'e du champs of the form $\msf \Gamma(f,f) = c \norm{\nabla f}_2^2$).} and $\msf \Gamma_t$ is its associated carr\'e du champ operator, we have for an $f$-divergence $\mc D_f(\mu \mmid \nu) \coloneqq \int f(\frac{\mu}{\nu}) \, \D \nu$,
    \begin{align*}
        \partial_t \mc D_f(\mu_t \mmid \nu_t) = - \int f''\Bigl(\frac{\mu_t}{\nu_t} \Bigr) \msf \Gamma_t\Bigl( \frac{\mu_t}{\nu_t}, \frac{\mu_t}{\nu_t}\Bigr)\, \D \nu_t\,.
    \end{align*}
\end{lemma}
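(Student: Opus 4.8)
The plan is to reduce the statement to a purely algebraic manipulation built on the properties of $\msf L_t$ and its carr\'e du champ $\msf \Gamma_t$. Write $\rho_t \defeq \mu_t/\nu_t$ for the pointwise density ratio, so that $\mc D_f(\mu_t \mmid \nu_t) = \int f(\rho_t)\,\D\nu_t$. First I would differentiate under the integral sign (legitimate under the standing regularity and decay assumptions, which also kill all boundary terms in the integrations by parts below). By the product rule, $\partial_t\bigl(f(\rho_t)\nu_t\bigr) = f'(\rho_t)(\partial_t\rho_t)\nu_t + f(\rho_t)\,\partial_t\nu_t$, and since $(\partial_t\rho_t)\nu_t = \partial_t\mu_t - \rho_t\,\partial_t\nu_t$, reorganizing gives
\[
\partial_t \mc D_f(\mu_t \mmid \nu_t) = \int f'(\rho_t)\,\partial_t\mu_t \,\D\vx + \int \bigl(f(\rho_t) - \rho_t f'(\rho_t)\bigr)\,\partial_t\nu_t\,\D\vx.
\]

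Next I would plug in the two Fokker--Planck equations $\partial_t\mu_t = (\msf L_t)^*\mu_t$ and $\partial_t\nu_t = (\msf L_t)^*\nu_t$, transfer the adjoint generator onto the test functions via $\int g\,(\msf L_t)^* h\,\D\vx = \int (\msf L_t g)\,h\,\D\vx$, and rewrite $\D\mu_t = \rho_t\,\D\nu_t$; using linearity of $\msf L_t$, this produces
\[
\partial_t \mc D_f(\mu_t \mmid \nu_t) = \int \Bigl( \rho_t\,\msf L_t\bigl(f'(\rho_t)\bigr) + \msf L_t\bigl(f(\rho_t)\bigr) - \msf L_t\bigl(\rho_t\, f'(\rho_t)\bigr) \Bigr)\,\D\nu_t.
\]
The core of the argument is to show the integrand equals $-f''(\rho_t)\,\msf\Gamma_t(\rho_t,\rho_t)$. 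For this I would apply the defining identity of the carr\'e du champ, $\msf L_t(uv) = u\,\msf L_t v + v\,\msf L_t u + 2\,\msf\Gamma_t(u,v)$, with $u = \rho_t$ and $v = f'(\rho_t)$, so that $\rho_t\,\msf L_t(f'(\rho_t)) - \msf L_t(\rho_t f'(\rho_t)) = -f'(\rho_t)\,\msf L_t\rho_t - 2\,\msf\Gamma_t(\rho_t, f'(\rho_t))$; then I would invoke the \emph{diffusion chain rule} (the technical condition in the hypothesis), namely $\msf L_t(\phi(\rho_t)) = \phi'(\rho_t)\,\msf L_t\rho_t + \phi''(\rho_t)\,\msf\Gamma_t(\rho_t,\rho_t)$ and $\msf\Gamma_t(\phi(\rho_t),\rho_t) = \phi'(\rho_t)\,\msf\Gamma_t(\rho_t,\rho_t)$, to rewrite $\msf L_t(f(\rho_t)) = f'(\rho_t)\,\msf L_t\rho_t + f''(\rho_t)\,\msf\Gamma_t(\rho_t,\rho_t)$ and $\msf\Gamma_t(\rho_t, f'(\rho_t)) = f''(\rho_t)\,\msf\Gamma_t(\rho_t,\rho_t)$. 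Summing the three terms, the first-order contributions $\pm f'(\rho_t)\,\msf L_t\rho_t$ cancel and the $f''(\rho_t)\,\msf\Gamma_t(\rho_t,\rho_t)$ terms combine to $-f''(\rho_t)\,\msf\Gamma_t(\rho_t,\rho_t)$, which upon integrating against $\nu_t$ yields the claim.

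The step I expect to be the genuine obstacle is not the algebra above (which is routine once the diffusion chain rule is granted) but the analytic bookkeeping in the first step: justifying the interchange of $\partial_t$ and $\int$, and transferring the unbounded operator $(\msf L_t)^*$ off $\mu_t$ and $\nu_t$ without boundary terms, requires sufficient smoothness and integrability of $\mu_t$, $\nu_t$, $\rho_t$ and their derivatives. For the use case of this appendix the relevant evolution is the heat semigroup, where $\mu_t = \mu_0\,\msf Q_t$ and $\nu_t = \nu_0\,\msf Q_t$ are smooth with rapidly decaying tails, so the justification is standard; in full generality one appeals to \cite{Chewi25}. As a cross-check one can verify the identity by hand in the two cases actually needed downstream, $f(x) = x\log x$ (yielding $\partial_t \KL{\mu_t}{\nu_t} = -\thalf\int \frac{\norm{\nabla\rho_t}_2^2}{\rho_t}\,\D\nu_t$, the relative Fisher information) and $f(x) = (x-1)^2$ (yielding the analogous $\chi^2$-dissipation), by directly expanding the Laplacian terms for the heat flow and integrating by parts.
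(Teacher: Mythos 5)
Your proof is correct and is the standard argument (the one in the cited reference \cite[Theorem 8.3.1]{Chewi25}); the paper itself does not reprove this lemma but simply defers to that reference. The algebra checks out: the decomposition $\partial_t \mc D_f = \int \rho_t\,\msf L_t(f'(\rho_t))\,\D\nu_t + \int \msf L_t(f(\rho_t))\,\D\nu_t - \int \msf L_t(\rho_t f'(\rho_t))\,\D\nu_t$, followed by the carr\'e du champ identity and the diffusion chain rule, correctly cancels the $f'(\rho_t)\,\msf L_t\rho_t$ terms and leaves $-f''(\rho_t)\,\msf\Gamma_t(\rho_t,\rho_t)$, and your sanity check against the relative Fisher information used in Theorem~\ref{thm:heat-flow-cvg} is consistent.
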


Finally, we require an ancilliary lemma on convolving strongly log-concave functions.

\begin{lemma}\label{lem:slc_convolve}
Let $\mu: \R^d \to \R_{\ge 0}$ be $\alpha$-strongly log-concave and let $\nu: \R^d \to \R_{\ge 0}$ be $\beta$-strongly log-concave, for some $\alpha, \beta > 0$. Then their convolution $\mu \ast \nu$ is $\frac {\alpha\beta}{\alpha+\beta}$-strongly log-concave.
\end{lemma}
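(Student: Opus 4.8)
The plan is to prove the Pr\'ekopa--Leindler-type statement about convolution of strongly log-concave densities by a direct Hessian computation on the log-density of the convolution. Write $\mu \propto \exp(-U)$ with $\nabla^2 U \succeq \alpha \id_d$ and $\nu \propto \exp(-W)$ with $\nabla^2 W \succeq \beta \id_d$, so that $(\mu \ast \nu)(\vx) \propto \int \exp(-U(\vx - \vz) - W(\vz))\,\dd\vz$. Fix $\vx$ and let $\vz^\star = \vz^\star(\vx)$ be the (unique, by strong convexity) minimizer of $\vz \mapsto U(\vx - \vz) + W(\vz)$; then by translating one can recognize $-\log(\mu \ast \nu)$ as an infimal convolution-type object up to the Gaussian-like fluctuation integral, and the cleanest route is to directly differentiate.

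First I would set $F(\vx) \defeq -\log \int \exp(-U(\vx - \vz) - W(\vz))\,\dd\vz$ and compute $\nabla F$ and $\nabla^2 F$ by differentiating under the integral sign. Writing $p_\vx(\vz) \propto \exp(-U(\vx - \vz) - W(\vz))$ for the ``tilted'' probability density in the $\vz$ variable, one gets $\nabla F(\vx) = \E_{\vz \sim p_\vx}[\nabla U(\vx - \vz)]$ and then
\[
\nabla^2 F(\vx) = \E_{p_\vx}\Brack{\nabla^2 U(\vx - \vz)} - \Cov_{p_\vx}\Par{\nabla U(\vx - \vz)}.
\]
The first term is $\succeq \alpha \id_d$ by assumption, but I need to control the (negative) covariance term. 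The key identity is that $p_\vx$ has log-density with Hessian (in $\vz$) equal to $\nabla^2 U(\vx - \vz) + \nabla^2 W(\vz) \succeq (\alpha + \beta)\id_d$, so $p_\vx$ is $(\alpha+\beta)$-strongly log-concave, hence by the Brascamp--Lieb inequality (already invoked elsewhere in the paper, cf.\ the proof of Theorem~\ref{thm:ce-bound}) $\Cov_{p_\vx}(\vg) \preceq \frac{1}{\alpha+\beta}\E_{p_\vx}[\nabla \vg\, (\nabla^2 \log p_\vx)^{-1} \nabla \vg^\top]$ for a smooth map $\vg$; applied with $\vg(\vz) = \nabla U(\vx - \vz)$ (whose Jacobian in $\vz$ is $-\nabla^2 U(\vx - \vz)$) this gives
\[
\Cov_{p_\vx}\Par{\nabla U(\vx - \vz)} \preceq \E_{p_\vx}\Brack{\nabla^2 U\, \Par{\nabla^2 U + \nabla^2 W}^{-1} \nabla^2 U},
\]
all evaluated at $(\vx - \vz, \vz)$. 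Combining, $\nabla^2 F(\vx) \succeq \E_{p_\vx}[\nabla^2 U - \nabla^2 U(\nabla^2 U + \nabla^2 W)^{-1}\nabla^2 U]$, and the matrix identity $A - A(A+B)^{-1}A = A(A+B)^{-1}B = (A^{-1} + B^{-1})^{-1}$ for positive definite $A, B$ shows the integrand is $\succeq (\alpha^{-1} + \beta^{-1})^{-1}\id_d = \frac{\alpha\beta}{\alpha+\beta}\id_d$, uniformly in $\vz$, which gives the claim since normalization constants do not affect the Hessian of the log-density.

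Alternatively, and perhaps more robustly for an appendix lemma, I would avoid the Brascamp--Lieb step entirely and argue via a change of variables: the map $(\vx, \vz) \mapsto (\vx, \vx - \vz)$ lets one write the convolution as a marginal, and strong log-concavity of a density is preserved under linear marginalization of a jointly strongly log-concave density (a standard fact, provable by the Brascamp--Lieb inequality or by Pr\'ekopa's theorem). Concretely, the joint density $(\vx, \vz) \mapsto \exp(-U(\vx - \vz) - W(\vz))$ has Hessian $\begin{pmatrix} \nabla^2 U & -\nabla^2 U \\ -\nabla^2 U & \nabla^2 U + \nabla^2 W\end{pmatrix}$, whose Schur complement onto the $\vx$-block is exactly $\nabla^2 U - \nabla^2 U(\nabla^2 U + \nabla^2 W)^{-1}\nabla^2 U \succeq \frac{\alpha\beta}{\alpha+\beta}\id_d$; marginalizing out $\vz$ then yields a density whose log-Hessian is bounded below by this Schur complement. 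The main obstacle is purely expository: deciding how much of the ``marginals of log-concave are log-concave with the Schur-complement constant'' machinery to cite versus reprove, and handling the mild regularity (twice differentiability, finiteness of the convolution integral, differentiation under the integral sign) that is implicitly needed; none of this is deep, so I would state the Schur-complement computation explicitly and cite Pr\'ekopa/Brascamp--Lieb for the marginalization principle.
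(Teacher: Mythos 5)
Your proposal is correct, and both routes you sketch would work, but they differ from the paper's argument in an instructive way. The paper's proof avoids any quantitative Brascamp--Lieb step: it defines the auxiliary joint function $f(\vx,\vy) = \mu(\vy)\nu(\vx-\vy)\exp(\frac{\gamma}{2}\norm{\vx}_2^2)$ with $\gamma = \frac{\alpha\beta}{\alpha+\beta}$, lower-bounds the $2d\times 2d$ Hessian of $-\log f$ by the structured block matrix $\begin{pmatrix}(\beta-\gamma)\id_d & -\beta \id_d \\ -\beta \id_d & (\alpha+\beta)\id_d\end{pmatrix}$, checks positive semidefiniteness by a scalar $2\times 2$ determinant, and then invokes only the \emph{qualitative} Pr\'ekopa--Leindler theorem (marginals of log-concave are log-concave) to marginalize out $\vy$. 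Your first route computes $\nabla^2 F$ directly via differentiation under the integral and then bounds the covariance term via Brascamp--Lieb; your second route is the quantitative ``marginals inherit the Schur-complement Hessian lower bound'' lemma, which is essentially the same computation, just packaged differently. Both of yours give the exact Hessian expression $\E_{p_\vx}[\nabla^2 U - \nabla^2 U(\nabla^2 U + \nabla^2 W)^{-1}\nabla^2 U]$ and are therefore somewhat more informative than the paper's bound, at the cost of invoking the (sharper) Brascamp--Lieb machinery and some regularity (differentiation under the integral sign). One small imprecision to flag: you write that the quantitative marginalization lemma is ``provable by \ldots Pr\'ekopa's theorem,'' but Pr\'ekopa alone only gives qualitative log-concavity of the marginal; recovering the Schur-complement constant from Pr\'ekopa requires exactly the Gaussian-tilting reduction the paper performs, so if you wanted to cite Pr\'ekopa rather than Brascamp--Lieb you would need to re-introduce that trick rather than cite it as black-box. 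All your matrix manipulations ($A - A(A+B)^{-1}A = A(A+B)^{-1}B = (A^{-1}+B^{-1})^{-1}$ for symmetric positive definite $A,B$, and the monotonicity of the parallel sum) are correct.
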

\begin{proof}
By definition, $\gamma$-strong log-concavity of a function $f: \R^d \to \R_{\ge 0}$ is equivalent to $f \exp(\frac \gamma 2 \norm{\cdot}_2^2)$ being log-concave. Now, define the function
\[f(\vx, \vy) \defeq \mu(\vy) \nu(\vx - \vy) \exp\Par{\frac \gamma 2 \norm{\vx}_2^2}.\]
We claim that $f$ is log-concave if $\gamma \le \frac{\alpha\beta}{\alpha + \beta}$. This holds because
\begin{gather*}-\nabla^2 \log \mu(\vy) - \nabla^2 \log \nu(\vx - \vy) - \nabla^2\Par{\frac{\gamma}{2}\norm{\vx}_2^2} \\
\succeq \begin{pmatrix} \0_{d \times d} & \0_{d \times d} \\ \0_{d \times d} & \alpha \id_d \end{pmatrix} + \begin{pmatrix} \beta \id_d & -\beta \id_d \\ -\beta \id_d & \beta \id_d \end{pmatrix} - \gamma \begin{pmatrix} \id_d & \0_{d \times d} \\ \0_{d \times d} & \0_{d \times d}\end{pmatrix}
= \begin{pmatrix} (\beta - \gamma) \id_d &-\beta \id_d\\ -\beta \id_d & (\alpha + \beta) \id_d \end{pmatrix},
\end{gather*}
and we can verify that the last matrix is positive semidefinite by checking its determinant. Finally, the conclusion holds because the $\vx$-marginal of $f$ is log-concave by the Pr\'ekopa-Leindler inequality, which simplifies to the convolution of $\mu$ and $\nu$ being $\gamma$-strongly log-concave.
\end{proof}

To interpret Lemma~\ref{lem:slc_convolve}, recall that classical Bakry-\'Emery theory shows that strong log-concavity implies a log-Sobolev inequality \eqref{eq:lsi_def}. Combining this fact with Lemma~\ref{lem:slc_convolve} thus gives a log-Sobolev inequality for densities which result from applying a heat kernel to a strongly log-concave measure. 

Finally, we are ready to state our main result, a direct bound on the contraction of \eqref{eq:rgd} to its stationary distribution in the KL divergence. Note that the estimate established here is in fact slightly stronger than that concluded by Theorem~\ref{thm:ce-bound}. 

\begin{theorem}\label{thm:heat-flow-cvg}
    Let $\pi$ be $\alpha$-strongly log-concave, and consider following the dynamics \eqref{eq:rgd} from $\vx \sim \mu$, where we define $\mu' \defeq \textup{Law}(\vx')$. 
    Then,
    \begin{align*}
        \mathrm{KL}(\mu' \mmid \pi) \leq \frac{\mathrm{KL}(\mu \mmid \pi)}{(1+\alpha \eta)^2}.
    \end{align*}
\end{theorem}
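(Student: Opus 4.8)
The plan is to decompose the RGD step $\mu \to \mu'$ into the two half-steps described after Lemma~\ref{lem:fokker-planck-rgo}: a forward heat flow of time $\eta$ (equation~\eqref{eq:forward-heat}), followed by a backward heat flow of time $\eta$ (equation~\eqref{eq:backward-heat}), and to track $\mathrm{KL}$ against $\pi$ through each. Since $\pi$ is stationary for the whole RGD kernel (it is an instance of localization dynamics by Theorem~\ref{thm:rgd_is_localization}, and Lemma~\ref{lem:pi_stationary} applies), the correct reference measure to compare against during the forward half-step is $\pi\sfQ_t$ (the measure $\pi$ convolved with $\Nor(\0_d,t\id_d)$), which is exactly the law of $\vx_t$ if $\vx_0\sim\pi$; and during the backward half-step the reference is $\pi\sfQ_{\eta-t}$, which flows back to $\pi$ at the end. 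So I will write $\mu_0 = \mu$, let $\mu_t$ evolve by~\eqref{eq:forward-heat}, so that after time $\eta$ the marginal is $\mu_\eta = \mu\sfQ_\eta$; then let $\backmu_0 = \mu_\eta$ evolve by~\eqref{eq:backward-heat}, so that $\backmu_\eta = \mu'$ (this uses the observation, noted in the text and provable as in Theorem~\ref{thm:tilt_posterior}, that the backward half-step implements the posterior-sampling second step of~\eqref{eq:rgd}). The goal is to show $\mathrm{KL}(\mu\sfQ_\eta \mmid \pi\sfQ_\eta) \le \mathrm{KL}(\mu\mmid\pi)/(1+\alpha\eta)^2$ for the forward leg, and $\mathrm{KL}(\mu'\mmid\pi) \le \mathrm{KL}(\mu\sfQ_\eta\mmid\pi\sfQ_\eta)$ for the backward leg; composing gives the claim.

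For the backward leg, apply Lemma~\ref{lem:simultaneous-heat} with the $f$-divergence $f(r)=r\log r$ to $\backmu_t$ and $\pi\sfQ_{\eta-t}$ (which co-evolve under the same time-inhomogeneous generator $\msf L_t^\leftarrow$ by Lemma~\ref{lem:fokker-planck-rgo}(ii), since $\pi\sfQ_\eta$ flowed backward along~\eqref{eq:backward-heat} returns $\pi\sfQ_{\eta-t}$): this gives $\partial_t \mathrm{KL}(\backmu_t \mmid \pi\sfQ_{\eta-t}) = -\frac12\int \lVert\nabla\log(\backmu_t/\pi\sfQ_{\eta-t})\rVert_2^2\,\backmu_t \le 0$, so $\mathrm{KL}$ only decreases; evaluating at $t=0$ and $t=\eta$ gives $\mathrm{KL}(\mu'\mmid\pi)\le\mathrm{KL}(\mu\sfQ_\eta\mmid\pi\sfQ_\eta)$. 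For the forward leg, apply Lemma~\ref{lem:simultaneous-heat} again, now to $\mu_t$ and $\pi\sfQ_t$ co-evolving under the heat semigroup generator $\msf L = \frac12\Delta$ (Lemma~\ref{lem:fokker-planck-rgo}(i)): $\partial_t \mathrm{KL}(\mu_t\mmid\pi\sfQ_t) = -\frac12\,\mathrm{FI}(\mu_t\mmid\pi\sfQ_t)$, where $\mathrm{FI}(\mu\mmid\nu)\defeq\int\lVert\nabla\log(\mu/\nu)\rVert_2^2\,\dd\mu$. Now $\pi\sfQ_t$ is the convolution of the $\alpha$-strongly log-concave $\pi$ with $\Nor(\0_d,t\id_d)$, which is $\frac1t$-strongly log-concave, so by Lemma~\ref{lem:slc_convolve} it is $\frac{\alpha/t}{\alpha+1/t} = \frac{\alpha}{1+\alpha t}$-strongly log-concave; by Bakry--\'Emery (strong log-concavity implies LSI with the same constant, cf.\ Corollary~\ref{cor:renorm-constants} / Proposition~\ref{prop:renorm-bakry-emery} specialized, or just the classical statement), $\pi\sfQ_t$ satisfies a log-Sobolev inequality with constant $\frac{\alpha}{1+\alpha t}$, equivalently $\mathrm{FI}(\mu_t\mmid\pi\sfQ_t) \ge \frac{2\alpha}{1+\alpha t}\mathrm{KL}(\mu_t\mmid\pi\sfQ_t)$. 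Plugging in yields the differential inequality $\partial_t \mathrm{KL}(\mu_t\mmid\pi\sfQ_t) \le -\frac{\alpha}{1+\alpha t}\mathrm{KL}(\mu_t\mmid\pi\sfQ_t)$, and Grönwall gives $\mathrm{KL}(\mu_\eta\mmid\pi\sfQ_\eta) \le \exp(-\int_0^\eta \frac{\alpha}{1+\alpha t}\dd t)\mathrm{KL}(\mu\mmid\pi) = \frac{1}{1+\alpha\eta}\mathrm{KL}(\mu\mmid\pi)$.

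That only produces a factor $\frac{1}{1+\alpha\eta}$, not $\frac{1}{(1+\alpha\eta)^2}$, so the backward leg must also contribute a contraction of $\frac{1}{1+\alpha\eta}$, and this is the main obstacle: I need the strict decay rate in $\partial_t\mathrm{KL}(\backmu_t\mmid\pi\sfQ_{\eta-t})$, not just monotonicity. The fix is to use a log-Sobolev inequality along the backward flow as well. The reference measure at backward-time $t$ is $\pi\sfQ_{\eta-t}$, which is $\frac{\alpha}{1+\alpha(\eta-t)}$-strongly log-concave by the same convolution computation, hence satisfies LSI with that constant; combining with $\partial_t\mathrm{KL}(\backmu_t\mmid\pi\sfQ_{\eta-t}) = -\frac12\mathrm{FI}(\backmu_t\mmid\pi\sfQ_{\eta-t}) \le -\frac{\alpha}{1+\alpha(\eta-t)}\mathrm{KL}(\backmu_t\mmid\pi\sfQ_{\eta-t})$ and Grönwall over $[0,\eta]$ gives a factor $\exp(-\int_0^\eta\frac{\alpha}{1+\alpha(\eta-t)}\dd t) = \exp(-\int_0^\eta\frac{\alpha}{1+\alpha s}\dd s) = \frac{1}{1+\alpha\eta}$. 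Multiplying the two legs gives exactly $\mathrm{KL}(\mu'\mmid\pi)\le\frac{1}{(1+\alpha\eta)^2}\mathrm{KL}(\mu\mmid\pi)$. The only subtlety to be careful about is verifying that Lemma~\ref{lem:simultaneous-heat}'s technical (diffusion chain rule) hypothesis holds — it does here since both generators have carré du champ of the form $\frac12\lVert\nabla\cdot\rVert_2^2$ — and that $\mathrm{KL}(\mu_\eta\mmid\pi\sfQ_\eta)=\mathrm{KL}(\mu\sfQ_\eta\mmid\pi\sfQ_\eta)$ is the correct initial value for the backward leg, which holds because $\backmu_0 = \mu_\eta = \mu\sfQ_\eta$ and the backward reference at $t=0$ is $\pi\sfQ_\eta$ by construction.
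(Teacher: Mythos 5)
Your proof is correct and follows essentially the same route as the paper's: decompose one step of \eqref{eq:rgd} into forward heat flow then backward heat flow, track $\mathrm{KL}$ against the co-evolving reference $\pi\sfQ_t$ (resp.\ $\pi\sfQ_{\eta-t}$) using Lemma~\ref{lem:simultaneous-heat}, lower bound the relative Fisher information via the log-Sobolev inequality that Lemma~\ref{lem:slc_convolve} gives for the convolved measures, and integrate by Gr\"onwall to obtain a factor $\frac{1}{1+\alpha\eta}$ on each leg. The ``obstacle'' you flag mid-proof is resolved exactly as the paper does it; your self-correction lands on the paper's argument.
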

\begin{proof}
As remarked in our discussion following \eqref{eq:forward-heat}, \eqref{eq:backward-heat}, an equivalent way to simulate \eqref{eq:rgd} is by running \eqref{eq:forward-heat} from $\vx_0 \gets \vx$, and then running \eqref{eq:backward-heat} from $\backvx_0 \gets \vx_\eta$, where we output $\vx' \gets \backvx_0$.

    In the sequel, let $\pi \msf Q_t$ denote the law of $\vx_t$ if $\vx_0 \sim \pi$; likewise, it is the law of $\mathbf x_{\eta-t}^\leftarrow$ if $\mbf x_0^\leftarrow \sim \pi \msf Q_\eta$, which follows from the time reversal property. We also define $\{\mu_t\}_{t \in [0, \eta]}$, $\{\backmu_t\}_{t \in [0, \eta]}$ as in Lemma~\ref{lem:fokker-planck-rgo}, where $\mu_0 \defeq \mu$. We consider the forward and backward heat flows separately.
    \paragraph{Forward heat:}
    Applying Lemma~\ref{lem:simultaneous-heat} to the $\mathrm{KL}$ divergence, which is an $f$-divergence with $f(x) = x \log x$, and with the carr\'e du champ $\msf \Gamma_t(f, f) = \half \norm{\nabla f}_2^2$ from Lemma~\ref{lem:fokker-planck-rgo}(i), we have
    \begin{align*}
        \partial_t \mathrm{KL}(\mu_t \mmid \pi \msf Q_t) = -\frac{1}{2} \int \frac{\pi \msf Q_t}{\mu_t} \norm{\nabla \frac{\mu_t}{\pi \msf Q_t}}_2^2 \, \D \pi \msf Q_t \geq - \frac{\alpha}{1+\alpha t} \mathrm{KL}(\mu_t \mmid \pi \msf Q_t),   
    \end{align*}
    where the inequality follows from Lemma~\ref{lem:slc_convolve} and the fact that a $\gamma$-strongly log-concave measure satisfies a log-Sobolev inequality \eqref{eq:lsi_def} with constant $\gamma$. This implies via Gronwall's inequality that
    \begin{align}\label{eq:forward-decay}
        \mathrm{KL}(\mu_\eta \mmid \pi \msf Q_\eta) \leq \exp\Bigl(-\int_0^\eta \frac{\alpha}{1+\alpha t} \, \D t \Bigr) \mathrm{KL}(\mu_0 \mmid \pi) = \frac{\mathrm{KL}(\mu_0 \mmid \pi)}{1+\alpha \eta} .
    \end{align}

    \paragraph{Backward heat:}
    Applying Lemma~\ref{lem:simultaneous-heat} and Lemma~\ref{lem:fokker-planck-rgo}(ii), we have completely analogously that
    \begin{align*}
        \partial_t \mathrm{KL}(\mu_t^\leftarrow \mmid \pi \msf Q_{\eta-t}) = \frac{1}{2} \int \frac{\pi \msf Q_{\eta - t}}{\backmu_t} \norm{\nabla \frac{\backmu_t}{\pi \msf Q_{\eta - t}}}_2^2 \, \D \pi \msf Q_{\eta - t}\geq -\frac{\alpha}{1+\alpha (\eta-t)} \mathrm{KL}(\mu_0^\leftarrow \mmid \pi \msf Q_{\eta-t}),
    \end{align*}
    and as a result, Gronwall's inequality yields
    \begin{align}\label{eq:backward-decay}
        \partial_t \mathrm{KL}(\mu_\eta^\leftarrow \mmid \pi) \leq \exp\Bigl(-\int_0^\eta \frac{\alpha}{1+\alpha(\eta-t)} \, \D t \Bigr) \mathrm{KL}(\mu_0^\leftarrow \mmid \pi \msf Q_\eta) = \frac{\mathrm{KL}(\mu_0^\leftarrow \mmid \pi \msf Q_\eta)}{1+\alpha \eta}.
    \end{align}

    Finally, combining~\eqref{eq:forward-decay} and~\eqref{eq:backward-decay} completes the proof.
\end{proof}

Lastly, we note that Theorem~\ref{thm:heat-flow-cvg} can be strengthened in a few ways. A fortiori, Lemma~\ref{lem:simultaneous-heat} allows us to write a similar result for R\'enyi divergences, and direct use of functional inequalities allows extensions beyond the strong log-concavity. We defer more discussion to~\cite[Chapter 8]{Chewi25}.

\end{document}